\title[Einstein metrics on $S^{10}$]{Einstein metrics on the ten-sphere}
\author[Nienhaus and Wink]{Jan Nienhaus and Matthias Wink}
\address{Department of Mathematics, UCLA, 520 Portola Plaza, Los Angeles, CA, 90095}
\email{nienhaus@math.ucla.edu}
\address{Dpartment of Mathematics, University of California, Santa Barbara, South Hall, Room 6607, Santa Barbara, CA 93106, USA}
\email{wink@math.ucsb.edu}
\keywords{Einstein metrics, spheres, cohomogeneity one}
\subjclass[2020]{53C25 (53C15, 53C20)}
\begin{document}
\newcommand{\Ext}{\bigwedge\nolimits}
\newcommand{\Div}{\operatorname{div}}
\newcommand{\Hol} {\operatorname{Hol}}
\newcommand{\diam} {\operatorname{diam}}
\newcommand{\Scal} {\operatorname{Scal}}
\newcommand{\scal} {\operatorname{scal}}
\newcommand{\Ric} {\operatorname{Ric}}
\newcommand{\Hess} {\operatorname{Hess}}
\newcommand{\grad} {\operatorname{grad}}
\newcommand{\Sect} {\operatorname{Sect}}
\newcommand{\Rm} {\operatorname{Rm}}
\newcommand{ \Rmzero } {\mathring{\Rm}}
\newcommand{\Rc} {\operatorname{Rc}}
\newcommand{\Curv} {S_{B}^{2}\left( \mathfrak{so}(n) \right) }
\newcommand{ \tr } {\operatorname{tr}}
\newcommand{ \id } {\operatorname{id}}
\newcommand{ \Riczero } {\mathring{\Ric}}
\newcommand{ \ad } {\operatorname{ad}}
\newcommand{ \Ad } {\operatorname{Ad}}
\newcommand{ \dist } {\operatorname{dist}}
\newcommand{ \rank } {\operatorname{rank}}
\newcommand{\Vol}{\operatorname{Vol}}
\newcommand{\dVol}{\operatorname{dVol}}
\newcommand{ \zitieren }[1]{ \hspace{-3mm} \cite{#1}}
\newcommand{ \pr }{\operatorname{pr}}
\newcommand{\diag}{\operatorname{diag}}
\newcommand{\Lagr}{\mathcal{L}}
\newcommand{\av}{\operatorname{av}}
\newcommand{ \floor }[1]{ \lfloor #1 \rfloor }
\newcommand{ \ceil }[1]{ \lceil #1 \rceil }
\newcommand{\Sym} {\operatorname{Sym}}
\newcommand{\bcirc}{ \ \bar{\circ} \ }
\newcommand{\conj}[1]{ \overline{ #1 } }
\newcommand{\sign}[1]{\operatorname{sign}(#1)}
\newcommand{\cone}{\operatorname{cone}}
\newcommand{\pbd}{\varphi_{bar}^{\delta}}

\newtheorem{theorem}{Theorem}[section]
\newtheorem{definition}[theorem]{Definition}
\newtheorem{example}[theorem]{Example}
\newtheorem{remark}[theorem]{Remark}
\newtheorem{lemma}[theorem]{Lemma}
\newtheorem{proposition}[theorem]{Proposition}
\newtheorem{corollary}[theorem]{Corollary}
\newtheorem{assumption}[theorem]{Assumption}
\newtheorem{acknowledgment}[theorem]{Acknowledgment}
\newtheorem{DefAndLemma}[theorem]{Definition and lemma}

\newcommand{\R}{\mathbb{R}}
\newcommand{\N}{\mathbb{N}}
\newcommand{\Z}{\mathbb{Z}}
\newcommand{\Q}{\mathbb{Q}}
\newcommand{\C}{\mathbb{C}}
\newcommand{\F}{\mathbb{F}}
\newcommand{\X}{\mathcal{X}}
\newcommand{\D}{\mathcal{D}}
\newcommand{\Cont}{\mathcal{C}}

\renewcommand{\labelenumi}{(\alph{enumi})}
\newtheorem{maintheorem}{Theorem}[]
\renewcommand*{\themaintheorem}{\Alph{maintheorem}}
\newtheorem*{theorem*}{Theorem}
\newtheorem*{corollary*}{Corollary}
\newtheorem*{remark*}{Remark}
\newtheorem*{example*}{Example}
\newtheorem*{question*}{Question}
\newtheorem*{definition*}{Definition}
\newtheorem{conjecture}[maintheorem]{Conjecture}
\newtheorem*{conjecture*}{Conjecture}

\begin{abstract}
We prove the existence of three non-round, non-isometric Einstein metrics with positive scalar curvature on the sphere $S^{10}.$ Previously, the only even-dimensional spheres known to admit non-round Einstein metrics were $S^6$ and $S^8.$ 
\end{abstract}

\maketitle

\section*{Introduction}

A Riemannian manifold $(M,g)$ is called an Einstein manifold if its Ricci tensor satisfies $\Ric (g) = \lambda g$ for some $\lambda \in \R.$ 

The first non-round Einstein metrics on spheres were discovered in the 1970s by Jensen \cite{JensenEinsteinMetricsOnPrincipalFibreBundles} and Bourguignon-Karcher \cite{BourguignonKarcherCurvOperatorsPinchingEstimates} on $S^{4m+3},$ for $m\ge 1$, and $S^{15}$, respectively. These metrics carry transitive group actions by $Sp(m+1)$ resp. $\operatorname{Spin}(9)$ and, in fact, were shown in 1982 by Ziller \cite{ZillerHomogeneousEinsteinMetrics} to be all non-round homogeneous Einstein metrics on spheres. In particular, all homogeneous Einstein metrics on spheres of even dimension are round.

This led to the question of whether \emph{all} non-round Einstein metrics on spheres occur in odd dimensions, which was settled in the negative by B\"ohm in \cite{BohmInhomEinstein}, who constructed infinite (discrete) families of Einstein metrics on all spheres of dimension $n \in \{5, \ldots, 9 \}$, in particular on $S^6$ and $S^8$.

The only new Einstein metrics constructed on even-dimensional spheres since then are a nearly K\"ahler metric on $S^6$ due to Foscolo-Haskins \cite{FoscoloHaskinsNearlyKaehler} and a metric on $S^8$ due to Chi \cite{ChiPositiveEinsteinMetrics}. 
In particular, up until now, $S^6$ and $S^8$ are still the only even-dimensional spheres known to admit non-round Einstein metrics.

The main result of this paper is the addition of $S^{10}$ to the list of even-dimensional spheres admitting non-round Einstein metrics:

\begin{maintheorem}
\label{EinsteinMetricsOnSpheresMainTheorem}
The ten-dimensional sphere $S^{10}$ admits three non-round, non-isometric Einstein metrics of positive scalar curvature. 
\end{maintheorem}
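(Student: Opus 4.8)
The plan is to realize $S^{10}$ as a cohomogeneity one manifold on which invariant metrics carry at least three independent warping parameters, and to solve the resulting cohomogeneity one Einstein equations by a dynamical systems argument in the spirit of B\"ohm \cite{BohmInhomEinstein}. By Hsiang--Lawson a cohomogeneity one action on $S^{10}\subset\R^{11}$ is orbit equivalent to the isotropy representation of an $11$-dimensional rank-two symmetric space; since no such space is irreducible, one is forced onto products $P_1\times P_2$ of rank-one spaces, and the presentations whose principal orbit is \emph{not} a product of two round spheres --- that is, the ones going beyond B\"ohm's already-exhausted doubly warped ansatz --- are $\mathbb{CP}^k\times S^{11-2k}$ for $k=2,3,4$ and $\mathbb{HP}^2\times S^3$. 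In each case a generic orbit is the total space of a Hopf fibration times a round sphere, so on the open dense part of $S^{10}$ an invariant metric has the form
\[
g = dt^2 + f_1(t)^2\, g_{\mathrm{fib}} + f_2(t)^2\, g_{\mathrm{base}} + f_3(t)^2\, g_{\mathrm{rnd}},\qquad t\in(0,L),
\]
the three metrics on the right being fixed; at $t=0$ and $t=L$ prescribed subsets of the $f_i$ must vanish with the first-order behaviour forced by smoothness of $g$ across the singular orbits, which also pins the topology down to $S^{10}$.

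Next I would turn the Einstein condition $\Ric(g)=\Lambda g$ into an ODE system: three second-order equations for the $f_i$ together with a first-order constraint (the $\partial_t$-direction of the Einstein equation, a first integral of the others). Following B\"ohm, one passes to logarithmic derivatives, normalizes the scalar curvature of the cross-sections to be constant --- which pins down $\Lambda>0$ --- and compactifies, obtaining an autonomous flow on a compact region $\mathcal C$ two of whose boundary faces carry the "singular-orbit" invariant sets; smooth Einstein metrics of positive scalar curvature on $S^{10}$ then correspond precisely to flow lines in $\mathcal C$ running from one of these sets to the other.

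Producing such connecting trajectories is the crux. The fixed points of the flow, in the interior and on the faces of $\mathcal C$, are the (products of) homogeneous Einstein metrics on the building blocks --- the round sphere factors, the round and (where it exists) Jensen-type Einstein metrics on the fibred sphere, and any invariant Einstein metric on the principal orbit --- and I would compute their linearizations to read off the relevant stable and unstable manifolds. Then, as in \cite{BohmInhomEinstein}, one runs a shooting argument: follow the parameter family of trajectories issuing from the "$t=0$" set, use monotonicity of B\"ohm's functional (equivalently, of the volume-normalized scalar curvature) as a Lyapunov function to force all $\omega$-limit sets to be fixed points, and conclude by a degree/intermediate-value argument that the "$t=L$" set must be reached; a bookkeeping of the connecting trajectories across the admissible presentations then yields the three solutions. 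I expect this step to be the main obstacle: the phase space is genuinely higher-dimensional than in B\"ohm's doubly warped case, the linear algebra at the fixed points is correspondingly heavier, and one must rule out trajectories that run into $\partial\mathcal C$ --- degenerate or over-collapsed cross-sections --- before they reach the target, which is precisely the mechanism by which the naive doubly warped ansatz fails on $S^{10}$ and why the third warping function is essential. Finally, each solution is non-round since its $f_i$ are non-constant (the round metric is the explicit trigonometric profile, a different trajectory), the three are pairwise non-isometric because distinct trajectories have distinct renormalized volumes or isometry groups, and $\Scal>0$ is built into $\mathcal C$.
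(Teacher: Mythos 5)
Your starting premise is wrong, and it sends the whole proposal down the wrong road. You assert that the doubly warped ansatz (principal orbit $S^{d_1}\times S^{d_2}$ under $SO(d_1+1)\times SO(d_2+1)$) is ``already exhausted'' by B\"ohm on $S^{10}$ and that it ``fails'' there, so that one is forced to triply warped presentations over Hopf fibrations. In fact B\"ohm's results stop at $n=d_1+d_2\le 8$, i.e.\ at $S^9$, because his counting principle hinges on the existence of symmetric solutions through a principal orbit, and those appear not to exist for $d_1+d_2=9$. That is a failure of his \emph{method}, not of the ansatz: the paper constructs its three metrics precisely within the doubly warped ansatz, one for each of $(d_1,d_2)=(2,7),(3,6),(4,5)$, by replacing the counting principle with a new mechanism --- a quantitative rotation-index estimate showing that the slice curves $M_1^+\cap\{H=0\}$ and $\overline{M_2^+}\cap\{H=0\}$ wind far enough around the cone solution (via an explicit barrier for the angle ODE of the linearization, which works exactly when $n=9$) to force a second, non-round intersection point. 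None of this appears in your proposal.

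Even taken on its own terms, your proposal has no proof of existence. The entire crux --- producing heteroclinic trajectories between the two singular-orbit sets --- is described only as ``a shooting argument \dots\ use monotonicity of B\"ohm's functional \dots\ and conclude by a degree/intermediate-value argument,'' and you yourself flag this as ``the main obstacle.'' A Lyapunov function controls $\omega$-limit sets but does not by itself produce connecting orbits; B\"ohm's actual production mechanism (symmetric solutions plus the counting principle) is unavailable in dimension $10$, and you supply no replacement. The claimed count of exactly three solutions rests on a ``bookkeeping'' that is never performed, and the non-isometry argument (``distinct renormalized volumes or isometry groups'') is asserted rather than proved --- the paper instead shows that an isometry between metrics from different $(d_1,d_2)$ would force homogeneity and hence roundness by Ziller's classification. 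So the proposal is not a proof: it misidentifies the correct ansatz and omits the one genuinely new idea that makes the theorem go through.
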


In stark contrast to the even-dimensional case, the existence question has been resolved in all odd dimensions due to recent constructions using the framework of Sasaki-Einstein metrics.

Specifically, in \cite{BoyerGalickiKollarEinsteinMetricsOnSpheres}, Boyer-Galicki-Koll\'{a}r described Sasaki-Einstein metrics on $S^{4m+1}$ for all $m \geq 1$ as well as on $S^7,$ many even admitting continuous Sasaki-Einstein deformations. Sasaki-Einstein metrics on $S^{2m+1}$ for all $m \geq 2$ were constructed by Ghigi-Koll\'{a}r in \cite{GhigiKollarKaehlerEinsteinOrbifolds}. Collins-Sz\'{e}kelyhidi \cite{CollinsSzekelyhidiSasakiEinsteinMetricsKStability} later proved that $S^5$ admits infinitely many families of Sasaki-Einstein metrics and conjectured that the same is true for any odd dimensional sphere of dimension at least five. This was in fact established by Liu-Sano-Tasin in \cite{LiuSanoTasinIninitelySasakiEinsteinMetricsSpheres}. Noting that any Einstein metric in dimension three has constant sectional curvature, these results settle the existence question in all odd dimensions.

\vspace{2mm}

To prove Theorem \ref{EinsteinMetricsOnSpheresMainTheorem}, we look for cohomogeneity one Einstein metrics on $S^{10}$ that are invariant under the standard group actions by $SO(d_1+1)\times SO(d_2+1)$ for $(d_1, d_2)$ with $d_1+d_2=9.$ Noting that the case $(1,8)$ only supports the round metric, see Remark \ref{d1Remark}, we construct one non-round metric for each pair $(d_1, d_2)=$ $(2,7), (3,6)$ and $(4,5)$.

\vspace{2mm}

Based on numerical investigations, we conjecture that the metrics of Theorem \ref{EinsteinMetricsOnSpheresMainTheorem} are all cohomogeneity one Einstein metrics on $S^{10}$. This sets it apart from $S^6$ and $S^8$, where infinitely many such metrics exist. In fact, we conjecture

\begin{conjecture}
Only finitely many even-dimensional spheres admit non-round Einstein metrics of cohomogeneity one.
\end{conjecture}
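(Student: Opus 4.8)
The plan is to combine the classification of cohomogeneity one actions on spheres with a rigidity statement for the associated Einstein ODE that is \emph{uniform in the dimension}. Up to finite covers and the sporadic low-dimensional coincidences, a cohomogeneity one action on $S^n$ that occurs for infinitely many $n$ is a sum action $G_1\times G_2$ on the unit sphere of $V_1\oplus V_2$, where each $G_i$ acts transitively on the unit sphere of $V_i$; for even $n$ at least one factor must be $SO(k)$, and the other is one of the finitely many transitive types ($SO$, $U$, $SU$, $Sp$, $Sp\cdot U(1)$, $Sp\cdot Sp(1)$, $G_2$, $\operatorname{Spin}(7)$, $\operatorname{Spin}(9)$), so the reduced Einstein equation involves a bounded number of warping functions. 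All remaining cohomogeneity one actions on $S^n$ live in a bounded range of dimensions and do not affect the asymptotics. Thus it suffices to show that, for each infinite family of sum actions, the reduced Einstein equation admits only the round metric once $n$ is large; by Remark~\ref{d1Remark} the families with a one-dimensional orbit factor may be dropped. For concreteness I describe the argument for the main family $SO(d_1+1)\times SO(d_2+1)$ on $S^{d_1+d_2+1}$ with $2\le d_1\le d_2$; the others are handled the same way with the extra warping functions.

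Such a metric takes the form $dt^2+f_1(t)^2 g_{S^{d_1}}+f_2(t)^2 g_{S^{d_2}}$ on $(0,T)$ with $f_1(0)=0$, $f_1'(0)=1$, $f_2(T)=0$, $f_2'(T)=-1$, both $f_i>0$ in the interior and smooth at the endpoints. After normalizing the Einstein constant, the Einstein condition becomes a second-order system in $(f_1,f_2)$ subject to a first-order Hamiltonian constraint. Passing to the logarithmic derivatives $f_i'/f_i$ and the shape variables $1/f_i$, rescaling by the mean curvature of the principal orbit, and compactifying --- as in \cite{BohmInhomEinstein} --- turns this into a flow on a compact manifold with corners whose boundary faces $\{f_1=0\}$ and $\{f_2=0\}$ carry the singular-orbit critical points $p_1$ and $p_2$. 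An invariant Einstein metric on $S^n$ is exactly a flow line from $p_1$ to $p_2$, and the round metric is one such heteroclinic; the task is to prove that for $d_1+d_2$ large it is the only one. The same analysis should show that for each admissible $(d_1,d_2)$ there are at most finitely many non-round solutions, which together with the bounded sporadic list gives the finiteness asserted in the conjecture.

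The heart of the matter --- and the step I expect to be the main obstacle --- is a monotonicity estimate for the reduced flow that persists as $d_1+d_2\to\infty$. One would like a functional $\Phi$ on phase space, built from the Hamiltonian constraint together with a Jacobi-field/comparison argument of the kind used to prove oscillation bounds, whose derivative along any $p_1\to p_2$ orbit has a sign that becomes definite once $d_1+d_2$ is large; this would force the orbit to be monotone in an appropriate sense (for instance $f_1/f_2$ strictly increasing, equivalently $f_1$ having no interior critical point) and hence to coincide with the round solution. In detail I would: (i) compute the linearizations at $p_1$ and $p_2$ and control their eigenvalues as $d_1+d_2\to\infty$, checking in particular that the unstable manifold of $p_1$ stays one-dimensional and leaves $p_1$ with controlled slope; (ii) identify $\Phi$ and verify the sign of $\dot\Phi$ in the large-dimension regime; and (iii) treat separately the regime in which one $d_i$ stays bounded while the other tends to infinity, via a singular-perturbation analysis in the small parameter $1/d_2$, where the limiting equation should degenerate to one whose only bounded solution is round.

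The reason this is hard is that every known construction of non-round cohomogeneity one Einstein metrics on spheres --- B\"ohm's \cite{BohmInhomEinstein}, Chi's \cite{ChiPositiveEinsteinMetrics}, and the metrics of Theorem~\ref{EinsteinMetricsOnSpheresMainTheorem} --- proceeds by perturbation, degree, or min--max arguments that produce solutions with no a priori ceiling on the dimension, while the reduced phase space becomes more intricate, with solutions developing boundary layers, precisely as $n$ grows. A genuinely new ingredient appears to be needed: either a conserved or monotone quantity special to the warped-product Einstein system that survives the limit $1/n\to 0$, or a sufficiently sharp description of the reduced flow in that singular limit. Absent such an ingredient the statement must remain a conjecture, consistent with the numerical evidence that no non-round cohomogeneity one Einstein metric exists on $S^n$ for even $n\ge 12$.
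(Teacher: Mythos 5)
The statement you are addressing is stated in the paper as a \emph{conjecture}; the paper offers no proof of it, and your proposal does not constitute one either. The reduction in your first two paragraphs (restricting to sum actions via the classification of transitive sphere actions, discarding the bounded list of sporadic actions, and recasting the problem as counting heteroclinics from $p_1$ to $p_2$ in a compactified phase space) is a reasonable and standard framing, consistent with the setup of Section \ref{SectionPreliminaries} and with Remark \ref{d1Remark}. But the entire mathematical content of the conjecture is concentrated in your step (ii): the existence of a functional $\Phi$ whose derivative along every $p_1\to p_2$ orbit acquires a definite sign once $d_1+d_2$ is large. You do not construct $\Phi$, verify any sign condition, or even propose a candidate formula; you only describe what properties it would need to have. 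As you yourself concede in the final sentence, ``absent such an ingredient the statement must remain a conjecture.'' That concession is accurate, and it means the proposal is a research program, not a proof.

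Two further cautionary points. First, the only monotone quantity the paper actually exhibits for this flow is the orbit volume $\mathcal{F}$ of Remark \ref{VolumeMonotonicZDeltaH}, which is monotone only for $H\ge 0$ and which does not distinguish the round solution from non-round ones; there is no indication that a dimension-uniform rigidity functional of the kind you need exists. Second, the mechanism the paper identifies for the \emph{disappearance} of non-round solutions as $n$ grows is not a monotonicity statement but a change in the linearized behaviour along the cone solution (Remark \ref{RemarkLinearizationAlongTheConeSolution}): for $n\le 8$ the eigenvalues $\lambda_{1,2}$ are complex near $H=0$ and the Ricci-flat trajectory spirals, producing infinitely many intersections (Remark \ref{boehmrecovery}); for $n=9$ there is just enough residual rotation (Proposition \ref{AngleBarrier}); for $n\ge 10$ the rotation fails, but, as Remark \ref{NumericalSolutions} stresses, this only shows that solutions cannot be detected from the linearization --- it does not exclude them, and indeed the authors numerically find non-round solutions on $S^{11}$ for $(d_1,d_2)=(2,8)$ and $(3,7)$. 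Any proof of the conjecture must therefore rule out solutions that are invisible to the linearized analysis around the cone, which is precisely the part of your outline that is missing.
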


The question of whether this should be expected even without assumptions on the cohomogeneity is unclear.

\vspace{2mm}

For comparison, the metrics of B\"ohm \cite{BohmInhomEinstein}, Foscolo-Haskins \cite{FoscoloHaskinsNearlyKaehler} and Chi \cite{ChiPositiveEinsteinMetrics} are also of cohomogeneity one, with actions by $SO(d_1+1)\times SO(d_2+1)$, with $5 \leq d_1 +d_2+1 \leq 9$ and $d_1, d_2 \geq 2,$ $SU(2)\times SU(2)$ and $Sp(2)Sp(1)$ respectively. A common feature in their construction is the reliance on symmetric solutions, i.e., solutions admitting a reflection symmetry through a principal orbit. Both B\"ohm \cite{BohmInhomEinstein} and Foscolo-Haskins \cite{FoscoloHaskinsNearlyKaehler} construct their Einstein metrics on spheres by first exhibiting symmetric Einstein metrics on the associated products $S^{d_1+1}\times S^{d_2}$ resp. $S^3\times S^3$. This relies on the counting principle developed by B\"ohm in \cite[Lemmas 4.4 and 4.5]{BohmInhomEinstein}, which makes it possible to find such symmetric solutions. They then combine the existence of these symmetric solutions with other techniques to deduce the existence of non-round Einstein metrics on their spheres. The Einstein metric on $S^8$ constructed by Chi in \cite{ChiPositiveEinsteinMetrics} is itself symmetric (with principal orbit $S^7$) and produced using the counting principle.

In contrast, numerical investigations suggest that in our setup there do not exist symmetric Einstein metrics on the associated products $S^{d_1+1}\times S^{d_2}$ with the single exception of $S^{3}\times S^{7}$, see Remark \ref{NumericalSolutions}. We therefore cannot rely on the counting principle to prove Theorem \ref{EinsteinMetricsOnSpheresMainTheorem}. As a replacement, we introduce a new technique based on a rotation index for curves that allows us to bypass this problem and also still lets us recover B\"ohm's metrics, see Remark \ref{boehmrecovery}.

\vspace{2mm}

We now provide details of the construction. The basic setup is as in B\"ohm's construction of Einstein metrics on $S^{d_1+d_2+1}$ for $d_1, d_2 \geq 2$ and $5 \leq d_1+d_2+1 \leq 9.$ In the following, we denote by $n=d_1+d_2$ the dimension of the principal orbit.

Due to the cohomogeneity one structure, away from the singular orbits $S^{d_2}$ and $S^{d_1}$, the metric is given by $dt^2 + f_1^2(t) g_{S^{d_1}} + f_2^2(t) g_{S^{d_2}}$ and the Einstein equation corresponds to a system of ordinary differential equations for $(f_1, \Dot{f}_1, f_2, \Dot{f}_2).$ The smooth collapse of the principal orbit to the singular orbit $S^{d_2}$ corresponds to the singular initial condition $(f_1, \Dot{f}_1, f_2, \Dot{f}_2)=(0,1,\Bar{f}_2,0)$ at $t=0,$ where the parameter $\Bar{f}_2>0$ controls the volume of the singular orbit. Such a trajectory induces  a smooth Einstein metric on $S^{d_1+d_2+1}$ if in addition there are $T>0$ and $\Bar{f}_1>0$ such that $(f_1, \Dot{f}_1, f_2, \Dot{f}_2)=(\Bar{f}_1,0,0,-1)$ at $t=T.$ Note that in this case the mean curvature of the principal orbit decreases from $+ \infty$ at $t=0$ to $- \infty$ at $t=T.$ 

The sine-suspension over the principal orbit $S^{d_1} \times S^{d_2}$ equipped with the Einstein metric $\frac{d_1-1}{n-1} g_{S^{d_1}} + \frac{d_2-1}{n-1} g_{S^{d_2}}$ is called {\em cone solution}. Specifically, it is given by $f_i(t)= \sqrt{\frac{d_i-1}{n-1}} \sin(t)$ for $t \in (0,\pi)$ and it is singular at both $t=0$ and $t =\pi.$ 

With suitable coordinate changes, we transform the Einstein equation into a regular differential equation on (a set homeomorphic to) the cylinder $\overline{D}_1(0) \times [-1,1] \subset \R^3.$ The interval here is parametrized by the rescaled mean curvature $H$, which decreases from $H=1$ to $H=-1$ along the Einstein ODE.
The smooth collapse of $S^{d_i}$ corresponds to a fixed point $p_i^{\pm} \in S^1 \times \{ H= \pm 1 \}.$ In particular, a trajectory in $D_1(0) \times (-1,1)$ connecting $p_1^+$ and $p_2^-$ corresponds to a smooth Einstein metric on $S^{d_1+d_2+1}$ with singular orbit $S^{d_2}$ at $H=1$ and $S^{d_1}$ at $H=-1.$ In these coordinates, the cone solution corresponds to the trajectory $(0,0,H)$ for $H \in (-1,1).$ We call $(0,0,\pm 1)$ the base points of the cone solution. These are fixed points of the ODE.
\begin{figure}[h]
    \centering
    \includegraphics[scale=0.42]{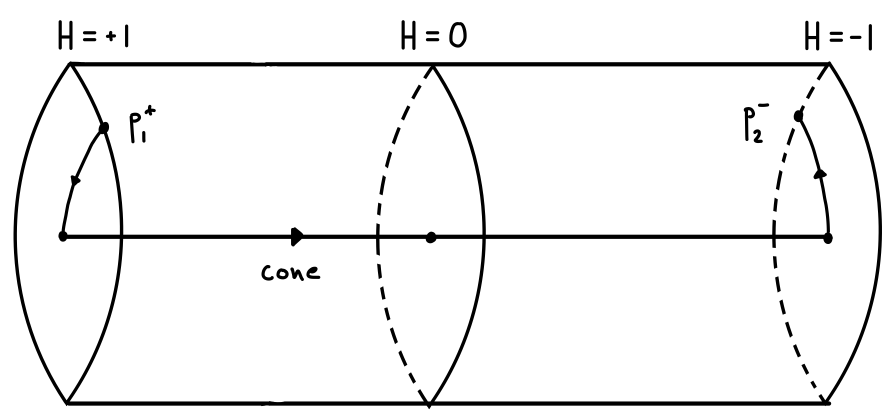}
    \caption{Coordinate Cylinder with selected trajectories}
    \label{fig:fig1}
\end{figure}

We note that the boundary parts $D_1(0) \times \{ \pm 1 \}$, $S^1 \times \{ \pm 1 \}$ and $S^1 \times (-1,1)$ of the cylinder are preserved by the ODE. Furthermore, all fixed points are hyperbolic and we denote by $M_i^+$ (resp. $M_i^-$) the part of the unstable manifold of $p_i^+$ (resp. the stable manifold of $p_i^-$), i.e., the union of all trajectories emanating from $p_i^+$ (resp. converging to $p_i^-$), in $\overline{D}_1(0) \times [-1,1].$ The $M_i^{\pm}$ are $2$-dimensional contractible surfaces. Since $(0,0,H)$ corresponds to the cone solution, we may introduce polar coordinates on $D_1 \times \{H = h \}$. This way we may pick an angle function $\varphi$ on $M_1^+$ such that $p_1^+ \in S^1 \times \{H = 1 \}$ has an angle $\varphi(p_1^+) \in (0, \frac{\pi}{2}).$

The trajectory $\gamma_i^{\text{RF}}$ in $M_i^+ \cap \{ H=1 \}$ connects $p_i^+$ and the origin in $D_1(0),$ i.e., the base point of the cone solution. This trajectory corresponds to the Ricci flat metric on $\R^{d_i+1} \times S^{d_j}$ discovered by B\"ohm \cite{BohmNonCompactComhomOneEinstein}. Furthermore, trajectories in $D_1(0) \times (-1,1)$ that come close to the base point of the cone solution near $H=1$ remain close to the cone solution as long as $H \geq 0.$ This is a special case of B\"ohm's Convergence Theorem \cite[Theorem 5.7]{BohmInhomEinstein}.

Note that Einstein metrics on $S^{d_1+d_2+1}$ correspond to intersection points of $M_1^+$ and $M_2^-$ in $\{ H = 0 \}.$ In particular, we can deduce the existence of Einstein metrics from the geometry of $M_i^{\pm} \cap \{H = 0\}.$ The key idea is the observation that one obtains two intersection points of $M_1^+$ and $M_2^-$ in $\{ H = 0 \}$ if the angle function $\varphi$ of $M_1^+$ attains values $\varphi > \frac{3}{2}\pi$ for points in the slice $M_i^{\pm} \cap \{ H = 0 \}$ near the cone solution, cf. Figure \ref{fig:fig2}.

To see this, we observe the following: The fixed point $p_1^+$ is in the first quadrant of $S^1 \times \{ H=1 \}$ and the trajectory which emanates from $p_1^+$ and lies in the boundary part $S^1 \times (-1,1)$ starts and remains in the first quadrant of the $S^1$-factor. Thus the curve $M_1^+ \cap \{ H = 0 \}$ meets the boundary of $\{ H = 0 \}$ in the first quadrant.  At the other end it approaches the cone solution, i.e., the origin in $D_1 \times \{ H= 0 \},$ as a consequence of B\"ohm's convergence theorem. By reversing the Einstein ODE, an analogous argument shows that the curve $M_2^- \cap \{ H = 0 \}$ has a similar geometry.
If the angle function on $M_1^+$ increases to $\varphi > \frac{3}{2} \pi$ at points in the slice $M_1^+ \cap \{ H = 0 \}$ near the cone solution, then both curves exhibit sufficient rotation around the cone solution to deduce the existence of at least two intersection points of the curves $M_1^+ \cap \{ H = 0 \}$ and $M_2^- \cap \{ H = 0 \}$. One of the intersection points then corresponds to the round metric, the other to a non-round Einstein metric.

Therefore it suffices to prove that the angle function $\varphi$ of $M_1^+$ indeed attains values $\varphi > \frac{3}{2}\pi$ in the slice $\{ H = 0\}$ at points near the cone solution. In dimension $n+1=10,$ the linearization at the base point of the cone solution shows that the trajectory $\gamma_1^{\text{RF}},$ and thus the curve $M_1^+ \cap \{ H =1 \},$ approaches the base point of the cone solution in a specific tangent direction. 

To illustrate the idea behind the proof, suppose that $M_i^+$ extends $C^2$-regularly to its boundary at the cone solution. In particular, each slice $M_i^+ \cap \{H = h \}$ then has a well-defined tangent direction $X_h$ at the cone solution. $X_h$ now satisfies the linearized Einstein ODE along the cone solution and $\lim_{h \to 1} X_h$ is the tangent direction of $\gamma_1^{\text{RF}}$ at the cone solution. The key idea of the proof is that thus the geometry of $M_1^+ \cap \{ H = h \}$ near the cone solution is determined by the tangent direction of $\gamma_1^{\text{RF}}$ and the behaviour of the linearized ODE along the cone solution. In particular, one obtains $\varphi > \frac{3}{2} \pi$ near the cone solution in $M_1^+ \cap \{ H = 0 \}$ provided $X_h$ rotates sufficiently around the cone solution as $h$ decreases from $h=1$ to $h=0.$ When writing $X_h$ in polar coordinates, its angle satisfies the ODE \eqref{AngleODEeq1}. The required estimate on the angle then follows from a direct ODE comparison argument as in Proposition \ref{AngleBarrier}. 

\begin{figure}[!htb]
     \centering
     \includegraphics[scale = .4]{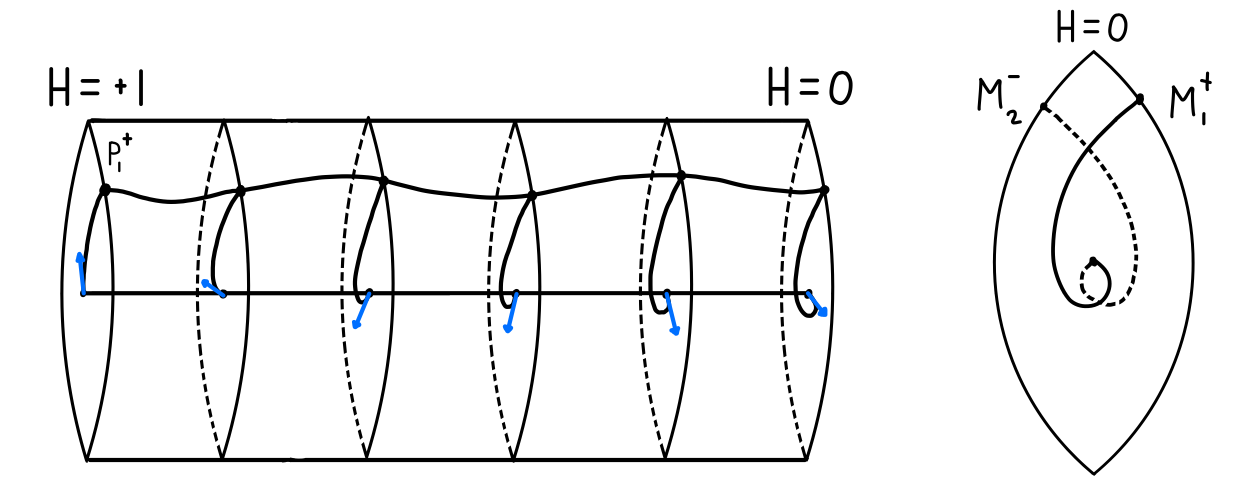}
     \caption{Left: Twisting of the curve $M_1^+\cap \{H=h\}$, with tangent direction vector field $X$ in blue. \\Right: The slice $H=0$. $M_2^-$ twists opposite to $M_1^+$. If there is enough twisting, the two curves must intersect at least twice.}\label{fig:fig2}
\end{figure}

With regard to the technical execution of the argument, we make the following two remarks. First we note that the regularity assumption on $M_i^+$ at the cone solution is difficult to verify and we rely instead on quantitative estimates. Thus, to estimate the winding angle of $M_1^+ \cap \{ H = h\}$ around the cone solution, we define a helicoidal surface by solving an adjusted angle ODE with $\varphi_0 > \frac{3}{2} \pi$ in the slice $\{ H=0 \}.$ Then we show that the surface passes the tangent direction of $M_1^+ \cap \{ H = 1 \}$ at the origin. The slight adjustment is necessary to ensure that the trajectories of the nonlinear Einstein ODE remain on one side of the helicoidal surface and thus obey the same rotational behaviour. This suffices to deduce the required geometry of the curve $M_1^+ \cap \{ H = 0 \}.$

To then deduce the existence of a given number of intersections points, we need to control in addition the behaviour of the intersection point of $M_1^+\cap \{H=h\}$ with the boundary, as this could in principle cancel out the rotational behaviour around the cone solution. For this it is convenient to have coordinates that exist far away from the cone solution. Finally we use a winding number argument on an associated closed curve to produce intersections. \vspace{2mm}

\textit{Structure.} In the preliminary section \ref{SectionPreliminaries} we recall the basic setup of cohomogeneity one Einstein manifolds and in particular the case of $SO(d_1+1) \times SO(d_2+1)$-invariant Einstein metrics on $S^{d_1+d_2+1}.$ In section \ref{SectionCoordinateChanges} we carry out the coordinate change to the cylinder. We define $M_i^{\pm}$, recover B\"ohm's Convergence Theorem and compute the tangent direction of $\gamma_i^{\text{RF}}$ at the base point of the cone solution. In section \ref{SectionRotationIndex} we define a generalized rotation index and we prove the key Lemma \ref{ThetaBoundsIntersections} on the number of intersection points. In section \ref{SectionODEestimates} we prove key inequalities for barrier solutions of the angle ODE \eqref{AngleODEeq1} and the adjusted angle ODE \eqref{DeltaAngleODE}, respectively. Section \ref{SectionProofOfMainTheorem} contains the proof of Theorem \ref{EinsteinMetricsOnSpheresMainTheorem} as well as concluding remarks. In particular, we comment on numerical solutions and indicate how one obtains an independent construction of B\"ohm's Einstein metrics on $S^5, \ldots, S^9$ using our methods. \vspace{2mm}

\textit{Acknowledgments.} We would like to thank Christoph B\"ohm for pointing out to us that he numerically observed the existence of Einstein metrics on $S^{10}$ during his work on \cite{BohmInhomEinstein}. We also thank Christoph B\"ohm, Lukas Fu{\ss}angel, Peter Petersen and Marco Radeschi for constructive comments on an earlier version of this paper. Furthermore we would like to thank the referees for numerous helpful comments.

JN acknowledges support by the Alexander von Humboldt Foundation through Gustav Holzegel's Alexander von Humboldt Professorship endowed by the Federal Ministry of Education and Research. Both authors were funded by the Deutsche Forschungsgemeinschaft (DFG, German Research Foundation) under Germany's Excellence Strategy EXC 2044–390685587, Mathematics M\"unster: Dynamics–Geometry–Structure.

\section{Preliminaries}
\label{SectionPreliminaries}

\subsection{Cohomogeneity one Einstein manifolds}

Let $(M,g)$ be an $(n+1)$-dimensional Riemannian manifold. $(M,g)$ is called Einstein if
\begin{equation*}
    \Ric = \lambda g
\end{equation*}
for some $\lambda \in \R.$

Suppose that a Lie group $G$ acts isometrically on $(M,g)$ with cohomogeneity one, i.e., the orbit space $M/G$ is one-dimensional. Any Einstein manifold of cohomogeneity one is either flat or has positive scalar curvature, \cite{BerardSurNouvellesVariteEinstein}. If $\lambda>0,$ due to Myers' theorem, $M$ is compact with finite fundamental group and thus $M/G$ is necessarily a compact interval. 

Away from the singular orbits, we may parametrize the metric as $dt^2 + g_t$, where $g_t$ is a family of $G$-invariant metrics on the principal orbit. Let $L_t$ denote the shape operator and $r_t$ the Ricci curvature of the principal orbit. Then,
by the work of Eschenburg-Wang \cite{EschenburgWangInitialValueProblem}, 
\begin{align}
\label{EvolutionMetric}
    \Dot{g}_t = 2 g_t L_t
\end{align}
and the Einstein equations are given by
\begin{align}
    \label{DerivativeL}
    - \Dot{L} -  \tr( L ) L + r & = \lambda \id, \\
    \label{DerivativeTrL}
    -\tr( \Dot{L} ) - \tr( L^2 ) & = \lambda, \\
    \Ric(X,N) & = 0, \nonumber
\end{align}
where $X$ is tangent to the principal orbit and $N$ is a horizontal lift of $\frac{\partial}{\partial t},$ a unit speed vector field on $M/G.$ Furthermore, any solution to \eqref{DerivativeL}, \eqref{DerivativeTrL} satisfies the constraint equation
\begin{align}
\label{GeneralConstraintEquation}
    \tr ( L^2 ) + \tr (r) - \tr(L)^2 = (n-1) \lambda.
\end{align}

Conversely, any solution to \eqref{EvolutionMetric}, \eqref{DerivativeL} that extends $C^{3}$-regularly to a singular orbit of dimension strictly less than that of the principal orbit induces a (possibly incomplete) Einstein metric on the associated disk bundle, see \cite[Corollary 2.6]{EschenburgWangInitialValueProblem}.

\subsection{Einstein metrics on spheres} 
\label{SectionEinsteinMetricsOnSpheresIntro}

From now on we consider $SO(d_1+1) \times SO(d_2+1)$-invariant Einstein metrics on the spheres $S^{d_1+d_2+1}$, for $d_1, d_2 \geq 2$.
This setup was considered by B\"ohm in \cite{BohmInhomEinstein} for $n=d_1+d_2 \leq 8.$ The metric is given by $dt^2 + g_t$, where
\begin{align*}
    g_t = f_1^2(t) \ g_{S^{d_1}} + f_2^2(t) \ g_{S^{d_2}}
\end{align*}
is the metric on the principal orbit $S^{d_1} \times S^{d_2}$ and $g_{S^{d_i}}$ denotes the round metric on $S^{d_i}.$ The shape operator and the Ricci curvature of the principal orbit satisfy 
\begin{align*}
    L = \left( \frac{\Dot{f}_1}{f_1} \id_{d_1}, \frac{\Dot{f}_2}{f_2} \id_{d_2} \right), \ \ 
    r= \left( \frac{d_1 -1}{f_1^2} \id_{d_1}, \frac{d_2 -1}{f_2^2} \id_{d_2} \right).
\end{align*}
In particular, \eqref{EvolutionMetric} is always satisfied. B\"ohm proved in \cite[Theorem 2.3]{BohmInhomEinstein} that for any $\Bar{f}_2>0$ there is a unique solution $(f_1, \Dot{f}_1, f_2, \Dot{f}_2)$ to \eqref{DerivativeL} with initial condition
\begin{align}
\label{InitialSmoothCollapseSDOneFcoords}
    f_1(0)=0, \ \Dot{f}_1(0)=1, \ f_2(0)=\Bar{f}_2, \ \Dot{f}_2(0)=0.
\end{align}
The initial condition corresponds to a smooth collapse of the $S^{d_1}$ and the solution induces a smooth Einstein metric on the disk bundle $D^{d_1 + 1} \times S^{d_2}.$ Solutions that in addition satisfy
\begin{align}
\label{TerminalSmoothCollapseSDTwoFcoords}
    f_1(T)=\Bar{f}_1, \ \Dot{f}_1(T)=0, \ f_2(T)=0, \ \Dot{f}_2(T)=-1
\end{align}
for some $T>0$ and $\Bar{f}_1>0$ induce smooth Einstein metrics on $S^{d_1+d_2+1}.$ \vspace{2mm}

\begin{definition}
\label{ConeSolution}
    \normalfont
    An important singular solution to the Einstein equations is the {\em cone solution}  given by
\begin{align*}
    dt^2 + \frac{d_1-1}{n-1} \sin(t)^2 g_{S^{d_1}} + \frac{d_2-1}{n-1} \sin(t)^2  g_{S^{d_2}}
\end{align*}
for $t \in (0,\pi).$    
\end{definition}

\section{Coordinate changes}
\label{SectionCoordinateChanges}

Fix $\lambda >0$. Following Chi \cite{ChiPositiveEinsteinMetrics}, set 
\begin{align*}
    \mathcal{L}  = \frac{1}{\sqrt{ \tr(L)^2+n \lambda}}, \ \
    \frac{d}{ds}  = \mathcal{L} \ \frac{d}{dt}.
\end{align*}
The derivative with respect to $s$ will be denoted by prime $'.$ 

\subsection{$(X,Y,H)$-coordinates} 
\label{SectionHXYCoordinates}
We introduce the coordinates
\begin{align*}
    X_i  = \mathcal{L} \ \frac{\Dot{f}_i}{f_i}, \ \
    Y_i  =  \frac{\mathcal{L} }{f_i}, \ \
    H  = \mathcal{L} \ \tr(L).
\end{align*}
Note that 
\begin{align*}
    H = \sum_{i=1}^2 d_i & X_i, \ \ 
    n \lambda \mathcal{L}^2 = 1 - \frac{\tr(L)^2}{\tr(L)^2 +n \lambda} = 1 - H^2 \ \text{ and } \\
    & \mathcal{L}^{'} = \mathcal{L} H \left( \sum_{i=1}^2 d_i X_i^2 + \frac{1}{n}(1-H^2) \right).
\end{align*}
It follows from \eqref{DerivativeL} and \eqref{DerivativeTrL} that 
\begin{align}    
    \label{EinsteinODEinHXYequ}
    X_j^{'} & = X_j H \left( \sum_{i=1}^2 d_i X_i^2 + \frac{1}{n} ( 1-H^2 ) - 1 \right) + (d_j-1) Y_j^2 - \frac{1}{n} ( 1 - H^2 ), \\
    \nonumber
    Y_j^{'} & = Y_j \left( H \left( \sum_{i=1}^2 d_i X_i^2 + \frac{1}{n} ( 1-H^2 ) \right) - X_j \right), \\
    \nonumber
    H^{' } & = (H^2-1) \left( \sum_{i=1}^2 d_i X_i^2 + \frac{1}{n} ( 1-H^2 ) \right)
\end{align}
for $j=1,2$. 

Similarly, a straightforward computation yields that
\begin{align*}
    \mathcal{S}_1 & = \sum_{i=1}^2 d_i X_i^2 + \sum_{i=1}^2d_i (d_i-1) Y_i^2 - \frac{1}{n} H^2 - \frac{n-1}{n}, \\
    \mathcal{S}_2 & =  \sum_{i=1}^2 d_i X_i - H
\end{align*}
satisfy
\begin{align*}
   \frac{1}{2} \mathcal{S}_1^{'} & = \mathcal{S}_1 H \left( \sum_{i=1}^2 d_i X_i^2 + \frac{1}{n} (1 - H^2) \right) - \frac{1}{n} (1 - H^2) \mathcal{S}_2, \\
    \mathcal{S}_2^{'} & = \mathcal{S}_1 + \mathcal{S}_2 H \left( \sum_{i=1}^2 d_i X_i^2 + \frac{1}{n} (1 - H^2) -1 \right).
\end{align*}

Note that any solution $(X_j, Y_j,H)$ to \eqref{EinsteinODEinHXYequ} that is induced by an Einstein metric satisfies $\mathcal{S}_1=\mathcal{S}_2 = 0.$ In particular, the constraint equation \eqref{GeneralConstraintEquation} corresponds to $\mathcal{S}_1 \equiv 0.$

\begin{proposition}
\label{RecoverMetricFromHXY}
A solution $(X_j, Y_j,H)$ to \eqref{EinsteinODEinHXYequ} with $\mathcal{S}_1=\mathcal{S}_2 = 0$, $H \in (-1,1)$ and $Y_1, Y_2 >0$ at some $s_0 \in \R$ induces a solution to \eqref{EvolutionMetric} - \eqref{DerivativeTrL} by setting
\begin{align*}
    \mathcal{L} = \sqrt{\frac{1 - H^2}{n \lambda}}, \ t=t(s_0) + \int_{s_0}^s \mathcal{L} ( \tau ) d \tau, \ \frac{\Dot{f}_i}{f_i} = \frac{X_i}{\mathcal{L}}, \ \text{and} \ f_i=\frac{\mathcal{L}}{Y_i}
\end{align*}
for $i=1,2.$
\end{proposition}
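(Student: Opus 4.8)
The plan is to observe that the displayed formulas simply invert the coordinate change of Subsection~\ref{SectionHXYCoordinates}, and that the hypotheses $\mathcal{S}_1 = \mathcal{S}_2 = 0$ are exactly what makes this inversion consistent. First I would propagate the hypotheses along the flow: the pair $(\mathcal{S}_1, \mathcal{S}_2)$ solves the linear homogeneous system displayed just before the proposition, so vanishing of both at $s_0$ forces $\mathcal{S}_1 \equiv \mathcal{S}_2 \equiv 0$ on the whole interval of definition, and in particular $H = \sum_i d_i X_i$ there. Likewise, the sets $\{H = 1\}$, $\{H = -1\}$, $\{Y_1 = 0\}$ and $\{Y_2 = 0\}$ are invariant under the flow of \eqref{EinsteinODEinHXYequ} (the corresponding component of the right-hand side vanishes on each), so $H \in (-1,1)$ and $Y_1, Y_2 > 0$ for all $s$ as well. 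Consequently $\mathcal{L} := \sqrt{(1-H^2)/(n\lambda)}$ is smooth and positive, $t(s) = t(s_0) + \int_{s_0}^{s} \mathcal{L}\, d\tau$ is a smooth strictly increasing reparametrization, and $f_i := \mathcal{L}/Y_i$ is smooth and positive.

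Next I would record the consistency identities. Differentiating $n\lambda\, \mathcal{L}^2 = 1 - H^2$ and inserting the $H$-equation of \eqref{EinsteinODEinHXYequ} gives $\mathcal{L}'/\mathcal{L} = H\big(\sum_i d_i X_i^2 + \tfrac{1}{n}(1-H^2)\big)$, and subtracting the $Y_j$-equation of \eqref{EinsteinODEinHXYequ} yields $\mathcal{L}'/\mathcal{L} - Y_j'/Y_j = X_j$. Since $\tfrac{d}{dt} = \tfrac{1}{\mathcal{L}}\tfrac{d}{ds}$, this is precisely $\dot f_j/f_j = X_j/\mathcal{L}$, so that $X_j = \mathcal{L}\, \dot f_j / f_j$ and $Y_j = \mathcal{L}/f_j$ reproduce the original definitions; moreover, using $\mathcal{S}_2 = 0$ one gets $\mathcal{L}\, \tr(L) = \sum_i d_i X_i = H$ and hence $\tr(L)^2 + n\lambda = H^2/\mathcal{L}^2 + n\lambda = 1/\mathcal{L}^2$, i.e. $\mathcal{L} = 1/\sqrt{\tr(L)^2 + n\lambda}$. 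Thus applying the coordinate change of Subsection~\ref{SectionHXYCoordinates} to $(f_1, \dot f_1, f_2, \dot f_2)$ returns $(X_1, Y_1, X_2, Y_2, H)$ together with the original $\mathcal{L}$, so the correspondence is a genuine two-sided inverse on the relevant domain.

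Finally I would verify the equations. With $L = \big(\tfrac{\dot f_1}{f_1}\id_{d_1}, \tfrac{\dot f_2}{f_2}\id_{d_2}\big)$ and $r = \big(\tfrac{d_1-1}{f_1^2}\id_{d_1}, \tfrac{d_2-1}{f_2^2}\id_{d_2}\big)$, equation \eqref{EvolutionMetric} holds by the definition of $L$. For \eqref{DerivativeL} I would compute $\tfrac{d}{dt}(\dot f_j/f_j) = \tfrac{1}{\mathcal{L}}\tfrac{d}{ds}(X_j/\mathcal{L})$, substitute the $X_j$- and $\mathcal{L}'$-equations, and use $\tr(L) = H/\mathcal{L}$, $f_j^{-2} = Y_j^2/\mathcal{L}^2$ and $\lambda\mathcal{L}^2 = \tfrac{1}{n}(1-H^2)$; after multiplying through by $\mathcal{L}^2$, both sides collapse to $-HX_j + (d_j-1)Y_j^2 - \tfrac{1}{n}(1-H^2)$, which is exactly \eqref{DerivativeL}. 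Since $\mathcal{S}_1 \equiv 0$ corresponds to the constraint \eqref{GeneralConstraintEquation}, taking the trace of \eqref{DerivativeL} and subtracting \eqref{GeneralConstraintEquation} then gives $-\tr(\dot L) - \tr(L^2) = \lambda$, which is \eqref{DerivativeTrL}.

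I do not expect a serious obstacle: the statement is really a bookkeeping exercise in inverting a change of variables. The only point that needs care — and the only place where \emph{both} hypotheses $\mathcal{S}_1 = 0$ and $\mathcal{S}_2 = 0$ are genuinely used — is tracking the interplay between the constraint ($\mathcal{S}_1 = 0$), the relation $H = \sum_i d_i X_i$ that identifies $H$ with $\mathcal{L}\, \tr(L)$ ($\mathcal{S}_2 = 0$), and the fact that neither of these identities is preserved by \eqref{EinsteinODEinHXYequ} on its own, so that neither can be dropped from the hypotheses.
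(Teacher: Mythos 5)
Your proposal is correct and follows essentially the same route as the paper's (much terser) proof: propagate the invariance of $\mathcal{S}_1=\mathcal{S}_2=0$, $H\in(-1,1)$, $Y_i>0$ via the linear homogeneous system for $(\mathcal{S}_1,\mathcal{S}_2)$ and the structure of the $H$- and $Y_j$-equations, check that $\mathcal{L}$ satisfies its ODE so that \eqref{EvolutionMetric} holds, and use $H=\sum_i d_iX_i$ to recover \eqref{DerivativeL} and \eqref{DerivativeTrL}. Your extra observations (that $\mathcal{L}=1/\sqrt{\tr(L)^2+n\lambda}$ is reproduced, and that \eqref{DerivativeTrL} follows from the trace of \eqref{DerivativeL} together with the constraint $\mathcal{S}_1\equiv 0$) are accurate fillings-in of the details the paper leaves to the reader.
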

\begin{proof}
    Note that the conditions $\mathcal{S}_1=\mathcal{S}_2 = 0$, $H \in (-1,1)$ and $Y_1, Y_2 >0$ are preserved. In particular, $\mathcal{L}$ is well defined and satisfies the corresponding ODE. This implies \eqref{EvolutionMetric}. Since $H=\sum_{i=1}^2 d_i X_i,$ one easily recovers \eqref{DerivativeL} and \eqref{DerivativeTrL}.
\end{proof}

\subsection{$(Y, \Delta,H)$-coordinates} 
\label{SubsectionHYDeltaCoordinates}
Let 
\begin{align*}
    %\label{DefinitionDelta}
    \Delta = X_1 - X_2.
\end{align*}
Any given $(\Delta, H)$ uniquely determines $(X_1, X_2)$ such that $\mathcal{S}_2=0$ via $X_1 = \frac{1}{n} ( d_2 \Delta + H),$ $X_2 = \frac{1}{n} ( -d_1 \Delta + H).$ This transforms between $(X,Y,H)$-coordinates and $(Y, \Delta, H)$-coordinates.
%Conversely,
%\begin{align*}
%    d_2 \Delta + H = n X_1, \ \text{ and } \  -d_1 \Delta + = n X_2.
%\end{align*}
With the observation that 
\begin{align*}
    \sum_{i=1}^2 d_i X_i^2 = \frac{1}{n} ( d_1 d_2 \Delta^2 + H^2 ),
\end{align*}
it is straightforward to compute that \eqref{EinsteinODEinHXYequ} is equivalent to 
\begin{align}
    \label{EinsteinODEinHYDelta}
    Y_1^{'} & = \frac{d_1 d_2 }{n} Y_1 \Delta \left( \Delta H - \frac{1}{d_1} \right), \\
    \nonumber
    Y_2^{'} & = \frac{d_1 d_2 }{n} Y_2 \Delta \left( \Delta H + \frac{1}{d_2} \right), \\
    \nonumber
    \Delta^{'} & = \frac{\Delta H}{n} \left( d_1 d_2 \Delta^2 - (n-1) \right) + (d_1-1) Y_1^2 - (d_2 -1) Y_2^2, \\
    \nonumber
    H^{'} & = - \frac{1-H^2}{n} \left( d_1 d_2 \Delta^2 +1 \right).
\end{align}
By Proposition \ref{RecoverMetricFromHXY}, Einstein metrics correspond to solutions of \eqref{EinsteinODEinHYDelta} satisfying in addition
\begin{align*}
    H^2 < 1,\ \
    Y_i >0, \ \
    \mathcal{S}_1 = \sum_{i=1}^2 d_i (d_i-1) Y_i^2 + \frac{d_1 d_2}{n} \Delta^2 - \frac{n-1}{n} = 0.
\end{align*}

\begin{proposition}
     \label{EinsteinLocus}
    The set
    \begin{align*}
        \mathcal{S} = \left\lbrace (Y_1,Y_2,\Delta, H)\in \R^4 \ | \ Y_i\ge 0, \ H^2\le 1 \right\rbrace \cap \left\lbrace \mathcal{S}_1 = 0 \right\rbrace
    \end{align*}
    is invariant under the Einstein ODE \eqref{EinsteinODEinHYDelta}, as is its boundary.
%    \begin{align*}
%        \mathcal{S} \cap (\{Y_1=0\}\cup \{Y_2=0\}\cup \{H^2=1\})
%    \end{align*}
    Moreover, solutions in $\mathcal{S}$ exist for all times. The coordinate $H$ is non-increasing on $[-1,1]$ and in fact decreasing for $H \ne \pm 1$.
\end{proposition}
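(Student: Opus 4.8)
The plan is to prove the three assertions of Proposition \ref{EinsteinLocus} in order: invariance of $\mathcal{S}$ and of $\partial\mathcal{S}$, global existence of solutions in $\mathcal{S}$, and the monotonicity of $H$.

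For the invariance of $\mathcal{S}$, I would argue that each of the defining conditions is separately preserved. The slab $H^2 \le 1$ is preserved because the last equation of \eqref{EinsteinODEinHYDelta} gives $H' = -\tfrac{1-H^2}{n}(d_1d_2\Delta^2+1)$, so the hyperplanes $\{H=1\}$ and $\{H=-1\}$ are invariant (when $H=\pm1$ one has $H'=0$) and $H$ cannot exit $[-1,1]$. The halfspaces $Y_i \ge 0$ are preserved because $Y_i' = \tfrac{d_1d_2}{n}Y_i\Delta(\Delta H \mp \tfrac{1}{d_i})$ vanishes on $\{Y_i=0\}$, so $\{Y_i=0\}$ is an invariant set and $Y_i$ has a fixed sign along each trajectory. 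Finally, $\{\mathcal{S}_1=0\}$ is invariant: from the $(X,Y,H)$-computation in Section \ref{SectionHXYCoordinates} we have $\tfrac12\mathcal{S}_1' = \mathcal{S}_1 H(\sum d_i X_i^2 + \tfrac1n(1-H^2)) - \tfrac1n(1-H^2)\mathcal{S}_2$, and since the coordinate change to $(Y,\Delta,H)$ was defined precisely so that $\mathcal{S}_2 \equiv 0$, this reduces to $\tfrac12\mathcal{S}_1' = \mathcal{S}_1\cdot H(\sum d_i X_i^2 + \tfrac1n(1-H^2))$, a linear homogeneous ODE for $\mathcal{S}_1$ along any trajectory; hence $\mathcal{S}_1(s_0)=0$ forces $\mathcal{S}_1\equiv0$. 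The boundary $\partial\mathcal{S}$ is the union of the invariant pieces $\{Y_1=0\}$, $\{Y_2=0\}$, $\{H=1\}$, $\{H=-1\}$ intersected with $\mathcal{S}$, each of which I just showed is invariant, so $\partial\mathcal{S}$ is invariant as well.

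For global existence: on $\mathcal{S}$ the constraint $\mathcal{S}_1=0$ reads $\sum_i d_i(d_i-1)Y_i^2 + \tfrac{d_1d_2}{n}\Delta^2 = \tfrac{n-1}{n}$, which (since $d_1,d_2\ge2$ so all coefficients are positive) shows that $Y_1,Y_2,\Delta$ are uniformly bounded on $\mathcal{S}$; together with $|H|\le1$ this makes $\mathcal{S}$ compact. The vector field \eqref{EinsteinODEinHYDelta} is smooth on a neighborhood of this compact invariant set, so trajectories starting in $\mathcal{S}$ cannot escape to infinity in finite time, and therefore exist for all $s\in\R$.

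For the monotonicity of $H$: the equation $H' = -\tfrac{1-H^2}{n}(d_1d_2\Delta^2+1)$ has $d_1d_2\Delta^2 + 1 \ge 1 > 0$ always, so the sign of $H'$ is $-\operatorname{sign}(1-H^2)$; on the interior $|H|<1$ this is strictly negative, giving that $H$ is strictly decreasing there, while at $H=\pm1$ we get $H'=0$, consistent with those being fixed levels — so on all of $[-1,1]$ the function $H$ is non-increasing and strictly decreasing wherever $H\ne\pm1$. The only mild subtlety worth a sentence is that a trajectory could a priori sit at $H\equiv1$ or $H\equiv-1$ forever; this is exactly the invariance of those boundary faces and does not contradict "decreasing for $H\ne\pm1$."

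The main obstacle, such as it is, is organizational rather than deep: the cleanest proof of $\{\mathcal{S}_1=0\}$-invariance uses the auxiliary identity for $\mathcal{S}_1'$ that was recorded in the $(X,Y,H)$-coordinates of Section \ref{SectionHXYCoordinates} rather than re-deriving it in $(Y,\Delta,H)$-coordinates, so I would be careful to invoke that computation (and the fact that the $(Y,\Delta,H)$-parametrization bakes in $\mathcal{S}_2=0$) explicitly. Everything else is a direct read-off from the explicit form of \eqref{EinsteinODEinHYDelta}.
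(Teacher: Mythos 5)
Your proof is correct in substance and follows the same route as the paper's (very terse) proof: invariance of each defining condition read off from the explicit ODE, compactness of $\mathcal{S}$ from the positive definiteness of the quadratic form in the constraint, and monotonicity of $H$ from the sign of $H'$. One step deserves a caveat, though. To show that $\{\mathcal{S}_1=0\}$ is invariant under the four-dimensional system \eqref{EinsteinODEinHYDelta}, you invoke the identity $\tfrac12\mathcal{S}_1' = \mathcal{S}_1 H(\sum d_i X_i^2 + \tfrac1n(1-H^2)) - \tfrac1n(1-H^2)\mathcal{S}_2$ and set $\mathcal{S}_2=0$. But that identity is derived from the five-dimensional system \eqref{EinsteinODEinHXYequ}, and a trajectory of \eqref{EinsteinODEinHYDelta} lifted via $X_1=\tfrac1n(d_2\Delta+H)$, $X_2=\tfrac1n(-d_1\Delta+H)$ is \emph{not} a trajectory of \eqref{EinsteinODEinHXYequ} when $\mathcal{S}_1\ne 0$ (indeed $\mathcal{S}_2'=\mathcal{S}_1+\dots$ shows the five-dimensional flow leaves $\{\mathcal{S}_2=0\}$ precisely when $\mathcal{S}_1\ne 0$), which is exactly the regime your Gronwall-type argument must cover. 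A direct computation in the $(Y,\Delta,H)$-coordinates gives the correct formula $\mathcal{S}_1' = \tfrac{2d_1d_2}{n}\Delta^2 H\,\mathcal{S}_1$ (your formula is off by an extra term $-\tfrac{2H}{n}\mathcal{S}_1$); since this is still linear homogeneous in $\mathcal{S}_1$, your conclusion stands, but the derivation should either be done directly in the reduced coordinates or be replaced by a tangency argument on the regular level set $\{\mathcal{S}_1=0\}$, where the two vector fields do agree. Everything else — the invariance of $\{Y_i=0\}$ and $\{H=\pm1\}$, the decomposition of $\partial\mathcal{S}$, compactness and global existence, and the strict monotonicity of $H$ for $H\ne\pm1$ — is correct as written.
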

\begin{proof}
    The invariance of $\mathcal{S}$ and its boundary is immediate. Note that $\mathcal{S}$ is compact since the highest order term in $\mathcal{S}_1$ is a positive definite quadratic form in $(Y, \Delta)$. This implies long time existence of solutions in $\mathcal{S}.$
\end{proof}

\begin{proposition}
\label{VolumeMonotonic}
    The function $Y_1^{2 d_1} Y_2^{2 d_2}$ is non-decreasing along \eqref{EinsteinODEinHYDelta} for $H \geq 0.$
\end{proposition}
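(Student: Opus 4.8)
The plan is to compute the logarithmic derivative of $P := Y_1^{2d_1} Y_2^{2d_2}$ directly from the evolution equations for $Y_1$ and $Y_2$ in \eqref{EinsteinODEinHYDelta} and observe that everything collapses to a manifestly nonnegative expression when $H \geq 0$.

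First I would record, wherever $Y_1, Y_2 > 0$, that \eqref{EinsteinODEinHYDelta} gives
\begin{align*}
    \frac{Y_1^{'}}{Y_1} = \frac{d_1 d_2}{n} \Delta \left( \Delta H - \frac{1}{d_1} \right), \qquad
    \frac{Y_2^{'}}{Y_2} = \frac{d_1 d_2}{n} \Delta \left( \Delta H + \frac{1}{d_2} \right).
\end{align*}
Then I would form
\begin{align*}
    \frac{P^{'}}{P} = 2 d_1 \frac{Y_1^{'}}{Y_1} + 2 d_2 \frac{Y_2^{'}}{Y_2}
    = \frac{2 d_1 d_2}{n} \Delta \left( d_1 \Delta H - 1 + d_2 \Delta H + 1 \right)
    = \frac{2 d_1 d_2}{n} \Delta \cdot (d_1 + d_2)\, \Delta H = 2 d_1 d_2\, \Delta^2 H,
\end{align*}
using $d_1 + d_2 = n$; the key point is that the two $\tfrac{1}{d_i}$ terms cancel. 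Hence $P^{'} = 2 d_1 d_2\, \Delta^2 H \cdot P \geq 0$ whenever $H \geq 0$, which is precisely the claim for the portion of a trajectory lying in $\{H \geq 0\}$ (a possibly unbounded initial $s$-interval, since $H$ is non-increasing by Proposition \ref{EinsteinLocus}).

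The only point requiring a word of care is the boundary $\partial \mathcal{S}$, where $Y_1$ or $Y_2$ may vanish and the logarithmic derivative is not defined. This is not a genuine obstacle: the identity $P^{'} = 2 d_1 d_2\, \Delta^2 H \cdot P$ is a polynomial identity in $(Y_1, Y_2, \Delta, H)$, valid on all of $\mathcal{S}$ (both sides vanish when $Y_i = 0$), so the monotonicity conclusion holds on the full invariant set $\mathcal{S}$ for $H \geq 0$. I would present the argument in this order: quote the $Y_i'$ equations, perform the one-line cancellation, upgrade to the polynomial identity to cover the boundary, and conclude.
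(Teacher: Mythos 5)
Your computation is correct and is exactly the paper's proof: the authors also verify $\left( Y_1^{2 d_1} Y_2^{2 d_2} \right)^{'} = 2 d_1 d_2\, Y_1^{2 d_1} Y_2^{2 d_2} \Delta^2 H \geq 0$ and conclude immediately. Your extra remark about the boundary where some $Y_i$ vanishes is a harmless refinement but not needed, since the identity is polynomial and the paper states it in that form from the start.
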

\begin{proof}
    This is immediate from $\left( Y_1^{2 d_1} Y_2^{2 d_2} \right)^{'} = 2 d_1 d_2 Y_1^{2 d_1} Y_2^{2 d_2} \Delta^2 H \geq 0.$
\end{proof}
Geometrically, $Y_1^{d_1} Y_2^{d_2}$ corresponds to the volume of the principal orbit. In particular, at $H=0$ we have the unique maximal volume orbit. 

\subsection{$(Z, \Delta,H)$-coordinates} 
\label{SubsectionYDeltaHcoordinates} 

Let 
\begin{align*}
    Z = (d_1-1) Y_1^2 - (d_2 -1) Y_2^2.
\end{align*}
For trajectories in $\mathcal{S}$ we can recover $Y_1, Y_2$ via the constraint equation $\mathcal{S}_1=0$ from
\begin{align*}
    d_1 Z + n (d_2-1) Y_2^2 + \frac{d_1 d_2}{n} \Delta^2 & = \frac{n-1}{n}, \\
    n(d_1-1) Y_1^2 - d_2 Z + \frac{d_1 d_2}{n} \Delta^2 & = \frac{n-1}{n}. 
\end{align*}
Furthermore, the constraint equation implies that 
\begin{align*}
    Z^{'} = & \ \frac{2 d_1 d_2}{n} \Delta \left( (d_1-1) Y_1^2 ( \Delta H - \frac{1}{d_1} ) - (d_2 - 1) Y_2^2 ( \Delta H + \frac{1}{d_2} ) \right) \\
    = & \ \frac{2 d_1 d_2}{n} Z \Delta^2 H - \frac{2}{n} \Delta \left( (d_1-1) d_2 Y_1^2 + d_1 (d_2 -1) Y_2^2 \right) \\
    = & \ \frac{2 d_1 d_2}{n} Z \Delta^2 H - \frac{2}{n} \Delta \left( d_1 (d_1-1) Y_1^2 + d_2 (d_2 -1) Y_2^2 + (d_2-d_1) Z \right) \\    
    %= & \ \frac{2 d_1 d_2}{n} Z \Delta^2 H \\
    %& \ - \frac{2}{n} \Delta \left( d_1 (d_1-1) Y_1^2 + d_2 (d_2 -1) Y_2^2 + (d_2-d_1) ( (d_1 -1 ) Y_1^2 - (d_2 -1) Y_2^2 ) \right) \\
    = & \ \frac{2}{n} \Delta \left( d_1 d_2 Z \Delta H + \frac{d_1 d_2}{n} \Delta^2 - \frac{n-1}{n} + (d_1 -d_2) Z \right).    
\end{align*}

Thus, in $(Z, \Delta, H)$-coordinates, the set $\mathcal{S}$ is given by
\begin{align*}
   \mathcal{S} = \left\lbrace d_1 Z + \frac{d_1 d_2}{n} \Delta^2 \leq \frac{n-1}{n} \right\rbrace \cap \left\lbrace -d_2 Z + \frac{d_1 d_2}{n} \Delta^2 \leq \frac{n-1}{n} \right\rbrace \cap \left\lbrace H^2 \le 1 \right\rbrace
\end{align*}
and within $\mathcal{S}$ the Einstein ODE \eqref{EinsteinODEinHYDelta} is equivalent to 
\begin{align}
\label{EinsteinODEinZDeltaH}
    Z^{'} & = \frac{2}{n} \Delta \left( d_1 d_2 Z \Delta H + \frac{d_1 d_2}{n} \Delta^2 - \frac{n-1}{n} + (d_1 -d_2) Z \right), \\
    \nonumber
    \Delta^{'} & = \Delta H \left( \frac{d_1 d_2}{n} \Delta^2 - \frac{n-1}{n} \right) +Z, \\
    \nonumber
    H^{'} & = - \frac{1-H^2}{n} \left( d_1 d_2 \Delta^2 +1 \right).
\end{align}

\begin{remark}
\normalfont
    \label{BoundaryPreserved}
    Proposition \ref{EinsteinLocus} implies that $\mathcal{S}$ and its boundary are preserved. In fact, one can also check directly that both 
\begin{align*}
    d_1 Z + \frac{d_1 d_2}{n} \Delta^2 = \frac{n-1}{n} \ \text{and} \ -d_2 Z + \frac{d_1 d_2}{n} \Delta^2 = \frac{n-1}{n}
\end{align*}
are preserved. 
\end{remark}

\begin{comment}
For example, if $d_1 Z + \frac{d_1 d_2}{n} \Delta^2 = \frac{n-1}{n},$ then 
\begin{align*}
     Z^{'} & = \frac{2 d_1}{n} \Delta Z ( d_2 \Delta H + 1 ) \\
        \Delta^{'} & = (d_2 \Delta H + 1)Z. 
 \end{align*}
Thus, $\left( d_1 Z + \frac{d_1 d_2}{n} \Delta^2 - \frac{n-1}{n} \right)^{'} = \frac{2d_1 d_2}{n} \Delta Z \left((- d_2 \Delta H -1 + d_2 \Delta H +1 \right) = 0.$
\end{comment}

\begin{proposition}
\label{FixedPointsInHDeltaZ}
Restricted to $\mathcal{S}$, the fixed points of the Einstein ODE \eqref{EinsteinODEinZDeltaH} are given in $(Z, \Delta, H)$-coordinates by
\begin{enumerate}
    \item $p_1^+= \left( \frac{d_1-1}{d_1^2}, \frac{1}{d_1},1 \right)$ and $p_1^-= \left( \frac{d_1 -1}{d_1^2}, - \frac{1}{d_1}, -1 \right).$ These correspond to a smooth collapse of $S^{d_1}.$
    \item $p_2^+= \left( - \frac{d_2-1}{d_2^2}, - \frac{1}{d_2},1 \right)$ and $p_2^-= \left( - \frac{d_2 -1}{d_2^2}, \frac{1}{d_2}, -1 \right).$ These correspond to a smooth collapse of $S^{d_2}.$
    \item $\cone^{\pm}= \left( 0,0, \pm 1 \right).$ These correspond to the base points of the cone solution. 
    \item $q_1^{\pm}= \left( 0, \mp \sqrt{\frac{n-1}{d_1 d_2}}, \pm 1 \right), q_2^{\pm}= \left( 0, \pm \sqrt{\frac{n-1}{d_1 d_2}}, \pm 1 \right).$ These correspond to singular solutions.   
\end{enumerate}
\end{proposition}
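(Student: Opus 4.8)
The plan is to locate the fixed points directly in the $(Z,\Delta,H)$-coordinates, by solving the polynomial system obtained from setting the right-hand side of \eqref{EinsteinODEinZDeltaH} equal to zero, and then to verify that all the resulting points in fact lie in $\mathcal{S}$. First, since $d_1 d_2 \Delta^2 + 1 \geq 1 > 0$, the third equation $H' = 0$ forces $H^2 = 1$, so every fixed point has $H = \pm 1$. The map $(Z,\Delta,H) \mapsto (Z,-\Delta,-H)$ combined with time reversal $s \mapsto -s$ is a symmetry of \eqref{EinsteinODEinZDeltaH} that preserves $\mathcal{S}$ (whose defining inequalities involve only $\Delta^2$ and $H^2$) and exchanges the slices $\{H = 1\}$ and $\{H = -1\}$, sending $p_i^+ \mapsto p_i^-$, $\cone^+ \mapsto \cone^-$ and $q_i^+ \mapsto q_i^-$. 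Hence it suffices to determine the fixed points with $H = 1$.

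With $H = 1$ fixed, the equation $\Delta' = 0$ reads $Z = -\tfrac{\Delta}{n}\bigl(d_1 d_2 \Delta^2 - (n-1)\bigr)$, which expresses $Z$ as a function of $\Delta$. Substituting this into $Z' = 0$ turns the first equation of \eqref{EinsteinODEinZDeltaH} into a polynomial equation in $\Delta$ alone. It has the root $\Delta = 0$, which forces $Z = 0$ and gives $\cone^+$, and after dividing out the common factor $\Delta$ the remaining quartic factors — this is the one genuinely computational point — as
\begin{align*}
\bigl(d_1 d_2 \Delta^2 - (n-1)\bigr)\bigl(d_1 d_2 \Delta^2 + (d_1 - d_2)\Delta - 1\bigr) = 0.
\end{align*}
The first factor gives $\Delta^2 = \tfrac{n-1}{d_1 d_2}$, hence $Z = 0$, i.e.\ the points $q_1^+$ and $q_2^+$ for the two signs of $\Delta$. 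The second factor is a quadratic with discriminant $(d_1 - d_2)^2 + 4 d_1 d_2 = n^2$, so its roots are $\Delta = \tfrac{1}{d_1}$ and $\Delta = -\tfrac{1}{d_2}$; substituting back and simplifying with $n = d_1 + d_2$ gives $Z = \tfrac{d_1 - 1}{d_1^2}$ resp.\ $Z = -\tfrac{d_2 - 1}{d_2^2}$, i.e.\ $p_1^+$ and $p_2^+$. This accounts for all five fixed points with $H = 1$, and the symmetry produces the remaining five, for a total of the ten listed points. They are pairwise distinct, since at $H = 1$ their $\Delta$-coordinates $0,\ \pm\sqrt{\tfrac{n-1}{d_1 d_2}},\ \tfrac{1}{d_1},\ -\tfrac{1}{d_2}$ are distinct — the only nontrivial comparison, $\tfrac{1}{d_i} \neq \sqrt{\tfrac{n-1}{d_1 d_2}}$, reduces to $n(d_i - 1) \neq 0$, which holds as $d_i \geq 2$.

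It remains to check $\mathcal{S}$-membership and to read off the geometric meaning. Using the relations that recover $Y_1^2, Y_2^2$ from the constraint $\mathcal{S}_1 = 0$, the two inequalities defining $\mathcal{S}$ are equivalent to $Y_1^2 \geq 0$ and $Y_2^2 \geq 0$; one computes $Y_i^2 = \tfrac{n-1}{n^2(d_i - 1)} > 0$ at $\cone^{\pm}$, $(Y_1^2, Y_2^2) = (\tfrac{1}{d_1^2}, 0)$ at $p_1^{\pm}$ (and symmetrically at $p_2^{\pm}$), and $Y_1^2 = Y_2^2 = 0$ at $q_i^{\pm}$, so all ten points lie in $\mathcal{S}$. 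The stated identifications then follow by passing to the metric variables via Proposition \ref{RecoverMetricFromHXY} and taking limits: the smooth-collapse solution \eqref{InitialSmoothCollapseSDOneFcoords} of $S^{d_1}$ satisfies $(Z,\Delta,H) \to p_1^+$ as $t \to 0$, the terminal collapse \eqref{TerminalSmoothCollapseSDTwoFcoords} of $S^{d_2}$ yields $p_i^-$, the base points of the cone solution (the limits $t \to 0$, $t \to \pi$ of Definition \ref{ConeSolution}) are $\cone^{\pm}$, and the $q_i^{\pm}$ arise from the explicit singular solutions with $Y_1 \equiv Y_2 \equiv 0$.

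The main obstacle is not conceptual but organizational: everything is elementary once one spots the factorization of the quartic above, and the only care required is to set up the case analysis so that no fixed point is overlooked — which is why one first forces $H^2 = 1$, then eliminates $Z$ using $\Delta' = 0$, and finally counts the roots of a single one-variable polynomial — and to confirm that the boundary points $p_i^{\pm}$, $q_i^{\pm}$ genuinely lie in $\mathcal{S}$ rather than merely on an extension of the vector field beyond it.
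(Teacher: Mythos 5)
Your proposal is correct and follows essentially the same route as the paper: force $H^2=1$ from the third equation, use $\Delta'=0$ to eliminate $Z$, and reduce to a one-variable polynomial whose factorization (equivalently, the paper's case split $Z=0$ versus $\Delta H\in\{\tfrac{1}{d_1},-\tfrac{1}{d_2}\}$) yields exactly the listed points. The extra checks you include (the $\pm$ symmetry, distinctness, and $\mathcal{S}$-membership via $Y_i^2\ge 0$) are all consistent with the paper's setup.
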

\begin{proof}
Clearly, any fixed point satisfies $H^2=1.$ If $\Delta = 0,$ then $Z=0.$

Otherwise $\Delta H \neq 0$ and $\Delta^{'}=0$ implies $\frac{d_1 d_2}{n} \Delta^2 - \frac{n-1}{n} = - \frac{Z}{\Delta H}.$ $Z^{'}= 0$ thus yields $Z \left( d_1 d_2 \Delta H - \frac{1}{\Delta H} + d_1 - d_2 \right) = 0.$

If $Z=0,$ then $\Delta^2 = \frac{n-1}{d_1 d_2}.$ Otherwise $\Delta H = \frac{1}{d_1}$ or $-\frac{1}{d_2}$ and one computes the value of $Z$ from $Z = \Delta H \ \left( \frac{n-1}{n} - \frac{d_1 d_2}{n} ( \Delta H )^2 \right).$
 
 With regard to the smoothness conditions, note for example that the smoothness conditions \eqref{InitialSmoothCollapseSDOneFcoords}, \eqref{TerminalSmoothCollapseSDTwoFcoords} correspond to the fixed points $p_1^+,$ $p_2^-.$ 

One obtains the base points of the cone solution by converting the cone solution of definition \ref{ConeSolution} into $(Z, \Delta, H)$-coordinates and by taking the limits $s \to \pm \infty$ (which corresponds to $t \searrow 0,$ $t \nearrow \pi,$ respectively).

Solutions emanating from or converging to a fixed point in (d) correspond to incomplete metrics. 
\end{proof}

Note that all fixed points lie on the boundary of $\mathcal{S}.$

\begin{proposition}\label{LinearizedBehaviour}
    All fixed points of the Einstein ODE \eqref{EinsteinODEinZDeltaH} within $\mathcal{S}$ are hyperbolic. Furthermore:
    \begin{enumerate}
        \item $q_i^+$ are sources.
        \item $q_i^-$ are sinks.
        \item $p_i^{\pm}$ are saddles.
        \item The unstable manifolds of $p_i^+$ are $2$-dimensional and intersect $\partial \mathcal{S}$ transversally near $p_i^+$.
        \item The stable manifolds of $p_i^-$ are $2$-dimensional and intersect $\partial \mathcal{S}$ transversally near $p_i^-$.
        \item $\cone^{\pm}$ are saddle points of the ODE \eqref{EinsteinODEinZDeltaH}. Furthermore,
        \begin{itemize}
            \item[$\bullet$] $\cone^+$ is a sink for the 2-dimensional ODE obtained by restricting \eqref{EinsteinODEinZDeltaH} to the invariant set $\mathcal{S}\cap \{H=1\}$.
            \item[$\bullet$] $\cone^-$ is a source for the 2-dimensional ODE obtained by restricting \eqref{EinsteinODEinZDeltaH} to the invariant set $\mathcal{S}\cap \{H=-1\}$.
        \end{itemize}
    \end{enumerate}
\end{proposition}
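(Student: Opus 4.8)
The plan is to linearize the system \eqref{EinsteinODEinZDeltaH} at each of the fixed points of Proposition \ref{FixedPointsInHDeltaZ}, using two structural features to keep the bookkeeping light. First, every fixed point lies at $H=\pm 1$, and since $H' = -\tfrac{1-H^2}{n}(d_1 d_2 \Delta^2+1)$ vanishes to first order at $H=\pm1$ with no zeroth order $(Z,\Delta)$--dependence, the Jacobian of \eqref{EinsteinODEinZDeltaH} at a fixed point $(Z_0,\Delta_0,\pm1)$ is block lower triangular: its $H$--row is $\bigl(0,\,0,\,\tfrac{2H}{n}(d_1 d_2 \Delta_0^2+1)\bigr)$, so one eigenvalue is $\mu_H = \tfrac{2H}{n}(d_1 d_2 \Delta_0^2+1)$, with $\operatorname{sign}(\mu_H)=\operatorname{sign}(H)$ and with a (generalized) eigenvector transverse to $\{H=\pm1\}$, while the remaining two eigenvalues are precisely those of the $2\times2$ Jacobian $A$ of the ODE obtained by restricting \eqref{EinsteinODEinZDeltaH} to the invariant plane $\{H=\pm1\}$. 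Second, the system carries the time--reversing symmetry $(Z,\Delta,H,s)\mapsto(Z,-\Delta,-H,-s)$, which interchanges the fixed points at $H=1$ with those at $H=-1$ (with $p_i^+\leftrightarrow p_i^-$, $q_i^+\leftrightarrow q_i^-$, $\cone^+\leftrightarrow\cone^-$), and the relabeling $d_1\leftrightarrow d_2$, $(Z,\Delta)\mapsto(-Z,-\Delta)$, which interchanges the index $1$ and index $2$ fixed points. So it suffices to analyze $\cone^+$, $p_1^+$ and $q_1^+$; then time reversal converts the source statement at $q_i^+$ into the sink statement at $q_i^-$ and carries the unstable manifold of $p_i^+$ to the stable manifold of $p_i^-$, and the relabeling handles the index $2$ points.

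The heart of the argument is to compute $\operatorname{tr} A$ and $\det A$ at these three points. At $\cone^+$ (where $Z_0=\Delta_0=0$) I expect $\operatorname{tr} A = -\tfrac{n-1}{n}<0$ and $\det A = \tfrac{2(n-1)}{n^2}>0$, so both eigenvalues of $A$ have negative real part; together with $\mu_H=\tfrac2n>0$ this makes $\cone^+$ a sink for the restricted ODE on $\{H=1\}$ and a hyperbolic saddle of \eqref{EinsteinODEinZDeltaH}. At $p_1^+$ I expect $\operatorname{tr} A = \tfrac{3-d_1}{d_1}$ and $\det A = -\tfrac{2(d_1-1)}{d_1^2}<0$, so $A$ has eigenvalues $\tfrac{2}{d_1}>0$ and $-\tfrac{d_1-1}{d_1}<0$; with $\mu_H=\tfrac{2}{d_1}>0$ this exhibits $p_1^+$ as a hyperbolic saddle whose unstable manifold is $2$--dimensional. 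At $q_1^+$ (where $d_1 d_2 \Delta_0^2 = n-1$ and $\mu_H=2$) I would use the identity $\Delta_0(d_1 d_2 \Delta_0+d_1-d_2)=(n-1)+(d_1-d_2)\Delta_0$ to obtain $\det A = \tfrac{4(n-1)}{n^2}\bigl((n-2)+(d_1-d_2)\Delta_0\bigr)$ and $\operatorname{tr} A = \tfrac{2}{n}\bigl(2(n-1)+(d_1-d_2)\Delta_0\bigr)$, and then invoke $d_1,d_2\ge2$, which forces $d_1 d_2 \ge 2n-4 > n-1$ and hence $(d_1-d_2)^2\tfrac{n-1}{d_1 d_2}<(n-2)^2$, i.e.\ $|(d_1-d_2)\Delta_0|<n-2$; so $\det A,\operatorname{tr} A>0$ and $q_1^+$ is a hyperbolic source.

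For the transversality assertions (d) and (e) I would again reduce to $p_1^+$. Near $p_1^+$ the boundary $\partial\mathcal{S}$ is the union of parts of the two invariant faces $\{H=1\}$ and $G_1:=\{d_1 Z+\tfrac{d_1 d_2}{n}\Delta^2=\tfrac{n-1}{n}\}$ (cf.\ Remark \ref{BoundaryPreserved}), $p_1^+$ lies on no other boundary face by a direct check, and $G_1$ and $\{H=1\}$ meet transversally along $G_1\cap\{H=1\}$. Transversality of the unstable manifold $M_1^+$ to $\{H=1\}$ is immediate, since the $\mu_H$--eigendirection is transverse to $\{H=1\}$ and unstable. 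For $G_1$ I would note that $M_1^+\cap\{H=1\}$ is the $1$--dimensional unstable manifold of $p_1^+$ for the restricted ODE on $\{H=1\}$, hence tangent to the $\tfrac{2}{d_1}$--eigenvector of $A$, whereas $G_1\cap\{H=1\}$ is an invariant curve through $p_1^+$ whose induced flow linearizes to $-\tfrac{d_1-1}{d_1}$ at $p_1^+$; since these tangent lines differ, $M_1^+\not\subseteq G_1$, and because $M_1^+$ and $G_1$ are both $2$--dimensional this forces a transversal intersection near $p_1^+$.

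I expect the main obstacle to be precisely this transversality step: each $p_i^\pm$ sits on the edge $G_i\cap\{H=\pm1\}$ where two boundary faces of $\mathcal{S}$ meet, so rather than merely counting eigenvalue signs one must control how the $2$--dimensional (un)stable manifold meets each adjacent face, and the reduction above (comparing the tangent lines of the stable and unstable invariant curves of $p_i^\pm$ inside $\{H=\pm1\}$) is what makes it tractable. A mild subtlety worth flagging is the resonance $\mu_H=\tfrac{2}{d_1}$ with the positive eigenvalue of $A$ at $p_i^\pm$: the unstable eigenspace is nonetheless a genuine $2$--plane, for otherwise its unique invariant line would lie in $\{H=\pm1\}$ and force $M_i^\pm\subseteq G_i$, which the previous step excludes; this guarantees the unstable manifold really is $2$--dimensional. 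The remaining eigenvalue computations are routine, if somewhat lengthy, and I do not anticipate difficulty there.
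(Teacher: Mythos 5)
Your proposal is correct and follows the same route as the paper, whose proof is simply to compute the linearization of \eqref{EinsteinODEinZDeltaH} at each fixed point; your traces, determinants and the resulting eigenvalues (e.g.\ $\tfrac{2}{d_1}$ with multiplicity two and $-\tfrac{d_1-1}{d_1}$ at $p_1^+$) agree with the paper's stated values, and your use of the block-triangular structure in $H$ and the symmetries $(Z,\Delta,H,s)\mapsto(Z,-\Delta,-H,-s)$, $d_1\leftrightarrow d_2$ is just an efficient organization of that computation. The only quibble is immaterial: the $2$-dimensionality of the unstable manifold at $p_i^+$ already follows from the algebraic multiplicity of the positive eigenvalues (and in fact the geometric multiplicity is $2$, the paper exhibiting the transverse eigenvector $(\tfrac{d_1-1}{d_1^2},0,1)$), so your fallback argument for the resonance $\mu_H=\tfrac{2}{d_1}$ is not needed.
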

\begin{proof}
    This is obtained by computing the linearization of \eqref{EinsteinODEinZDeltaH}.

    For example, at $p_1^+$ the linearization has eigenvalues $\frac{2}{d_1}$ with multiplicity $2$ and $-\frac{d_1-1}{d_1}$ with multiplicity $1$. The tangent space of the unstable manifold is spanned by $(
        1+\frac{d_1-d_2}{d_1 n}, 1, 0 )$ and $(\frac{d_1-1}{d_1^2}, 0, 1)$ and the tangent space of the stable manifold is spanned by $(-\frac{2}{n}\frac{d_2}{d_1},1,0).$
\end{proof}

\begin{comment}
The linearization is given by 
\begin{equation*}
\begin{pmatrix}
    \frac{2}{n}\Delta(d_1 d_2 \Delta H + (d_1-d_2)) & \frac{2}{n}(2d_1 d_2 Z \Delta H + 3\frac{d_1 d_2}{n} \Delta^2 - \frac{n-1}{n} + (d_1-d_2)Z ) & \frac{2}{n} d_1 d_2 Z \Delta^2 \\
    1 & H (3 \frac{d_1 d_2}{n} \Delta^2 - \frac{n-1}{n}) & \Delta(\frac{d_1 d_2}{n}\Delta^2 - \frac{n-1}{n}) \\
    0 & -\frac{2}{n}d_1 d_2 (1-H^2)\Delta & \frac{2}{n}H(d_1 d_2 \Delta^2 +1)
\end{pmatrix}
\end{equation*}
\end{comment}

\begin{remark} \normalfont
    The dimensions in Proposition \ref{LinearizedBehaviour} (d)-(e) are in agreement with section \ref{SectionEinsteinMetricsOnSpheresIntro},
    with the one-parameter family of solutions corresponding to \eqref{InitialSmoothCollapseSDOneFcoords} foliating the part of the $2$-dimensional unstable manifold of $p_1^+$ in the interior of $\mathcal{S}$.
\end{remark}

Furthermore, we have the following explicit solutions:

\begin{proposition}
    \label{SpecialSolutions}
    In $(Z, \Delta, H)$-coordinates, some special solutions to the Einstein ODE \eqref{EinsteinODEinZDeltaH} are given by:
    \begin{enumerate}
        \item The \textit{cone solution} $\cone(h) := (0,0,h)$,
        
        where $h(s)= -\tanh(s/n)$.
        \item The \textit{singular solution} $q_1^+(h)=q_2^-(h) := (0, - \sqrt{\frac{n-1}{d_1 d_2}} , h)$, 
        
        where $h(s)= -\tanh(s)$.
        \item The \textit{singular solution} $q_2^+(h)=q_1^-(h) := (0, \sqrt{\frac{n-1}{d_1 d_2}}, h)$, 
        
        where $h(s)= -\tanh(s)$.
    \end{enumerate}
\end{proposition}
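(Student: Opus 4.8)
The plan is pure verification: substitute each of the three candidate curves into the Einstein ODE \eqref{EinsteinODEinZDeltaH} and check that the three scalar equations hold. The common feature of all three candidates is that $Z \equiv 0$ and $\Delta$ is constant, so the first two equations of \eqref{EinsteinODEinZDeltaH} reduce to verifying that their right-hand sides vanish identically, while the last equation becomes the autonomous scalar ODE $h' = -\tfrac{1-h^2}{n}\bigl(d_1 d_2 \Delta^2 + 1\bigr)$, whose solution through $h(0)=0$ is immediate because translation in $s$ is a symmetry. Along the way one also records that each curve lies in $\mathcal{S}$ (so that \eqref{EinsteinODEinZDeltaH} is genuinely the Einstein ODE there), using the description of $\mathcal{S}$ in $(Z,\Delta,H)$-coordinates.

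For (a), setting $Z = \Delta = 0$ makes the right-hand sides of the $Z'$- and $\Delta'$-equations manifestly zero, and the $H'$-equation becomes $h' = -\tfrac{1-h^2}{n}$, solved by $h(s) = -\tanh(s/n)$ since $\tfrac{d}{ds}(-\tanh(s/n)) = -\tfrac1n\operatorname{sech}^2(s/n) = -\tfrac{1-h^2}{n}$. To identify this with the cone solution of Definition \ref{ConeSolution}, observe that for $f_i(t)=\sqrt{\tfrac{d_i-1}{n-1}}\sin t$ the logarithmic derivatives $\dot f_i/f_i = \cot t$ agree, so $\Delta = X_1 - X_2 = 0$, while $(d_i-1)Y_i^2 = (d_i-1)\mathcal{L}^2/f_i^2 = (n-1)\mathcal{L}^2/\sin^2 t$ is independent of $i$, giving $Z = 0$; thus $(0,0,h)$ is exactly the $(Z,\Delta,H)$-form of Definition \ref{ConeSolution}, and the point $(0,0,0)$ lies in the interior of $\mathcal{S}$ since the recovered $Y_i^2 = \tfrac{n-1}{n^2(d_i-1)}$ are positive.

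For (b) and (c), set $\Delta = \mp\sqrt{(n-1)/(d_1 d_2)}$, so that $\tfrac{d_1 d_2}{n}\Delta^2 - \tfrac{n-1}{n} = 0$. Together with $Z = 0$ this forces the bracket in the $Z'$-equation and the entire $\Delta'$-equation to vanish, so $Z$ and $\Delta$ stay constant; the $H'$-equation becomes $h' = -\tfrac{1-h^2}{n}(d_1 d_2\Delta^2 + 1) = -(1-h^2)$, solved by $h(s) = -\tanh s$. Here the recovered orbit is degenerate ($Y_2 = 0$ in case (b), $Y_1 = 0$ in case (c)), which places these curves on $\partial\mathcal{S}$; letting $s \to \mp\infty$ one sees they connect the fixed points $q_1^+,q_2^-$ resp. $q_2^+,q_1^-$ of Proposition \ref{FixedPointsInHDeltaZ}, which justifies the labels $q_1^+(h)=q_2^-(h)$ and $q_2^+(h)=q_1^-(h)$ and the name "singular solution''.

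There is no real obstacle: the statement is a routine computation, and the only steps needing a moment's care are fixing the integration constant in each scalar ODE (legitimate since the $h$-equation is autonomous) and matching case (a) with the coordinate-free Definition \ref{ConeSolution}.
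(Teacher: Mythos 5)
Your verification is correct and is exactly the routine substitution the paper has in mind (the paper states this proposition without proof). One small inaccuracy in your aside on (b) and (c): with $Z=0$ and $\frac{d_1d_2}{n}\Delta^2=\frac{n-1}{n}$ the constraint $\mathcal{S}_1=0$ forces \emph{both} $Y_1=0$ and $Y_2=0$ (since $d_1,d_2\ge 2$), not just one of them; this does not affect your argument, as the curves still lie on $\partial\mathcal{S}$ and connect $q_1^+$ to $q_2^-$ resp.\ $q_2^+$ to $q_1^-$ as you say.
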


\begin{definition}
    Let $M_i^+$ (resp. $M_i^-$) denote the intersection of the unstable manifold of $p_i^+$ (resp. the stable manifold of $p_i^-$) with $\mathcal{S}$.
\end{definition}

For example, the one-parameter family of solutions with initial condition \eqref{InitialSmoothCollapseSDOneFcoords} parametrizes the trajectories in $M_1^+ \cap \operatorname{int}(\mathcal{S}).$

\begin{corollary}
\label{UniqueIntersectionWithBoundary}
$M^{+}_i$ (resp. $M_i^-$) intersects the boundary parts $\partial\mathcal{S}\cap \{H^2 < 1\}$ and $\partial\mathcal{S}\cap \{H =  1\}$ (resp. $\partial\mathcal{S}\cap \{H =  -1\}$) in a single trajectory each.
\end{corollary}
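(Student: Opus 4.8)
The plan is to analyze the intersection of $M_i^+$ with the two invariant boundary pieces $\partial\mathcal{S} \cap \{H = 1\}$ and $\partial\mathcal{S} \cap \{H^2 < 1\}$ separately, using the hyperbolicity statement of Proposition \ref{LinearizedBehaviour} together with the fact that these boundary pieces are themselves invariant under the flow. I will treat $M_i^+$; the case of $M_i^-$ follows by reversing the Einstein ODE \eqref{EinsteinODEinZDeltaH}, which (as recorded in Proposition \ref{EinsteinLocus}) swaps the roles of sources/sinks and of the $H = 1$ and $H = -1$ faces.

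\emph{Step 1: the slice $H = 1$.} By Proposition \ref{LinearizedBehaviour}(f), $\cone^+$ is a sink for the restriction of \eqref{EinsteinODEinZDeltaH} to the invariant plane $\mathcal{S}\cap\{H=1\}$; in particular within this slice the only trajectories leaving $p_i^+$ form the unstable manifold of $p_i^+$ inside $\{H=1\}$, which by the eigenvalue computation in the proof of Proposition \ref{LinearizedBehaviour} (e.g.\ the eigenvalue $\tfrac{2}{d_1}$ of multiplicity $2$ versus $-\tfrac{d_1-1}{d_1}$ at $p_1^+$) is one-dimensional: exactly one unstable eigendirection is tangent to $\{H=1\}$. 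Hence $M_i^+ \cap \{H=1\}$ consists of precisely one trajectory emanating from $p_i^+$. It remains to see that this trajectory lies in $\partial\mathcal{S}$: the explicit tangent vectors given in the proof of Proposition \ref{LinearizedBehaviour} show that the unstable plane at $p_i^+$ meets $\{H=1\}$ in the direction pointing into the boundary face $\{ d_i Z + \frac{d_1 d_2}{n}\Delta^2 = \frac{n-1}{n}\}$ (at $p_1^+$ this is the span of $(1+\tfrac{d_1-d_2}{d_1 n},1,0)$), and by Remark \ref{BoundaryPreserved} this face is invariant, so the trajectory stays on it.

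\emph{Step 2: the slice $\{H^2 < 1\}$.} On the boundary face $\{d_1 Z + \frac{d_1 d_2}{n}\Delta^2 = \frac{n-1}{n}\}$ one of $Y_1, Y_2$ vanishes — say the face where $Y_1 = 0$, equivalently $n(d_1-1)Y_1^2 = d_2 Z + \frac{n-1}{n} - \frac{d_1 d_2}{n}\Delta^2$ vanishes — and on this $2$-dimensional invariant face the flow has (by Proposition \ref{FixedPointsInHDeltaZ} and Proposition \ref{LinearizedBehaviour}) only the fixed points $p_i^\pm$ and possibly $q_j^\pm$, $\cone^\pm$, all of which are hyperbolic within the face. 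The transversality assertion of Proposition \ref{LinearizedBehaviour}(d) says that near $p_i^+$ the $2$-dimensional unstable manifold meets $\partial\mathcal{S}$ transversally; since $\partial\mathcal{S}\cap\{H^2<1\}$ near $p_i^+$ is a single smooth face and the unstable manifold is $2$-dimensional, the intersection $M_i^+\cap(\partial\mathcal{S}\cap\{H^2<1\})$ is locally a single curve through $p_i^+$. Globally, this curve is a union of trajectories of the (planar) restricted flow on the face; to see it is a single trajectory, note the unstable manifold is a connected immersed surface, its intersection with the invariant face is a $1$-manifold one of whose ends limits on $p_i^+$ along the one eigendirection transverse to $\{H=1\}$ inside the face (again from the eigenvector data), and $H$ is strictly monotone along it by Proposition \ref{EinsteinLocus}; monotonicity of $H$ prevents the $1$-manifold from closing up or branching, so it is a single trajectory.

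\emph{Main obstacle.} The delicate point is Step 2: ruling out that $M_i^+$ meets the interior of the face $\{H^2<1\}\cap\partial\mathcal{S}$ in \emph{several} trajectories, i.e.\ that the $1$-dimensional intersection $M_i^+ \cap \partial\mathcal{S}$ stays connected away from $p_i^+$. The clean way is to observe that $\partial\mathcal{S}$ is itself flow-invariant (Proposition \ref{EinsteinLocus}, Remark \ref{BoundaryPreserved}), so $M_i^+\cap\partial\mathcal{S}$ is an \emph{invariant} subset of the $2$-dimensional surface $M_i^+$; being also the intersection of $M_i^+$ with a codimension-one invariant set, and $M_i^+$ being the image of a half-space under the unstable-manifold embedding foliated by trajectories, this intersection is a union of whole trajectories, and the local picture near $p_i^+$ forces exactly one of them to lie on each boundary piece. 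I would spell this out by parametrizing $M_i^+$ by the flow together with the $1$-parameter of unstable directions and checking directly which parameter values land on $\partial\mathcal{S}$; the eigenvector computation already in the proof of Proposition \ref{LinearizedBehaviour} is exactly what pins this down.
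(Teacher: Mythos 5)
Your proposal follows essentially the same route as the paper, whose entire proof consists of the two observations you isolate in your final paragraph: each boundary part is invariant in both time directions, so any trajectory of $M_i^+$ that meets one is wholly contained in it, and the linearization at $p_i^+$ leaves a unique emanating direction tangent to each part. One side-claim in your Step 1 is false, though fortunately unnecessary: the in-slice unstable direction $(1+\frac{d_1-d_2}{d_1 n},1,0)$ is \emph{not} tangent to the parabolic face $\{d_1 Z+\frac{d_1 d_2}{n}\Delta^2=\frac{n-1}{n}\}$ --- the tangent to that parabola at $p_1^+$ is the stable direction $(-\frac{2}{n}\frac{d_2}{d_1},1,0)$ --- and the trajectory $M_1^+\cap\{H=1\}$ is $\gamma_1^{\text{RF}}$, which runs through the interior of the slice and converges to $\cone^+$ by Proposition \ref{RicciFlatSystem}. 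No such argument is needed, since the whole top face $\mathcal{S}\cap\{H=1\}$ is already part of $\partial\mathcal{S}$. Finally, in Step 2 the faces are swapped: $d_1 Z+\frac{d_1 d_2}{n}\Delta^2=\frac{n-1}{n}$ is the face $\{Y_2=0\}$, not $\{Y_1=0\}$; this does not affect the structure of your argument.
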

 \begin{proof}
    Since the boundary is invariant, any trajectory in $M_i^+$ that intersects a boundary part is in fact contained in that boundary part. The linearization at $p_i^+$ implies that there is a unique direction from $p_i^+$ into that boundary part.
\end{proof} 

In fact, Proposition \ref{QuadrantRotation} implies that the trajectory in $M_i^+ \cap \partial\mathcal{S}\cap \{H^2 < 1\}$ connects $p_i^+$ and $q_i^-.$ 

Before we continue, we define a helpful auxiliary transformation on $\mathcal{S}$:

\begin{definition}\label{defBarInvolution}
    In $(Z, \Delta, H)$-coordinates, consider the involution given by $\overline{(Z, \Delta, H)} =  (Z, -\Delta, H)$. 
    
    For a set $X$, we denote the image of $X$ under this map by $\overline{X}$.
\end{definition}

\begin{lemma}
\label{IntersectionsAreSolutions}
Let $d_1, d_2\ge 2$ be natural numbers. Modulo scaling and with respect to the standard action, $SO(d_1+1)\times SO(d_2+1)$-invariant Einstein metrics on $S^{d_1 + d_2 +1}$ are in one-to-one correspondence with points in $M_1^+ \cap \overline{M_2^+} \cap \{H=0\}$.
\end{lemma}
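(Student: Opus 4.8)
The plan is to unwind the chain of coordinate changes and identifications made above. The starting point is Section~\ref{SectionEinsteinMetricsOnSpheresIntro}: an $SO(d_1+1)\times SO(d_2+1)$-invariant Einstein metric on $S^{d_1+d_2+1}$, written away from the singular orbits as $dt^2 + f_1^2 g_{S^{d_1}} + f_2^2 g_{S^{d_2}}$, is the same thing as a solution of \eqref{DerivativeL}, \eqref{DerivativeTrL} (equivalently, by the computation preceding Proposition~\ref{RecoverMetricFromHXY}, of \eqref{EinsteinODEinHXYequ} together with $\mathcal{S}_1 = \mathcal{S}_2 = 0$) that satisfies the smooth-collapse boundary conditions \eqref{InitialSmoothCollapseSDOneFcoords} and \eqref{TerminalSmoothCollapseSDTwoFcoords} at the two ends of its interval of definition. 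By Proposition~\ref{RecoverMetricFromHXY} and the passage to $(Z,\Delta,H)$-coordinates, such solutions correspond exactly to trajectories of \eqref{EinsteinODEinZDeltaH} in $\mathcal{S}$ which, as $s\to -\infty$, converge to the fixed point $p_1^+$ (the smooth collapse of $S^{d_1}$, by Proposition~\ref{FixedPointsInHDeltaZ}(a)) and, as $s\to +\infty$, converge to $p_2^-$ (the smooth collapse of $S^{d_2}$). The Einstein constant $\lambda$ is fixed only up to scaling, which is why the correspondence is "modulo scaling."

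Next I would translate the endpoint conditions into membership in the (un)stable manifolds. A trajectory in $\mathcal{S}$ converging to $p_1^+$ backwards in $s$ is by definition a trajectory in the unstable manifold of $p_1^+$, i.e.\ lies in $M_1^+$; likewise converging to $p_2^-$ forwards means lying in the stable manifold of $p_2^-$, i.e.\ in $M_2^-$. Here one must check these are the \emph{interior} trajectories, since the boundary parts of $\mathcal{S}$ are invariant (Proposition~\ref{EinsteinLocus}) and carry the degenerate/singular solutions; by Proposition~\ref{RecoverMetricFromHXY} a genuine metric requires $Y_i > 0$ and $H^2 < 1$, which forces the trajectory into $\operatorname{int}(\mathcal{S})$ for $s$ in the open interval, consistent with the remark after Proposition~\ref{LinearizedBehaviour} that the interior of $M_1^+$ is foliated by the family \eqref{InitialSmoothCollapseSDOneFcoords}. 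So Einstein metrics correspond to trajectories lying in $M_1^+\cap M_2^-$.

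The last step is to normalize the parametrization to the slice $\{H=0\}$ and to replace $M_2^-$ by $\overline{M_2^+}$. Since $H$ is strictly decreasing along any non-boundary trajectory (Proposition~\ref{EinsteinLocus}), with $H\to 1$ at $p_1^+$ and $H\to -1$ at $p_2^-$, each such trajectory meets $\{H=0\}$ in exactly one point; conversely a trajectory is determined by any one of its points. Hence trajectories in $M_1^+\cap M_2^-$ are in bijection with points of $M_1^+\cap M_2^-\cap\{H=0\}$. Finally, I claim $M_2^- = \overline{M_1'}$ in the appropriate sense — more precisely that the involution $\overline{\phantom{X}}$ of Definition~\ref{defBarInvolution} combined with time reversal $s\mapsto -s$ is a symmetry of \eqref{EinsteinODEinZDeltaH}: inspecting the ODE, sending $\Delta\mapsto -\Delta$, $H\mapsto -H$, $s\mapsto -s$ leaves $Z',\Delta',H'$ invariant (each term is checked directly — $Z'$ and $\Delta'$ pick up two sign changes and $H'$ none, against the sign change from $ds$), and under this symmetry $p_2^-$ maps to $p_2^+$. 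Therefore the stable manifold of $p_2^-$ is the image under $\overline{\phantom{X}}$ (composed with $H\mapsto -H$) of the unstable manifold of $p_2^+$; restricted to $\{H=0\}$ the $H$-sign change is invisible, so $M_2^-\cap\{H=0\} = \overline{M_2^+}\cap\{H=0\}$. Combining the three steps gives the stated one-to-one correspondence.

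I expect the main obstacle to be the bookkeeping in this last symmetry step: one has to be careful that the bar-involution as literally defined only flips $\Delta$, whereas the dynamical symmetry relating the stable manifold of $p_2^-$ to the unstable manifold of $p_2^+$ also reverses time and flips $H$, and one must verify that on the slice $\{H=0\}$ these extra operations wash out so that $M_1^+\cap M_2^-\cap\{H=0\}$ really does coincide with $M_1^+\cap\overline{M_2^+}\cap\{H=0\}$. The remaining ingredients — strict monotonicity of $H$ giving the unique intersection with $\{H=0\}$, and positivity of $Y_i$ forcing interior trajectories — are direct consequences of Propositions~\ref{EinsteinLocus}, \ref{RecoverMetricFromHXY}, and \ref{FixedPointsInHDeltaZ} already established above.
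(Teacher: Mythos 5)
Your proposal is correct and follows essentially the same route as the paper: pass to the $(Z,\Delta,H)$-system, identify smooth collapse with the fixed points $p_1^+$ and $p_2^-$, use strict monotonicity of $H$ to parametrize trajectories by their unique point in $\{H=0\}$, and convert $M_2^-$ into $\overline{M_2^+}$ via the symmetry $(Z,\Delta,H,s)\mapsto(Z,-\Delta,-H,-s)$, which is exactly the paper's $\sigma$ combined with time reversal. The only detail the paper adds is that a point of $M_1^+\cap\overline{M_2^+}\cap\{H=0\}$ automatically lies in the interior of $\mathcal{S}$ (via Corollary \ref{UniqueIntersectionWithBoundary}), which is needed for the converse direction of the bijection; your interiority check only covers the forward direction.
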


\begin{proof}
Let $g$ be an $SO(d_1+1)\times SO(d_2+1)$-invariant Einstein metric on $S^{d_1+d_2+1}$. Recall that then the Einstein $\lambda$ constant is necessarily positive, \cite{BerardSurNouvellesVariteEinstein}. 
    
As explained in section \ref{SectionPreliminaries}, we may represent the metric as $dt^2 + f_1^2 g_{S^{d_1}} + f_2^2 g_{S^{d_2}}$ for two non-negative functions $f_1, f_2$ on $S^{d_1+d_2+1}$ with appropriate smoothness conditions at the singular orbits and $f_1, f_2 > 0$ away from the singular orbits. By choosing a $t$-direction from one singular orbit to the other, the Einstein condition becomes an ODE for $(f_1, \Dot{f}_1, f_2, \Dot{f}_2)$. Since $\lambda >0$, we may convert the Einstein equations for $(f_1, \Dot{f}_1, f_2, \Dot{f}_2)$ into the Einstein ODE \eqref{EinsteinODEinZDeltaH} in $(Z, \Delta, H)$-coordinates.
    
Note that the choice of the $t$-direction corresponds to the symmetry of the Einstein ODE \eqref{EinsteinODEinZDeltaH} given in $(Z, \Delta, H)$-coordinates by $\sigma:(Z, \Delta, H) \mapsto (Z, -\Delta, -H)$. Furthermore, smoothness at a singular orbit corresponds to a trajectory emanating from or converging to one of the fixed points $p_i^{\pm}$ of Proposition \ref{FixedPointsInHDeltaZ}. 

Since the manifold is assumed to be a sphere, the principal orbit $S^{d_1} \times S^{d_2}$ must collapse to different factors at the singular orbits, i.e., both $S^{d_1}$ and $S^{d_2}$ occur as singular orbits. Thus we obtain either a trajectory from $p_1^+$ to $p_2^-$ or from $p_2^+$ to $p_1^-$. We choose the $t$-direction in the way that places us in the first case. 

This gives us a one-to-one correspondence between $SO(d_1+1)\times SO(d_2+1)$-invariant Einstein metrics on $S^{d_1+d_2+1}$ with a fixed Einstein constant $\lambda>0$ and trajectories of the Einstein ODE \eqref{EinsteinODEinZDeltaH} from $p_1^+$ to $p_2^-$ that are contained in the interior of $\mathcal{S}$, which is the region of the $(Z,\Delta,H)$-coordinate system corresponding to $f_1, f_2 > 0$.
 
By monotonicity of $H$, any trajectory from $H=1$ to $H=-1$ is uniquely determined by its single intersection point with the slice $\{H=0\}$. Conversely, any point $p$ in $\{H=0\}$ uniquely determines a trajectory of the Einstein ODE. By definition of stable and unstable manifolds, this trajectory emanates from $p_1^+$ if and only if $p \in M_1^+$ and converges to $p_2^-$ if and only if $p\in M_2^-$. Furthermore, note that $\sigma$ interchanges $M_i^+$ and $M_i^-$ and that at $\{H=0\}$ we have $\sigma((Z, \Delta, 0)) = (Z, -\Delta, 0) = \overline{(Z, \Delta, 0)}$, so $M_2^- \cap \{H=0\} = \sigma(M_2^+) \cap \{H=0\} = \overline{M_2^+}\cap \{H=0\}.$ Finally, note that any point in $M_1^+ \cap \overline{M_2^+} \cap \{H=0\}$ must lie in the interior of $\mathcal{S}$ by Corollary \ref{UniqueIntersectionWithBoundary} and thus corresponds to a smooth metric. 
\end{proof}

In the remainder of the section we observe some important properties of the Einstein ODE, in particular with regard to convergence to and rotation around the cone solution. 

Note that any trajectory in $\mathcal{S}$ with $Z=\Delta=0$ satisfies $Z=\Delta=0$ for all times and thus is a reparametrization of the cone solution. Geometrically, Proposition \ref{QuadrantRotation} below says that solutions in $\mathcal{S}$ exhibit rotational behaviour around the cone solution up to quadrants.

\begin{proposition}
\label{QuadrantRotation}
Let $(Z, \Delta, H)$ be a solution to the Einstein ODE within $\mathcal{S}$ with $(Z,\Delta) \neq (0,0).$ If it enters a quadrant in the $(Z, \Delta)$-plane, it either remains there for all times or it exits the quadrant into the next quadrant going counterclockwise around the cone solution. 
\end{proposition}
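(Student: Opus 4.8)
The plan is to examine the vector field of \eqref{EinsteinODEinZDeltaH} along the four coordinate half-axes of the $(Z,\Delta)$-plane — these bound the four open quadrants — and to check that across each of them the flow is transverse and always turns in the same rotational sense about the origin $(Z,\Delta)=(0,0)$, i.e.\ about the cone solution. The one preliminary fact I would record is that on $\mathcal{S}$ one has $d_1 d_2\Delta^2\le n-1$, equivalently $\frac{d_1 d_2}{n}\Delta^2-\frac{n-1}{n}\le 0$, with equality exactly when $Z=0$ and $\Delta^2=\frac{n-1}{d_1 d_2}$. This is immediate from the defining inequalities of $\mathcal{S}$: multiply $d_1 Z+\frac{d_1 d_2}{n}\Delta^2\le\frac{n-1}{n}$ by $d_2$, multiply $-d_2 Z+\frac{d_1 d_2}{n}\Delta^2\le\frac{n-1}{n}$ by $d_1$, add, and use $d_1+d_2=n$. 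In particular, the only points of $\mathcal{S}\cap\{Z=0\}$ at which $\frac{d_1 d_2}{n}\Delta^2-\frac{n-1}{n}$ fails to be strictly negative are $(0,\pm\sqrt{(n-1)/(d_1 d_2)},H)$.

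Next I would dispose of the degenerate trajectories. The sets $\{Z=\Delta=0\}$ and $\{Z=0,\ \Delta=\pm\sqrt{(n-1)/(d_1 d_2)}\}$ are invariant — they are the cone solution and the two singular solutions of Proposition \ref{SpecialSolutions} — and since they lie on the $\Delta$-axis, no such trajectory ever enters an open quadrant, so the assertion is vacuous for them. By uniqueness of solutions, every other trajectory with $(Z,\Delta)\ne(0,0)$ avoids the origin and the two points $(0,\pm\sqrt{(n-1)/(d_1 d_2)})$ in the $(Z,\Delta)$-plane for all $s$; combined with the $\mathcal{S}$-invariance of Proposition \ref{EinsteinLocus}, such a trajectory meets $\{Z=0\}$ only at points with $0<|\Delta|<\sqrt{(n-1)/(d_1 d_2)}$.

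Finally, take such a trajectory and a time at which it lies in a quadrant, say $Q=\{Z>0,\ \Delta>0\}$. On the positive $Z$-axis the ODE gives $\Delta'=Z>0$, so $\Delta$ is strictly increasing there; hence across this half-axis the trajectory can pass only from $\{\Delta<0\}$ into $\{\Delta>0\}$, i.e.\ it may enter $Q$ but cannot leave $Q$ this way. On the positive $\Delta$-axis — which it meets only at points with $0<\Delta<\sqrt{(n-1)/(d_1 d_2)}$ — the ODE gives $Z'=\frac{2}{n}\Delta\big(\frac{d_1 d_2}{n}\Delta^2-\frac{n-1}{n}\big)<0$ by the first step, so there $Z$ is strictly decreasing and any crossing is transverse, carrying the trajectory from $Q$ into $\{Z<0,\ \Delta>0\}$. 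Thus a trajectory in $Q$ either remains in $Q$ for all times or exits into the next counterclockwise quadrant; running the same computation ($\Delta'=Z$ on the negative $Z$-axis, and $Z'=\frac{2}{n}\Delta\big(\frac{d_1 d_2}{n}\Delta^2-\frac{n-1}{n}\big)$ on the negative $\Delta$-axis) for the other three half-axes gives the conclusion for the remaining quadrants. The one point needing care — the main obstacle in an otherwise routine check — is that $Z'$ vanishes on the $\Delta$-axis at $(0,\pm\sqrt{(n-1)/(d_1 d_2)})$, so a trajectory could a priori touch the $\Delta$-axis there tangentially rather than cross it transversally; the uniqueness argument of the second step is precisely what rules this out.
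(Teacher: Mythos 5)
Your proof is correct and follows the same route as the paper's: the paper's (much terser) argument likewise just computes $\Delta'=Z$ on $\{\Delta=0\}$ and $Z'=\frac{2}{n}\Delta\bigl(\frac{d_1d_2}{n}\Delta^2-\frac{n-1}{n}\bigr)\le 0$ on $\{Z=0\}$, noting that equality in the latter occurs only along the singular solutions. Your extra care about the degenerate points $(0,\pm\sqrt{(n-1)/(d_1d_2)})$ via uniqueness is exactly the detail the paper leaves implicit.
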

\begin{proof}
If $\Delta = 0,$ then $\Delta^{'} = Z.$ In fact, we also have $Z^{'}=0$ so the $\{\Delta=0\}$-axis is a vertical nullcline. 

If $Z=0$, then $Z^{'}= \frac{2}{n} \Delta \left( \frac{d_1 d_2}{n} \Delta^2 - \frac{n-1}{n} \right).$ Note that $\frac{d_1 d_2}{n} \Delta^2 - \frac{n-1}{n} \leq 0$ within $\mathcal{S}$ with equality only along the singular solutions of Proposition \ref{SpecialSolutions}.
\end{proof}

\begin{remark}
\label{VolumeMonotonicZDeltaH}
\normalfont
The monotone quantity $Y_1^{2 d_1} Y_2^{2 d_2}$ of Proposition \ref{VolumeMonotonic} is given in $(Z,\Delta,H)$-coordinates (up to a multiplicative constant) by
    \begin{align*}
        \mathcal{F} =  \left( \frac{n-1}{n} - \frac{d_1 d_2}{n} \Delta^2 + d_2 Z \right)^{d_1}\left( \frac{n-1}{n} - \frac{d_1 d_2}{n} \Delta^2 - d_1 Z \right)^{d_2}.
    \end{align*}
Furthermore, 
    \begin{align*}
        \mathcal{F}^{'} = 2 d_1 d_2 \Delta^2 H \mathcal{F}
    \end{align*}
and $\mathcal{F}$ is non-decreasing for $H \geq 0.$ Note that $\mathcal{F}^{'} \equiv 0$ on the cone solution $\cone(h)$ and $\mathcal{F}$ has a local maximum on $\cone(h).$ We set $\mathcal{F}_{\cone} \colon = \mathcal{F}_{|\cone(h)}=\left( \frac{n-1}{n} \right)^n.$ 
\end{remark}

\begin{proposition}
\label{RicciFlatSystem}
    Every trajectory in the interior of $\mathcal{S} \cap \{ H = 1 \}$ converges to $\cone^+.$
\end{proposition}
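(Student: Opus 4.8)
The plan is to restrict the Einstein ODE \eqref{EinsteinODEinZDeltaH} to the slice $\mathcal{S}\cap\{H=1\}$. Since $H'=0$ when $H^2=1$, this slice is invariant, and by Proposition \ref{EinsteinLocus} it is compact; hence all trajectories in it exist for all times. In the $(Z,\Delta)$-coordinates the restricted system is the planar ODE
\begin{align*}
    Z' &= \frac{2}{n}\,\Delta\left(d_1 d_2 Z\Delta + \frac{d_1 d_2}{n}\Delta^2 - \frac{n-1}{n} + (d_1-d_2)Z\right), \\
    \Delta' &= \Delta\left(\frac{d_1 d_2}{n}\Delta^2 - \frac{n-1}{n}\right) + Z,
\end{align*}
and I would run a Lyapunov argument with the monotone quantity $\mathcal{F}$ of Remark \ref{VolumeMonotonicZDeltaH}. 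On $\{H=1\}$ one has $\mathcal{F}'=2d_1 d_2\Delta^2\mathcal{F}\ge 0$, so $\mathcal{F}$ is non-decreasing along trajectories, and the relative interior of $\mathcal{S}\cap\{H=1\}$ is exactly the locus $\mathcal{F}>0$ (equivalently $Y_1,Y_2>0$).

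Two elementary observations make this rigorous. First, $\mathcal{F}$ is \emph{strictly} increasing along any trajectory other than the fixed point $\cone^+=(0,0,1)$: the line $\{\Delta=0\}$ is not invariant away from the origin, since there $\Delta'=Z$, so a trajectory crosses $\{\Delta=0\}$ transversally except at $\cone^+$ and hence $\Delta$ vanishes only at isolated parameter values, forcing $\mathcal{F}'=2d_1d_2\Delta^2\mathcal{F}>0$ off a discrete set. Second, the relative interior $\{\mathcal{F}>0\}\cap\{H=1\}$ is invariant: its relative boundary lies in the union of the two invariant hypersurfaces $\{d_1 Z+\tfrac{d_1 d_2}{n}\Delta^2=\tfrac{n-1}{n}\}$ and $\{-d_2 Z+\tfrac{d_1 d_2}{n}\Delta^2=\tfrac{n-1}{n}\}$ of Remark \ref{BoundaryPreserved} intersected with the invariant slice $\{H=1\}$.

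Now take a trajectory $\gamma$ in the relative interior. Then $\mathcal{F}(\gamma(s))$ is strictly increasing and, as $\mathcal{S}$ is compact, bounded, so $\mathcal{F}(\gamma(s))\to\mathcal{F}_\infty$ with $\mathcal{F}_\infty\ge\mathcal{F}(\gamma(0))>0$. The $\omega$-limit set $\omega(\gamma)$ is nonempty, compact, connected and invariant, and $\mathcal{F}\equiv\mathcal{F}_\infty$ on it by continuity; since $\omega(\gamma)$ is invariant this forces $\mathcal{F}'\equiv 0$ there, i.e.\ $2d_1d_2\Delta^2\mathcal{F}_\infty\equiv 0$, and as $\mathcal{F}_\infty>0$ we get $\Delta\equiv 0$ on $\omega(\gamma)$. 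But on $\{\Delta=0\}\cap\mathcal{S}\cap\{H=1\}$ the flow satisfies $\Delta'=Z$, so the only invariant subset is $\{(0,0,1)\}=\{\cone^+\}$. Hence $\omega(\gamma)=\{\cone^+\}$, i.e.\ $\gamma(s)\to\cone^+$, which is the claim.

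I do not expect a real obstacle here; the only mildly delicate points are the transversal crossing of $\{\Delta=0\}$ (which promotes $\mathcal{F}'\ge0$ to strict monotonicity) and the invariance of the relative interior, both immediate from the explicit form of \eqref{EinsteinODEinZDeltaH} and Remark \ref{BoundaryPreserved}. One could alternatively argue via Poincar\'e--Bendixson, using Proposition \ref{LinearizedBehaviour} to note that the only other fixed points in the slice are the source $q_i^+$ and the saddles $p_i^+$, whose stable manifolds within $\{H=1\}$ lie on the invariant boundary $\partial\mathcal{S}$ and hence cannot meet $\omega(\gamma)$; but one would then still have to exclude periodic orbits, which the monotone quantity $\mathcal{F}$ rules out at once. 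I therefore prefer the Lyapunov version above.
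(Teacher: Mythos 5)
Your proof is correct and follows essentially the same route as the paper's: a LaSalle-type argument on the compact invariant slice, using the monotone quantity $\mathcal{F}$ to force $\Delta\equiv 0$ on the $\omega$-limit set and then $\Delta'=Z$ to conclude that the limit set is $\{\cone^+\}$. The extra details you supply (positivity of $\mathcal{F}$ on the relative interior, strict monotonicity off a discrete set) are harmless but not needed, since the LaSalle step only requires that $\mathcal{F}$ be non-decreasing and bounded, hence constant on the invariant $\omega$-limit set.
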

\begin{proof}
    Since $\mathcal{S}$ is compact, the $\omega$-limit set of a trajectory is compact, connected, non-empty and invariant under the flow of the ODE. Since $\mathcal{F}$ is non-decreasing, $\mathcal{F}^{'}=0$ on the $\omega$-limit set. Thus, $\Delta=0$ and $\Delta^{'}=0$ then also implies $Z=0.$ In particular, the $\omega$-limit set consists of the fixed point $\cone^+.$
\end{proof}

The nontrivial trajectory in $M_1^+ \cap \{ H = 1 \}$ connects $p_1^+$ and $\cone^+$. It corresponds to the Ricci flat metric on $\R^{d_1+1} \times S^{d_2}$ originally constructed by B\"ohm in \cite{BohmNonCompactComhomOneEinstein}, cf. \cite{WinkSolitonsFromHopfFibrations}. Therefore, we denote the nontrivial trajectory in $M_i^+ \cap \{ H = 1 \}$ by $\gamma_{i}^{\text{RF}}.$ 

\begin{remark}
    \label{RemarkLinearizationAlongTheConeSolution}
    \normalfont
    In $(Z,\Delta,H)$-coordinates, the linearization along the cone solution is given by

\begin{align}
\label{LinearizationAlongConeSolution}
D(ODE)_{\cone(h)} =\begin{pmatrix}
0 & - \frac{2(n-1)}{n^2} & 0 \\
1 & - \frac{n-1}{n} h & 0 \\
0 & 0 & \frac{2}{n} h
    \end{pmatrix},
\end{align}
where $h(s)=-\tanh(s/n).$

Clearly, $\lambda_0=\frac{2}{n}h$ is an eigenvalue with eigenvector $(0,0,1)^T.$ The other eigenvalues are
\begin{align*}
    \lambda_{1,2} = \frac{\sqrt{n-1}}{2n} \left( - \sqrt{n-1} h \pm \sqrt{ (n-1) h^2 - 8 } \right)
\end{align*}
and the corresponding eigenvectors are $\left( - \frac{2(n-1)}{n^2}, \lambda_{1,2}, 0 \right)^T.$ 

Note that if $h=\pm \sqrt{\frac{8}{n-1}}$, then $\lambda_1 = \lambda_2= \mp \frac{\sqrt{2(n-1)}}{n}$ has algebraic multiplicity two but geometric multiplicity one, with corresponding eigenvector $(\sqrt{2(n-1)}, \pm n, 0)^T$.
\end{remark}

\begin{comment}
    \begin{remark}
    \normalfont
    Note that $\lambda_{1,2}$ satisfy $\lambda ( \frac{n-1}{n}h + \lambda) + \frac{2(n-1)}{n^2} = 0.$ The claim follows from the observation that the eigenvalues of the matrix 
    \begin{align*}
        \begin{pmatrix}
            0 & -b \\
            1 & -d 
        \end{pmatrix}
    \end{align*}
    satisfy $\lambda(\lambda+d)+b=0$ and thus
    \begin{align*}
        \begin{pmatrix}
            0 & -b \\
            1 & -d 
        \end{pmatrix}
        \begin{pmatrix}
            -b \\
            \lambda
        \end{pmatrix}
        = 
        \begin{pmatrix}
            -b \lambda \\
            - ( b+\lambda d) 
        \end{pmatrix}
        = \begin{pmatrix}
            -b \lambda \\
            \lambda^2 
        \end{pmatrix}
        = \lambda
        \begin{pmatrix}
            -b \\
            \lambda
        \end{pmatrix}.
    \end{align*}
\end{remark}
\end{comment}

The Einstein ODE \eqref{EinsteinODEinZDeltaH} restricted to the invariant set $\mathcal{S} \cap \{ H = 1 \}$ is a $2$-dimensional ODE system in $(Z,\Delta).$ Thus, the linearization at $\cone^+=\cone(1)$ determines how trajectories in the interior of $\mathcal{S} \cap \{ H =1 \}$ approach $\cone^+,$ cf. \cite[Chapter 15]{CoddingtonLevinsonODEs}.

\begin{corollary}
\label{TangentDirectionAtCone+}
    Let $n=9$. Then, for each $i$, the trajectory $\gamma_i^{\text{RF}}(s)$ becomes tangent to the $(4,9)$-direction in the $(Z,\Delta)$-plane as it approaches $\cone^+.$
\end{corollary}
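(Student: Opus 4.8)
The plan is to read the claim off the linearization of \eqref{EinsteinODEinZDeltaH} restricted to the invariant slice $\mathcal{S}\cap\{H=1\}$ at the fixed point $\cone^+=\cone(1)$, combined with the classical description of how trajectories approach a planar node. Since the matrix \eqref{LinearizationAlongConeSolution} is block diagonal in $(Z,\Delta)$ and $H$, the linearization at $\cone^+$ of the $2$-dimensional system obtained by restricting \eqref{EinsteinODEinZDeltaH} to $\mathcal{S}\cap\{H=1\}$ is the $(Z,\Delta)$-block of \eqref{LinearizationAlongConeSolution} evaluated at $h=1$, i.e.\ $\left(\begin{smallmatrix} 0 & -\frac{2(n-1)}{n^2} \\ 1 & -\frac{n-1}{n}\end{smallmatrix}\right)$; one may equivalently linearize \eqref{EinsteinODEinZDeltaH} with $H\equiv 1$ directly.

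The point that makes dimension $10$ special is that for $n=9$ one has $\sqrt{8/(n-1)}=1$, so $\cone^+$ sits exactly at the degenerate value $h=\sqrt{8/(n-1)}$ singled out in Remark \ref{RemarkLinearizationAlongTheConeSolution}. Hence the above matrix has the repeated eigenvalue $\lambda_1=\lambda_2=-\frac{\sqrt{2(n-1)}}{n}=-\frac{4}{9}<0$ with geometric multiplicity one and one-dimensional eigenspace $\R\cdot(\sqrt{2(n-1)},n)=\R\cdot(4,9)$ in the $(Z,\Delta)$-plane; thus $\cone^+$ is an improper (degenerate) node which, by Proposition \ref{LinearizedBehaviour}(f), is a sink for the planar system.

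By Proposition \ref{RicciFlatSystem} and the discussion following it, $\gamma_i^{\text{RF}}$, the nontrivial trajectory of $M_i^+\cap\{H=1\}$ (which lies in the interior of the planar region $\mathcal{S}\cap\{H=1\}$ away from its endpoint $p_i^+$), converges to $\cone^+$. For a planar improper node with negative eigenvalue, every trajectory tending to the critical point approaches it tangent to the unique eigendirection, and this behaviour is unchanged by a nonlinear perturbation of order $|(Z,\Delta)|^2$, which is exactly the situation at hand; see \cite[Chapter 15]{CoddingtonLevinsonODEs}. Consequently $\gamma_i^{\text{RF}}$ becomes tangent to the $(4,9)$-direction as it approaches $\cone^+$, which is the assertion. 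The one point requiring genuine care is this last step: in contrast to a node with distinct eigenvalues, an improper node need not persist under arbitrary $o(|x|)$ perturbations, so one must use that the remainder term is quadratic (or at least H\"older) to guarantee that the tangency survives; everything else is the short computation already recorded in Remark \ref{RemarkLinearizationAlongTheConeSolution}.
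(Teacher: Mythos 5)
Your argument is correct and follows exactly the paper's route: the paper deduces the corollary from the linearization in Remark \ref{RemarkLinearizationAlongTheConeSolution}, observing that for $n=9$ the value $h=1$ coincides with the degenerate case $h=\sqrt{8/(n-1)}$, so that $\cone^+$ is an improper node with the single eigendirection $(\sqrt{2(n-1)},n)=(4,9)$, and then invokes \cite[Chapter 15]{CoddingtonLevinsonODEs} for the fact that trajectories of the restricted planar system approach tangentially to that eigendirection. Your added caveat that the improper-node tangency requires the remainder to be better than $o(|x|)$ (here it is quadratic, since the vector field is polynomial) is a valid and welcome precision that the paper leaves implicit in the citation.
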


\begin{remark}\label{InvariantSets} 
\normalfont
Due to Remark \ref{VolumeMonotonicZDeltaH}, the sets 
\begin{align*}
    I_c = \{ (Z, \Delta, H) \in \mathcal{S} \ | \ \mathcal{F} \geq c, H \geq 0 \}
\end{align*}
are invariant under the Einstein ODE as long as $H \geq 0.$ Let $\varepsilon >0.$ Continuous dependence on the initial condition and Proposition \ref{RicciFlatSystem} imply that there are trajectories in $\mathcal{S} \cap \{ H^2 < 1 \}$ close to $\gamma_i^{\text{RF}}$ that enter $I_{\mathcal{F}_{\cone} - \varepsilon}$ and remain in $I_{\mathcal{F}_{\cone} - \varepsilon}$ as long as $H \geq 0.$ This is a special case of B\"ohm's Convergence Theorem \cite[Theorem 5.7]{BohmInhomEinstein}.
\end{remark}

\section{A generalized rotation index}
\label{SectionRotationIndex}

In this section we prove the key Lemma \ref{ThetaBoundsIntersections} on the winding number of curves in $\R^2$, which we will use in the proof of Theorem \ref{EinsteinMetricsOnSpheresMainTheorem} to produce intersections of $M_1^+ \cap \{ H=0 \}$ and $\conj{M_2^+}\cap \{H=0\}$.

\begin{definition}
\label{WindingAngle}
Let $\gamma:[a,b]\to \R^2\setminus \{0\}$ be a curve. We denote by $\theta(\gamma)$ the total winding angle that $\gamma$ makes around the origin. 

Explicitly, writing $\gamma(t)=|\gamma(t)|\begin{pmatrix}\cos\phi(t)\\ \sin \phi(t)\end{pmatrix}$, we set $\theta(\gamma)=\phi(b)-\phi(a)$.
\end{definition}

\begin{remark}
\label{WindingAngleMod2Pi}
\normalfont
    By the explicit characterization, it is clear that, for curves defined on compact intervals, $\theta(\gamma) \equiv \sphericalangle(\gamma(a), \gamma(b)) \text{ mod } 2\pi$. Furthermore, $\theta$ is continuous along homotopies in $\R^2\setminus \{0\}$.
\end{remark}

For our application, we will need to extend the definition to curves heading into the point we are counting winding around.

\begin{definition}\label{derivativecontrolswinding}
    Let $\gamma:[a,b)\to \R^2\setminus \{0\}$ be a curve that extends to a curve $\gamma: [a,b]\to \R^2$ with $\gamma(b)=0$. 
    
    We write $\theta(\gamma)\ge c$ if for all $\varepsilon>0$ small we have $\theta(\gamma_{|[a, b-\varepsilon]})\ge  c$. We write $\theta(\gamma) > c$ if $\theta(\gamma) \ge c'$ for some $c'>c$. 
    
    $\theta(\gamma)\le c$ and $\theta(\gamma) < c$ are defined analogously.
\end{definition}

We may now state the main Lemma of this section:

\begin{lemma}\label{ThetaBoundsIntersections}
Let $\gamma_{i}:[a_{i},b_{i})\to \R^2\setminus \{0\}$, $i=1,2,$ be two curves without self-intersection with $\gamma_1(a_1)=\gamma_2(a_2)$. Suppose that both $\gamma_i$ extend to $[a_i, b_i]$ such that $\gamma_i(b_i)=0$. 

If $\theta(\gamma_1)-\theta(\gamma_2)\ge \theta_0$, then the images $\gamma_1((a_1, b_1))$ and $\gamma_2((a_2,b_2))$ intersect in at least $ \ceil{\frac{\theta_0}{2\pi}}-1$ points.
\end{lemma}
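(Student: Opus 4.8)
The plan is to reduce the statement to a winding‑number count for a single closed curve built from $\gamma_1$ and $\gamma_2$. First I would replace each half‑open curve $\gamma_i \colon [a_i,b_i) \to \R^2 \setminus \{0\}$ by a curve $\tilde\gamma_i$ defined on a compact interval as follows: by Definition \ref{derivativecontrolswinding}, for every small $\varepsilon > 0$ the truncation $\gamma_{i|[a_i,b_i-\varepsilon]}$ has winding angle within the stated bounds, so fixing $\varepsilon$ small enough we may pass to curves defined on $[a_i, b_i-\varepsilon]$ with $\theta(\tilde\gamma_1) - \theta(\tilde\gamma_2) > \theta_0 - \delta$ for a $\delta$ I will choose below. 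The endpoints $\tilde\gamma_i(b_i-\varepsilon)$ are close to $0$ but nonzero; shrinking $\varepsilon$ I can also ensure both endpoints lie in a small disk $D_\rho(0)$ that is disjoint from the common starting point $P := \gamma_1(a_1) = \gamma_2(a_2)$.

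Next I would form the closed curve $\Gamma$ that runs along $\tilde\gamma_1$ from $P$ to $\tilde\gamma_1(b_1-\varepsilon)$, then along a short arc $\alpha$ inside $D_\rho(0) \setminus \{0\}$ to $\tilde\gamma_2(b_2-\varepsilon)$, then backwards along $\tilde\gamma_2$ to $P$. Since the connecting arc $\alpha$ can be chosen with winding angle $|\theta(\alpha)| < \pi$ (it stays in a small disk far from the origin relative to its own diameter — more precisely choose $\rho$ small so the total angle subtended at $0$ by $D_\rho(0)$ is tiny), the total winding number of $\Gamma$ about $0$ satisfies
\begin{align*}
\left| 2\pi\, w(\Gamma, 0) - \bigl(\theta(\tilde\gamma_1) - \theta(\tilde\gamma_2)\bigr) \right| < \pi,
\end{align*}
so $|w(\Gamma,0)| \ge \lceil \theta_0 / 2\pi \rceil - 1$ after choosing $\delta$ (and hence $\varepsilon$) small enough; here I use that the left‑hand side is an integer and that $\theta(\tilde\gamma_1) - \theta(\tilde\gamma_2)$ can be made $> \theta_0 - \delta$. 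The arc $\alpha$ itself is part of a standard auxiliary construction: since $D_\rho(0)\setminus\{0\}$ is path‑connected, such an $\alpha$ exists, and its winding contribution is controlled purely by the geometry of the small disk.

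Finally I would bound the number of intersection points of $\gamma_1((a_1,b_1))$ and $\gamma_2((a_2,b_2))$ from below by the winding number. The key point is that $\Gamma$ is a closed loop in $\R^2$ which, apart from the two curve pieces, consists of the arc $\alpha$ lying in $D_\rho(0)$, a region the images of $\gamma_1,\gamma_2$ need not avoid — so I must be slightly careful. The cleanest route: each $\tilde\gamma_i$ is a simple arc from $P$ into $D_\rho(0)$; an intersection of $\gamma_1$ with $\gamma_2$ away from $P$ is where the two simple arcs cross. Between consecutive crossings (ordered along $\tilde\gamma_1$, say), the concatenation of a sub‑arc of $\tilde\gamma_1$ and a sub‑arc of $\tilde\gamma_2$ forms a closed loop whose winding number about $0$ is at most $1$ in absolute value — because otherwise that loop, being a closed curve made of two embedded arcs meeting only at their endpoints (hence a simple closed curve), would have to contain $0$ in its interior with multiplicity $\ge 2$, which is impossible for a Jordan curve. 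Summing these local contributions, together with the contributions near $P$ and near the disk (each $O(1)$), shows $|w(\Gamma,0)| \le (\text{number of intersections}) + 1$, giving at least $\lceil\theta_0/2\pi\rceil - 1$ intersections.

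The main obstacle I anticipate is making the last step — that each "local loop" between consecutive intersection points contributes winding at most $1$ — fully rigorous, since it rests on the Jordan curve theorem applied to loops that are only piecewise smooth and whose two constituent arcs are embedded but could a priori share interior structure; one must verify the loop is genuinely simple (the two sub‑arcs of the simple curves $\gamma_1,\gamma_2$ between consecutive crossings meet only at the two crossing points) and handle the degenerate cases (tangential intersections, or intersections accumulating at $P$ or at $0$) by a small perturbation or by passing to intersection components rather than points. Everything else is routine: the truncation argument from Definition \ref{derivativecontrolswinding}, the estimate on $\theta(\alpha)$, and the additivity of winding angle under concatenation.
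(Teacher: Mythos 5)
Your overall strategy --- truncate the curves, close them up near the origin, and convert a winding-number bound into a crossing count --- is the same as the paper's, but two steps do not work as written. First, the bookkeeping does not close. You allow the connecting arc $\alpha$ a winding contribution of up to $\pi$ in absolute value, and even the justification for that is off: $D_\rho(0)$ is centered at the origin, so it subtends the full angle $2\pi$ there rather than a tiny one; the bound $|\theta(\alpha)|\le\pi$ holds only because one can construct $\alpha$ explicitly (radially in to a circle, along the shorter arc, radially out). With this symmetric choice you only obtain $|w(\Gamma,0)|\ge \ceil{\theta_0/2\pi}-1$, and combining this with your claimed inequality $|w(\Gamma,0)|\le N+1$ yields $N\ge \ceil{\theta_0/2\pi}-2$, one intersection short of the statement; your final sentence does not follow from your own estimates. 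The paper avoids this loss by closing the curve along the small circle \emph{in the direction of positive winding}, so the closing arc contributes a nonnegative angle, $\theta(c)\ge\theta_0-\varepsilon$, the winding number is at least $\ceil{\theta_0/2\pi}$, and ``at least $w-1$ self-intersections'' gives exactly the claimed count. One should also arrange, as the paper does, that each truncated curve meets the small disk only in a terminal segment, so that the closing arc creates no spurious self-intersections of $\Gamma$.

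Second, and more seriously, the inequality $|w(\Gamma,0)|\le N+1$ is not established. Your local loops between crossings consecutive along $\tilde\gamma_1$ are indeed Jordan curves (any further meeting of the two sub-arcs would be a crossing of $\gamma_1$ with $\gamma_2$ lying strictly between two consecutive ones along $\gamma_1$), so each has winding number about $0$ in $\{-1,0,1\}$. But these loops need not reassemble into $\Gamma$: the crossings may occur in a different order along $\gamma_2$ than along $\gamma_1$, so the sub-arcs of $\gamma_2$ you use can overlap or leave gaps, and the sum of the loop classes differs from $[\Gamma]$ by an uncontrolled cycle. Hence ``summing these local contributions'' is not justified; the obstacle is not tangential intersections but the combinatorics of the two orderings. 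The paper proves the needed implication (winding number $k\ge 1$ forces at least $k-1$ self-intersections) by a different, order-independent argument: the winding number of $c$ is locally constant on the complement, changes by exactly one across each arc, and vanishes on the unbounded component; labelling each self-intersection by the smallest winding number among its adjacent regions, one finds a self-intersection with each label $0,\dots,k-2$. You would need this argument, or a careful induction on the number of crossings, to replace your summation; you would also need to make explicit the initial reduction to finitely many intersection points, which both arguments use.
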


\begin{proof}
    We may assume that the images intersect only in finitely many points. In particular, for small radii $r>0,$ the curves $\gamma_1$ and $\gamma_2$ do not intersect within the disks $D_r$ around the origin. Furthermore, by continuity, for $r>0$ small, both $\gamma_i$ enter the disk $D_r$ only once and then remain within the disk. 

    Since $\gamma_1(a_1)=\gamma_2(a_2)$, $\gamma_2^{-1} * \gamma_1$ is a curve that approaches the origin at both ends. Define a new curve $\Bar{\gamma}$ in $\R^2 \setminus \{ 0 \}$ by restricting $\gamma_2^{-1} * \gamma_1$ to the part outside of $D_r.$ For $\varepsilon>0$ small, we may pick $r>0$ small enough so that $\theta(\Bar{\gamma})\ge\theta_0-\varepsilon.$
    
    We then close $\Bar{\gamma}$ by connecting the endpoints along the circle of radius $r>0$ in the direction of positive winding. The resulting curve is a closed curve $c$ in $\R^2\setminus \{0\}$ with $\theta(c) \ge \theta_0-\varepsilon$. 
    
    As $c$ is closed, the winding angle $\theta(c)$ is $2\pi$ times the topological winding number of $c$ around the origin, i.e., $\theta(c) \geq 2 \pi \ceil{\frac{\theta_0-\varepsilon}{2 \pi}}.$ In particular, the winding number is at least $\ceil{\frac{\theta_0-\varepsilon}{2 \pi}} = \ceil{\frac{\theta_0}{2 \pi}},$ provided $\varepsilon>0$ is chosen small enough.

    Since the $\gamma_i$ have no self-intersections, all self-intersections of $c$ look locally like two segments of $c$ that intersect. Indeed, if three segments intersected in a point, at least two of them would have to come from the same $\gamma_i$. 

    The curve $c$ separates the plane into disjoint domains. Within each domain, the winding number of $c$ is constant. Note that $c$ has no doubled segments, since it has only finitely many self-intersections. Thus, going from one domain to the next changes the winding number of $c$ around these domains by exactly one.

    Now, if the winding number of $c$ around some point is $k \geq 1$, $c$ must have at least $k-1$ self-intersections. To see this, label each self-intersection with the lowest winding number of $c$ with respect to the adjacent regions. Then, for each $l \in \{0, ..., k-2\}$, there is at least one self-intersection with label $l.$ Therefore $c$ has at least $\ceil{\frac{\theta_0}{2\pi}}-1$ self-intersections.

    By construction, $c$ does not self-intersect along the pasted-in segment of the circle around the origin, so any self-intersection of $c$ comes from the original $\gamma_i$. Since the $\gamma_i$ have no self-intersections, each self-intersection of $c$ is an intersection of $\gamma_1$ and $\gamma_2$.
\end{proof}

\section{Estimation of the angle ODE}
\label{SectionODEestimates}

As explained in the introduction, we are interested in the behaviour of the linearized Einstein ODE along the cone solution to describe the rotational behaviour of the trajectories around the cone solution. Note that if
\begin{align*}
X(h) = |X(h)| \begin{pmatrix} \cos{\varphi(h)} \\ \sin{\varphi(h)}\end{pmatrix}
\end{align*}
satisfies the linearized Einstein ODE \eqref{LinearizationAlongConeSolution},
\begin{align*}
\frac{d}{ds}X(h(s)) = \begin{pmatrix}
0 & - \frac{2(n-1)}{n^2} \\
1 & - \frac{n-1}{n} h(s)
    \end{pmatrix} X(h(s)),
\end{align*}
%then $\varphi$ solves the associated angle ODE $\varphi{'} = \frac{1}{|X|^2} \det ( X \wedge X{'}),$ where $X \wedge X{'}$ is the matrix with columns $X$ and $X{'}.$ Explicitly,
then $\varphi$ solves the associated angle ODE $\varphi{'} = \frac{1}{|X|^2} \det ( X \wedge X{'}),$ where \\ $\det(X \wedge X{'})$ is the determinant of the matrix with columns $X$ and $X{'}.$ 

Explicitly,
\begin{align}
    \label{AngleODEeq1}
    \frac{d}{ds}\varphi(h(s)) = & \ \cos^2(\varphi) + \frac{2(n-1)}{n^2}\sin^2(\varphi) - \frac{n-1}{n}h(s)\sin(\varphi)\cos(\varphi)\\
    \label{AngleODEeq2}
    = & \ \left(\cos(\varphi)-\frac{n-1}{2n}h(s)\sin(\varphi)\right)^2 \\
    \nonumber & \ + \frac{2(n-1)}{n^2} \left(1-\frac{n-1}{8}h(s)^2 \right)\sin^2(\varphi)\\
    \label{AngleODEeq3}
    = & \left( 1-\frac{n-1}{8}h(s)^2 \right)\cos^2(\varphi) \\
\nonumber    & \  + \frac{2(n-1)}{n^2}\left(\sin(\varphi)  -\frac{n}{4}h(s)\cos(\varphi)\right)^2,
\end{align}
where $h(s)=-\tanh(s/n)$. \vspace{2mm}

For the construction of Einstein metrics we require an explicit estimate on the angle ODE. We achieve this for $n=9$ by constructing an explicit barrier solution $\varphi_{bar}.$

\begin{remark}\label{MonotoneAngleRemark}
    \normalfont
     If $n \leq 9,$ then $\varphi$ is non-decreasing by \eqref{AngleODEeq2}. If furthermore $n\le 8$, then $\frac{d}{ds}\varphi(h(s)) \ge c > 0$.
\end{remark}

\begin{proposition}\label{AngleBarrier}
    Let $n=9$ and let $\varphi_{bar} \colon \R \to \R$ be the solution of \eqref{AngleODEeq1} with $\varphi_{bar}(0)=\frac{3}{2}\pi$. 
    
    Then $\lim\limits_{s\to -\infty}\varphi_{bar}(h(s)) < \arctan(\frac{9}{4})$.
\end{proposition}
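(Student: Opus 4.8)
The plan is to run an ODE comparison, using monotonicity of $\varphi_{bar}$ to reduce the limit statement to a finite-time crossing. First, since $n=9$ the factor $1-\tfrac{n-1}{8}h^2 = 1-h^2$ in \eqref{AngleODEeq2} is non-negative for $|h|\le 1$, so $\tfrac{d}{ds}\varphi_{bar}(h(s))\ge 0$. Hence $\ell:=\lim_{s\to-\infty}\varphi_{bar}(h(s))$ exists in $[-\infty,\tfrac32\pi]$, and it suffices to find a single finite $s_0<0$ with $\varphi_{bar}(h(s_0))<\arctan\tfrac94$; if $\ell=-\infty$ there is nothing to prove. Moreover, by \eqref{AngleODEeq2} the right-hand side vanishes only when $|h|=1$ and $\tan\varphi=\tfrac94 h$, so if $\ell$ is finite it must be one of the values $\arctan\tfrac94+k\pi$; and since $\ell\le\tfrac32\pi<2\pi+\arctan\tfrac94$, the only possibilities with $\ell\ge\arctan\tfrac94$ are $\ell=\arctan\tfrac94$ and $\ell=\pi+\arctan\tfrac94$. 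It is therefore enough to show that, starting from $\varphi=\tfrac32\pi$ at $h=0$, the solution descends strictly past both of these two values before $h$ reaches $1$.

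For the comparison I would pass to $h$ as the independent variable, which is legitimate since $h$ is strictly decreasing in $s$ for $|h|<1$, so that by \eqref{AngleODEeq2}
\begin{align*}
\frac{d\varphi}{dh} = G(\varphi,h) := -\frac{9}{1-h^2}\Bigl(\cos\varphi-\tfrac49 h\sin\varphi\Bigr)^2 - \frac{16}{9}\sin^2\varphi < 0 \qquad (0\le h<1).
\end{align*}
Then I would construct an explicit, monotone decreasing barrier curve $\psi\colon[0,h_*]\to\R$ for some $h_*<1$ with $\psi(0)\ge\tfrac32\pi$, with $\psi(h_*)<\arctan\tfrac94$, and with $\tfrac{d\psi}{dh}\ge G(\psi,h)$ pointwise, i.e.\ a supersolution for the downward $h$-flow. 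As $\varphi_{bar}$ and $\psi$ agree at $h=0$ up to the inequality $\psi(0)\ge\tfrac32\pi$, a standard comparison then gives $\varphi_{bar}(h)\le\psi(h)$ on $[0,h_*]$, whence $\varphi_{bar}(h_*)<\arctan\tfrac94$ and, by monotonicity, $\ell<\arctan\tfrac94$. In practice $\psi$ would be defined piecewise (linearly in $s$, or in one of the variables $\tan\varphi$ or $\tan\varphi-\tfrac94 h$), verifying the supersolution inequality on each piece through whichever of \eqref{AngleODEeq1}, \eqref{AngleODEeq2}, \eqref{AngleODEeq3} is most convenient there.

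The crux is the construction of $\psi$, and the obstruction is that $G$ degenerates exactly where one must cross: near $\varphi\equiv\arctan\tfrac94\pmod\pi$ one has $|G(\varphi,h)|\to\tfrac{144}{97}$ as $h\to 1$ — indeed $\min_\varphi|G(\varphi,h)|\ge\tfrac{144}{97}$ for all $h$ — and $\tfrac{144}{97}\cdot 1<\tfrac32\pi-\arctan\tfrac94$, so no barrier supported on all of $[0,1)$ could accumulate the required total descent $\tfrac32\pi-\arctan\tfrac94>\pi$. The resolution is that restricting to $h\in[0,h_*]$ with $h_*$ bounded away from $1$ keeps $1-h^2$ bounded below, which makes $|G|$ substantially larger than $\tfrac{144}{97}$ both away from and, to a lesser extent, near the two obstructing values, leaving just enough room for $\psi$ to sweep past both of them while $h\le h_*$. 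This margin is precisely what the borderline coefficient $1-\tfrac{n-1}{8}h^2$ in \eqref{AngleODEeq2} provides, which is why the argument is special to $n=9$: for $n\le 8$ one has $\varphi'\ge c>0$ by Remark \ref{MonotoneAngleRemark} and $\ell=-\infty$ automatically, while for $n\ge 10$ the monotonicity on which the whole scheme rests already fails near $h=\pm1$. Carrying out the explicit choice of $\psi$ and verifying the supersolution inequality — the only genuinely computational step — is where the real work lies.
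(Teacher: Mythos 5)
Your setup is sound and is essentially the paper's strategy: use the monotonicity of $\varphi_{bar}$ for $n=9$ coming from \eqref{AngleODEeq2}, pass to $h$ as the independent variable via $\frac{d}{dh}=-\frac{n}{1-h^2}\frac{d}{ds}$, and drive $\varphi_{bar}$ below $\arctan(\frac94)$ by a piecewise comparison, switching among the forms \eqref{AngleODEeq1}--\eqref{AngleODEeq3} according to which quadrant $\varphi_{bar}$ currently occupies. The reduction to "descend past $\pi+\arctan(\frac94)$ and $\arctan(\frac94)$ before $h$ reaches $1$" is also correct. But the proof stops exactly where the proposition lives: you never construct $\psi$, and the entire content of the statement is the quantitative fact that the four crossings (of $\pi+\arctan(\frac94)$, $\pi$, $\frac\pi2$, and $\arctan(\frac94)$) can be scheduled within $h<1$. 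In the paper this is done by solving, on each piece, the explicit $2\times 2$ \emph{linear} system whose angle satisfies the relevant comparison ODE (e.g.\ $\frac{d}{dh}A=\begin{pmatrix}0&\frac{16}{9}\\0&0\end{pmatrix}A$ for the first and last pieces), and the resulting budget is $\frac14+\frac14+(\text{the }h\text{-displacement of an }s\text{-interval of length }\frac94\frac\pi2)+\frac14\approx 0.986$ --- a margin of about $0.014$. A proof that does not exhibit this arithmetic has not proved anything, since the analogous statement fails numerically for $n\ge 10$ (Remark \ref{NumericalSolutions}); the "only genuinely computational step" you defer is the theorem.

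Two smaller points. First, the logic in your "obstruction" paragraph is inverted: $\min_\varphi|G(\varphi,h)|\ge\frac{144}{97}$ is a \emph{lower} bound on the descent rate, so the inequality $\frac{144}{97}\cdot 1<\frac32\pi-\arctan(\frac94)$ does not show that "no barrier supported on all of $[0,1)$ could accumulate the required total descent" --- indeed the paper's barrier lives on $[0,\,0.986]\subset[0,1)$, and the actual solution does accumulate the full descent on $[0,1)$. The genuine danger is different: $\pi+\arctan(\frac94)$ and $\arctan(\frac94)$ are the angles of the degenerate eigendirection of the linearization at $h=1$ (cf.\ Remark \ref{RemarkLinearizationAlongTheConeSolution}), so $\varphi_{bar}$ could a priori converge to $\pi+\arctan(\frac94)$ from above as $h\to1$ without ever crossing it; only the explicit estimates rule this out. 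Second, once you obtain $\varphi_{bar}(h_*)\le\arctan(\frac94)$ at some $h_*<1$ you still need strictness of the limit; the paper handles this either by noting the comparisons are not sharp or by the argument in Step 5' of Lemma \ref{PBDLemma}, and your sketch should address it as well.
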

In the proof we frequently use the following basic observation. If 
\begin{align*}
    X=|X| \begin{pmatrix}
    \cos ( \varphi ) \\
    \sin ( \varphi )
\end{pmatrix}
\text{ solves } X' = \begin{pmatrix}
    a & b \\
    c & d
\end{pmatrix} X,
\end{align*}
then $\varphi$ satisfies the associated ODE
\begin{align*}
    \varphi' = -b \sin^2(\varphi) + c \cos^2 ( \varphi ) + (d-a) \sin( \varphi ) \cos( \varphi ).
\end{align*}
In particular, we can obtain estimates on $\varphi$ by solving the associated linear ODE. 

Furthermore, we use the substitution 
\begin{align*}
    \frac{d}{dh} = - \frac{n}{1-h^2} \frac{d}{ds}. \vspace{2mm}
\end{align*}

\begin{proof}
The proof proceeds in four steps, as we require a different form of the angle ODE \eqref{AngleODEeq1} - \eqref{AngleODEeq3} for the comparison argument depending on the values of the barrier solution $\varphi_{bar}.$ Note that we prescribe $\varphi_{bar}$ at $s=h(s)=0$ and prove estimates as $s \to - \infty,$ i.e., $h(s) \to 1.$ \vspace{2mm}

\textit{Step 1.} $\varphi_{bar}(h=\frac{1}{4}) \le \pi + \arctan(\frac{9}{4}).$ \vspace{2mm}

Equation \eqref{AngleODEeq2} implies that $\frac{d}{ds}\varphi_{bar}(h(s)) \ge \frac{16}{81}(1-h(s)^2)\sin^2(\varphi_{bar})$. We compare with the linear ODE associated to the right hand side, 
\begin{align*}
\frac{d}{ds}A(h(s)) = 
\begin{pmatrix}
0 & -\frac{16}{81}(1-h(s)^2) \\
0 & 0
\end{pmatrix} 
A, \ \ A(0)=
\begin{pmatrix}
0 \\ 
-1
\end{pmatrix}.
\end{align*}
Thus we obtain
\begin{align*}
  \frac{d}{dh}A(h) = 
  \begin{pmatrix}
  0 & \frac{16}{9} \\
  0 & 0
\end{pmatrix} 
A, \ A(0)=\begin{pmatrix}0 \\
-1
\end{pmatrix}.
\end{align*}
The explicit solution $A(h) = 
\begin{pmatrix}
-\frac{16}{9}h \\
-1
\end{pmatrix}$ 
satisfies $A(h=\frac{1}{4}) = 
\begin{pmatrix}
-\frac{4}{9} \\
-1
\end{pmatrix}$. Thus for the associated angle $\varphi_A$ we have $\varphi_A(h=\frac{1}{4})=\pi + \arctan(\frac{9}{4}).$ \vspace{2mm}

\textit{Step 2.} $\varphi_{bar}(h=\frac{1}{2}) \le \pi$. \vspace{2mm}

Equation \eqref{AngleODEeq3} shows that we have
\begin{align*}
\frac{d}{ds}\varphi_{bar}(h(s)) \ge (1- h(s)^2) \cos^2(\varphi_{bar}), \ \
\varphi_{bar}(h=\frac{1}{4}) \le \pi + \arctan(\frac{9}{4}).  
\end{align*}
Thus we may again compare with the associated linear ODE
\begin{align*}
\frac{d}{ds}B(h(s)) = 
\begin{pmatrix}
0 & 0 \\
1-h(s)^2 & 0
\end{pmatrix}
B, \ B(h=\frac{1}{4})= 
\begin{pmatrix}
-\frac{4}{9} \\
-1
\end{pmatrix}.    
\end{align*}
We substitute again to see that this is equivalently
\begin{align*}
 \frac{d}{dh}B(h) = 
 \begin{pmatrix}
 0 & 0 \\
 -9 & 0
 \end{pmatrix}
B, \ B(h=\frac{1}{4})= 
\begin{pmatrix}
-\frac{4}{9} \\
-1\end{pmatrix},
\end{align*}
which has the explicit solution $B(h)=
\begin{pmatrix}
-\frac{4}{9} \\
-1 + 4(h-\frac{1}{4})
\end{pmatrix}$. In particular, $B(h= \frac{1}{2})= 
\begin{pmatrix}
-\frac{4}{9} \\
0\end{pmatrix}$
and the associated angle $\varphi_B$ satisfies $\varphi_B(h=\frac{1}{2})=\pi$. \vspace{2mm}

\textit{Step 3.} $\varphi_{bar}(h(s_0 - \frac{9}{4}\frac{\pi}{2})) \le \frac{\pi}{2}$, where $s_0$ is determined by $h(s_0)=\frac{1}{2}$. \vspace{2mm}

Within the region $\varphi\in [\frac{\pi}{2}, \pi]$, we have $\sin(\varphi) \cos(\varphi)\le 0$. Thus \eqref{AngleODEeq1} shows that
\begin{align*}
  \frac{d}{ds}\varphi_{bar}(h(s)) \ge \cos^2(\varphi_{bar}) + \frac{16}{81}\sin^2(\varphi_{bar}), \ \varphi_{bar}(h(s_0))\le \pi
\end{align*}
and the associated comparison ODE is 
\begin{align*}
\frac{d}{ds}C(h(s)) =  \begin{pmatrix}
0 & -\frac{16}{81}\\ 1 & 0
\end{pmatrix}
C, \ C(h(s_0))=
\begin{pmatrix}-1 \\ 0
\end{pmatrix}.    
\end{align*}
This linear ODE has the explicit solution 
\begin{align*}
  C(h(s = s_0+\tau )) = 
  \begin{pmatrix}
  -\cos(\frac{4}{9}\tau) \\
  -\frac{9}{4}\sin(\frac{4}{9}\tau))
  \end{pmatrix}.  
\end{align*}
In particular, $C(h(s_0-\frac{9}{4}\frac{\pi}{2}))=\begin{pmatrix}0 \\ \frac{9}{4}\end{pmatrix}$ with associated angle $\varphi_C(h(s_0-\frac{9}{4}\frac{\pi}{2}))=\frac{\pi}{2}$. \vspace{2mm}

\textit{Step 4.} $\varphi_{bar}(h(s_0-\frac{9}{4}\frac{\pi}{2}) + \frac{1}{4}) \le \arctan(\frac{9}{4})$.

As in {\em Step 1} it follows that we may compare with 
\begin{align*}
  \frac{d}{dh}A(h) = 
  \begin{pmatrix}
  0 & \frac{16}{9} \\
  0 & 0
\end{pmatrix}
A, \ A(h(s_0-\frac{9}{4}\frac{\pi}{2})) = 
\begin{pmatrix}
0
\\
1
\end{pmatrix},
\end{align*}
which has the explicit solution 
\begin{align*}
A(h(s_0-\frac{9}{4}\frac{\pi}{2}) + \tau) = 
\begin{pmatrix}
\frac{16}{9}\tau \\
1
\end{pmatrix}. 
\end{align*}
Thus $A(h(s_0-\frac{9}{4}\frac{\pi}{2}) + \frac{1}{4})=
\begin{pmatrix}
\frac{4}{9} \\
1
\end{pmatrix}$ and the associated angle satisfies $\varphi_A(h(s_0-\frac{9}{4}\frac{\pi}{2}) + \frac{1}{4}) = \arctan(\frac{9}{4})$. 

It remains to remark that $h(s_0-\frac{9}{4}\frac{\pi}{2}) + \frac{1}{4} < 1$. This is an explicit calculation. Note that $s_0 = 9\operatorname{artanh}(-\frac{1}{2}) \approx -4.94$ and $h(s_0-\frac{9}{4}\frac{\pi}{2}) \approx 0.73 < \frac{3}{4}$.

Finally, we obtain the strict inequality $\lim\limits_{s\to -\infty}\varphi_{bar} \le \varphi_{bar}(h(s_0-\frac{9}{4}\frac{\pi}{2})) < \arctan(\frac{9}{4})$ by noting that our comparisons are not globally sharp. Alternatively, one may proceed as in {\em Step 5'} of the proof of Lemma \ref{PBDLemma} below.
\end{proof}

In the proof of Theorem \ref{EinsteinMetricsOnSpheresMainTheorem} in section \ref{SectionProofOfMainTheorem}, we actually use the following quantitative refinement of Proposition \ref{AngleBarrier}:

\begin{lemma}
\label{PBDLemma}
Let $n=9$. There exist $\varepsilon>0$ and $\delta_0>0$ such that for all $\delta \in [0, \delta_0)$ the solution $\varphi_{bar}^{\delta} \colon \R \to \R$ of the initial value problem
\begin{align}
\label{DeltaAngleODE}
    \frac{d}{ds}\varphi(h(s)) & = \cos^2(\varphi) + \frac{2(n-1)}{n^2}\sin^2(\varphi) - \frac{n-1}{n}h(s)\sin(\varphi)\cos(\varphi) - \delta, \\
    \nonumber
    \varphi_{bar}^{\delta}(0) & =\frac{3}{2}\pi + \delta,
\end{align}
where $h(s)=-\tanh(s/n),$ satisfies
\begin{align*}
\varphi_{bar}^{\delta}(h(s)) < \arctan(\frac{9}{4}) - \varepsilon
\end{align*}
for all $s \in \R$ with $h(s)>0.995$. 

In particular, $\lim\limits_{s \to - \infty} \varphi_{bar}^{\delta}(h(s)) < \arctan(\frac{9}{4}) - \varepsilon.$
\end{lemma}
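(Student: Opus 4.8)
The plan is to upgrade Proposition \ref{AngleBarrier} to the quantitative, $\delta$-perturbed statement of Lemma \ref{PBDLemma} by a continuity/compactness argument combined with a careful re-examination of the four-step barrier comparison. First I would observe that the right-hand side of \eqref{DeltaAngleODE} is jointly continuous in $(\varphi, h, \delta)$ and that for $\delta = 0$ it reduces to \eqref{AngleODEeq1}, so $\varphi_{bar}^0 = \varphi_{bar}$. Since the ODE is autonomous in $s$ after the substitution $\frac{d}{dh} = -\frac{9}{1-h^2}\frac{d}{ds}$, on any compact $h$-interval $[0, 1-\eta]$ the solution $\varphi_{bar}^\delta$ depends continuously on $\delta$, uniformly on that interval. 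The subtlety is that the conclusion concerns the limit $h \to 1$ (i.e. $s \to -\infty$), which is \emph{not} a compact range, so naive continuous dependence is insufficient; this is exactly why the lemma restricts attention to $h > 0.995$ and why one needs the barrier comparisons to be robust under small perturbations.

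The key steps, in order: \textbf{(1)} Fix the target $h$-value $h_\ast$ from the end of {\em Step 3--4} of Proposition \ref{AngleBarrier}, namely $h_\ast = h(s_0 - \tfrac{9}{4}\tfrac{\pi}{2}) + \tfrac14 \approx 0.73 + 0.25 < 1$, where $s_0 = 9\operatorname{artanh}(-\tfrac12)$. Proposition \ref{AngleBarrier} gives $\varphi_{bar}^0(h_\ast) < \arctan(\tfrac94)$, in fact with a strict margin because the comparisons are not sharp; call the margin $3\varepsilon_0 > 0$. \textbf{(2)} Redo {\em Steps 1--4} with the extra $-\delta$ term: in each step the comparison linear ODE acquires an inhomogeneity bounded in terms of $\delta$ (equivalently, the angle barrier ODE gains $-\delta$ on its right-hand side), and the initial angle is shifted by $+\delta$. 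Each of the four linear comparisons produces an explicit angle that depends continuously on $\delta$; choosing $\delta_0$ small enough that all four perturbed estimates degrade by less than $\varepsilon_0$, one gets $\varphi_{bar}^\delta(h_\ast) < \arctan(\tfrac94) - 2\varepsilon_0$ for all $\delta \in [0, \delta_0)$. One must check the inequalities in each step still hold with the $-\delta$: e.g. in {\em Step 1}, \eqref{AngleODEeq2} minus $\delta$ still dominates $\tfrac{16}{81}(1-h^2)\sin^2\varphi - \delta$, and since $1-h^2$ is bounded below on $[0, \tfrac14]$ the loss is controllable; similarly for {\em Steps 2--4}. \textbf{(3)} Having pinned down $\varphi_{bar}^\delta$ at the fixed value $h_\ast < 0.995 < 1$, use the monotonicity of $\varphi_{bar}^\delta$ to propagate the bound to all $h \in (h_\ast, 1)$, in particular to $h > 0.995$. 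Monotonicity for $\delta = 0$ is Remark \ref{MonotoneAngleRemark} (via \eqref{AngleODEeq2}); for small $\delta > 0$ one needs to recheck that the right-hand side of \eqref{DeltaAngleODE} stays nonnegative, which holds because \eqref{AngleODEeq2} has a quantitative lower bound $\tfrac{16}{81}(1-\tfrac{8}{81}h^2)\sin^2\varphi \geq 0$ away from $\sin\varphi = 0$; the only danger is near $\varphi \equiv 0 \pmod \pi$, and there \eqref{AngleODEeq3} gives a cosine-squared lower bound, so on the relevant range $\varphi_{bar}^\delta$ stays below $\arctan(\tfrac94)$ and the derivative stays $\geq -\delta$, which for $h$ near $1$ (where $\tfrac{d}{dh}$ carries a large factor $\tfrac{9}{1-h^2}$) still forces $\varphi_{bar}^\delta$ to remain trapped below $\arctan(\tfrac94) - \varepsilon$ with $\varepsilon := 2\varepsilon_0$. \textbf{(4)} Since $\varphi_{bar}^\delta(h)$ is bounded above by $\arctan(\tfrac94) - \varepsilon$ and monotone (or at worst has derivative $\geq -\delta$ which is integrable in $h$ near $1$ only after the substitution — here one must be slightly careful, but the trapping argument of step (3) handles it), the limit $\lim_{s\to-\infty}\varphi_{bar}^\delta(h(s))$ exists and satisfies the same bound.

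The main obstacle I anticipate is {\em Step (3)}: ensuring the bound, once established at the single value $h_\ast$, actually persists all the way to $h \to 1$ for $\delta > 0$. For $\delta = 0$ pure monotonicity suffices, but the $-\delta$ term can make $\varphi'$ negative when $\sin\varphi$ is small, i.e. precisely when $\varphi_{bar}^\delta$ is near a multiple of $\pi$ — and $\arctan(\tfrac94) \approx 1.15$ is safely away from $0$ and $\pi$, so near the target value the cosine-squared term in \eqref{AngleODEeq3} dominates $\delta$ and the flow is pushed \emph{upward}, creating a genuine barrier: if $\varphi_{bar}^\delta$ ever tried to increase past $\arctan(\tfrac94) - \varepsilon$ it would first have to cross a region where $\varphi' \geq \tfrac12\cos^2(\arctan\tfrac94 - \varepsilon) - \delta > 0$, contradiction once $\delta_0$ is small. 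Making this trapping quantitative — choosing $\varepsilon$ and $\delta_0$ compatibly so the barrier closes — is the delicate bookkeeping, but it is exactly analogous to the comparison arguments already carried out in Proposition \ref{AngleBarrier}, and the remark there that "one may proceed as in {\em Step 5'}" signals that the authors intend precisely this kind of trapping refinement. Everything else is continuous dependence on a compact interval plus explicit elementary estimates.
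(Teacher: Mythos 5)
Your proposal is correct and matches the paper's own argument: the paper's Steps 1'--4' carry out your step (2) with explicit constants (absorbing the $-\delta$ term into each comparison ODE via factors such as $(1-17\delta)$ on the compact $h$-ranges where $\tfrac{9}{1-h^2}$ is bounded), and its Step 5' is exactly your trapping barrier at the level $\arctan(\tfrac94)-\varepsilon$ --- with the one caveat that the uniform-in-$h$ positivity of the right-hand side there comes from the square $[\cos\varphi-\tfrac49h\sin\varphi]^2\ge\tfrac{97}{81}\sin^2(\varepsilon)$ in \eqref{AngleODEeq2}, not from the first term of \eqref{AngleODEeq3}, which is $(1-h^2)\cos^2\varphi$ for $n=9$ and degenerates as $h\to1$. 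Your alternative of replacing the explicit constant-tracking by continuous dependence on $\delta$ over the compact interval $[0,h_*]$ is also legitimate, provided you take the strict inequality at $h_*$ from the non-sharp comparisons inside the proof of Proposition \ref{AngleBarrier} (the statement of that proposition alone only controls the limit $h\to1$).
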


\begin{proof}
The proof follows the ideas of the proof of Proposition \ref{AngleBarrier}.  \vspace{2mm}

\textit{Step 1'}. $\varphi_{bar}^{\delta}(h=\frac{1 + \frac{9}{4}\tan(\delta)}{4 (1 - 17\delta)})\le \pi + \arctan(\frac{9}{4})$. \vspace{2mm}

As in {\em Step 1} of the proof of Proposition \ref{AngleBarrier}, we conclude
\begin{align*}
    \frac{d}{dh}\varphi_{bar}^{\delta} \le -\frac{16}{9} \sin^2(\varphi_{bar}^{\delta}) + \frac{9}{1-h^2}\delta.
\end{align*}

Note that $4^2+9^2=97$ and thus for $\varphi_{bar}^{\delta} \in [\pi+ \arctan(\frac{9}{4}), \frac{3}{2}\pi + \delta]$ we have $\sin^2(\varphi_{bar}^{\delta})\in[\frac{81}{97}, 1]$ and hence $\frac{16}{9}\sin^2(\varphi_{bar}^{\delta}) > 1$. Furthermore, since $\frac{9}{1-\frac{4}{9}} = \frac{81}{5}<17<17 \cdot \frac{16}{9} \sin^2( \varphi_{bar}^{\delta})$, we obtain for $h\le \frac{2}{3}$ the estimate
\begin{align*}
    \frac{d}{dh}\varphi_{bar}^{\delta} \leq -\frac{16}{9} \sin^2(\varphi_{bar}^{\delta}) + \frac{9}{1-h^2}\delta     \le - \frac{16}{9} (1 - 17 \delta) \sin^2(\varphi_{bar}^{\delta}).
\end{align*}

From here we proceed as in {\em Step 1} of the proof of Proposition \ref{AngleBarrier}, using the initial condition $\begin{pmatrix}
\tan \delta \\
-1
\end{pmatrix},$
to conclude that 
$\pbd(h=\frac{1 + \frac{9}{4}\tan(\delta)}{4 (1 - 17\delta)}) \le \pi + \arctan(\frac{9}{4})$. \vspace{2mm}

\textit{Step 2'.} $\pbd(h=\frac{2 + \frac{9}{4}\tan(\delta)}{4(1-17\delta)}) \le \pi.$ \vspace{2mm}

As in {\em Step 2} of the proof of Proposition \ref{AngleBarrier}, we obtain
\begin{align*}
    \frac{d}{dh}\pbd \le - 9 \cos^2(\pbd) + \frac{9}{1-h^2}\delta.
\end{align*}

Now we proceed as in {\em Step 1'}. On the interval $[\pi, \pi+\arctan(\frac{9}{4})]$ we have $\frac{16}{97} \le \cos^2 (\pbd)\le 1$ and hence $9\cos^2(\pbd) \ge 1.$ Therefore, for $h\le \frac{2}{3}$, we obtain the estimate
\begin{align*}
    \frac{d}{dh}\pbd \le - 9 (1-17\delta) \cos^2(\pbd)
\end{align*}
and thus $\pbd(h=\frac{1 + \frac{9}{4}\tan(\delta)}{4(1-17\delta)} + \frac{1}{4(1-17\delta)}) \le \pi$ by comparison. \vspace{2mm}

\textit{Step 3'.}  $\pbd(h(s_0^{\delta} - \frac{9}{4}\frac{\pi}{2}\frac{1}{1-\frac{81}{16}\delta}))\le \frac{\pi}{2},$ where $s_0^{\delta}$ is determined by $h(s_0^{\delta})=\frac{2 + \frac{9}{4}\tan(\delta)}{4(1-17\delta)}$. \vspace{2mm}

Proceeding as in {\em Step 3} of the proof of Proposition \ref{AngleBarrier}, we derive 
\begin{align*}
    \frac{d}{ds}\pbd & \ge \cos^2(\pbd) + \frac{16}{81}\sin^2(\pbd) - \delta \\
    &  \ge \left(\cos^2(\pbd) + \frac{16}{81}\sin^2(\pbd)\right) (1 - \frac{81}{16} \delta)
\end{align*}
and conclude $\pbd(h(s_0^{\delta} - \frac{9}{4}\frac{\pi}{2}\frac{1}{1-\frac{81}{16}\delta}))\le \frac{\pi}{2}$ by comparison. \vspace{2mm}

\textit{Step 4'.} There exists $\delta_0 >0$ such that for all $\delta \in [0, \delta_0)$ we have 
\begin{align*}
    \pbd(h(s_0^{\delta} - \frac{9}{4}\frac{\pi}{2}\frac{1}{1-\frac{81}{16}\delta}) + \frac{1}{4(1-455\delta)}) \le \arctan(\frac{9}{4}).
\end{align*}

Note that there is $\delta_0 >0$ so that for all $\delta \in [0, \delta_0)$ we have $h(s_0^{\delta} - \frac{9}{4}\frac{\pi}{2}\frac{1}{1-\frac{81}{16}\delta}) + \frac{1}{4(1-455\delta)} < 0.99.$ Indeed, by continuity, it suffices to note that for $\delta = 0$ we have $h(s_0- \frac{9}{4}\frac{\pi}{2}) + \frac{1}{4} \approx 0.986.$

Now we can proceed as in {\em Step 1'}, except that we replace the estimate $h \leq \frac{2}{3}$ by the estimate $h \leq 0.99.$ \vspace{2mm}
    
\textit{Step 5'.} There are $\varepsilon > 0$ and $\delta_0>0$ such that for all $\delta \in [0,\delta_0)$ and $h>0.995$ we have 
\begin{align*}
 \pbd(h) \le \arctan(\frac{9}{4})-\varepsilon.
\end{align*}
The estimate in {\em Step 4'} shows that there is $\varepsilon>0$ such that $\pbd(0.995) \leq \arctan(\frac{9}{4})-\varepsilon$ for $\delta \in [0,\delta_0)$. Let $\Psi(\phi,h)$ denote the right hand side of \eqref{DeltaAngleODE}. After possibly shrinking $\delta_0>0$, note that for all $h \in (-1,1)$ and $\delta \in [0, \delta_0)$ we have
\begin{align*}
 \Psi(\phi=\arctan(\frac{9}{4})-\varepsilon,h) 
    &= [\cos(\phi)-\frac{4}{9} h \sin(\phi)]^2 + \frac{16}{81}(1-h^2)\sin^2(\phi) - \delta \\
    &\ge [\cos(\phi)-\frac{4}{9}h \sin(\phi)]^2 - \delta\\
    &\ge [\cos(\phi)-\frac{4}{9}\sin(\phi)]^2 - \delta\\
    &=\frac{97}{81}\sin^2(\varepsilon) - \delta\\
    &\ge 0.
\end{align*}
In particular, it follows that $\pbd(h) \le \arctan(\frac{9}{4})-\varepsilon$ for all $h>0.995$ and $\delta<\delta_0$, as claimed.
\end{proof}

\section{Proof of the main theorem}
\label{SectionProofOfMainTheorem}

Recall that $SO(d_1+1) \times SO(d_2+1)$-invariant Einstein metrics on $S^{d_1+d_2+1}$ correspond to points in $M_1^+ \cap \overline{M_2^+} \cap \{H=0\}$ due to Lemma \ref{IntersectionsAreSolutions}. Thus, for a given pair $(d_1,d_2)=(2,7),$ $(3,6),$ $(4,5)$, the existence of a non-round $SO(d_1+1) \times SO(d_2+1)$-invariant Einstein metric on $S^{10}$ is equivalent to the existence of at least two points in $M_1^+ \cap \overline{M_2^+} \cap \{H=0\}.$ To produce these intersections, we will use Lemma \ref{ThetaBoundsIntersections} on the curves $M_1^+ \cap \{H=0\}$ and $\overline{M_2^+} \cap \{H=0\}$.\vspace{2mm}

\textit{Proof of Theorem \ref{EinsteinMetricsOnSpheresMainTheorem}.} Fix $a>0$ and a parametrization $\Tilde{\gamma}^i_0 \colon [-a,0) \to \{ H = 0 \}$ of $M^+_i\cap \{H=0\}$ for $i=1,2$ such that $\Tilde{\gamma}^i_0$ extends continuously to $[-a,0]$ with value $\Tilde{\gamma}^i_0(0) = (0,0,0)$ in $(Z,\Delta,H)$-coordinates. This is possible since trajectories converge to the cone solution $\cone(h)=(0,0,h)$, see Remark \ref{InvariantSets}.

Note that $\Tilde{\gamma}_0^i(-a)\in \partial\mathcal{S},$ since, by Corollary \ref{UniqueIntersectionWithBoundary}, there is a trajectory of the Einstein ODE \eqref{EinsteinODEinZDeltaH} emanating from $p_i^+$ that lies entirely in the boundary $\partial \mathcal{S}$. Moreover, this trajectory remains in the same quadrant of $M_i^+ \cap \{ H = h \}$ as $p_i^+$ since it cannot intersect the explicit solutions $q_i^+(h)$ of Proposition \ref{SpecialSolutions} which occupy $\partial\mathcal{S} \cap \{Z=0\}$ and has the rotational behaviour described in Proposition \ref{QuadrantRotation}. In particular, $\sign{Z(\Tilde{\gamma}^i_0(-a))} = \sign{Z(p_i^+)}.$

Therefore we can extend $\Tilde{\gamma}^i_0$ to a curve $\gamma^i_0 \colon [-b,0) \to \{ H = 0 \}$ by connecting $\Tilde{\gamma}^i_0(-a) \in \partial\mathcal{S}$ to $q_i^+(0)$ along the boundary of $\mathcal{S}$, staying in the half-plane $\{\sign{Z}=\sign{Z(p_i^+)}\}$ on $(-b,-a]$, see Figure \ref{fig:wrapfig} with $h=0$. 
Note that we can ensure that $\Tilde{\gamma}^i_0$ does not have self-intersections since $\Tilde{\gamma}^i_0$ only meets $\partial\mathcal{S}$ in the single point $\Tilde{\gamma}_0^i(-a)$ by Corollary \ref{UniqueIntersectionWithBoundary}.

Recalling $\overline{(Z, \Delta, H)} = (Z, -\Delta, H)$, as in Definition \ref{defBarInvolution}, the two curves $\gamma^1_0$ and $\overline{\gamma^2_0}$ have the same starting point $q_1^+(0)=\overline{q_2^+(0)}$ and we are in the position of Lemma \ref{ThetaBoundsIntersections}. Theorem \ref{EinsteinMetricsOnSpheresMainTheorem} follows if we can prove $\ceil{\frac{|\theta(\gamma^1_0)-\theta(\overline{\gamma^2_0})|}{2\pi}}-1>1$, i.e., $\theta(\gamma^1_0)-\theta(\overline{\gamma^2_0})=\theta(\gamma^1_0)+\theta(\gamma^2_0)>4\pi$, in the sense of Definition \ref{derivativecontrolswinding}. 

We shall prove $\theta(\gamma^1_0)>2\pi$, with the argument that $\theta(\gamma_0^2)>2\pi$ being completely analogous. Going forward, we set $\gamma_0=\gamma_0^1$. \vspace{2mm}

\begin{wrapfigure}{r}{0.25\textwidth}
\includegraphics[width=0.9\linewidth]{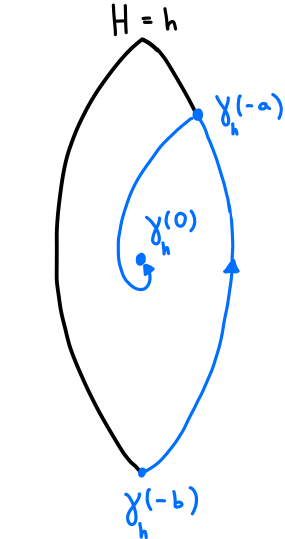}
    \caption{\,\\ The curve $\gamma_h$}
\label{fig:wrapfig}
\end{wrapfigure}

For $h \in (-1,1)$, we define curves $\gamma_h:[-b, 0) \to (M_1^+\cup \partial \mathcal{S})\cap \{H=h\}$ in the following way: For $\tau \in [-b, 0)$, let $\gamma_h(\tau)$ be the unique intersection point of the trajectory of the Einstein ODE passing through $\gamma_0(\tau)$ and the slice $\{H=h\}$. Note that then, for each $\tau,$ the curve $h\mapsto \gamma_h(\tau)$ is a reparametrization of a trajectory of the Einstein ODE. 

Identifying each slice $\{H=h\}$ with $\R^2$, we may write 
\begin{align*}
\hspace{-0.33\textwidth}
\gamma_h(\tau)=|\gamma_h(\tau)|
\begin{pmatrix}
\cos \varphi(h, \tau) \\ 
\sin \varphi(h, \tau)
\end{pmatrix}    
\end{align*}
for a function $\varphi$ whose indeterminacy is fixed by setting $\varphi(0, -b) = -\frac{\pi}{2}.$ Note that hence $\varphi(h, -b) = -\frac{\pi}{2}$ for all $h \in (-1,1)$ by the explicit singular solutions of Proposition \ref{SpecialSolutions}. 

By the explicit characterization of $\theta$ in definition \ref{WindingAngle}, it remains to show that $\theta(\gamma_0)>2\pi$. This follows immediately from the following proposition:

\begin{proposition}
\label{NonLinearComparison}
For $\delta>0, \varepsilon>0$ sufficiently small, there is $\tau_0 > 0$ such that for all $\tau \in [-\tau_0, 0)$ and $h \in [0, 1-\varepsilon]$ we have
\begin{align*}
    \varphi(h, \tau) > \pbd(h),
\end{align*}
where $\pbd$ is as defined in Lemma \ref{PBDLemma}.
\end{proposition}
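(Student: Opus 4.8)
The plan is to compare the nonlinear angle function $\varphi(h,\tau)$ of the curves $\gamma_h$ with the barrier solution $\pbd$ from Lemma \ref{PBDLemma} by a continuity-in-$h$ argument, starting from the behaviour near $h=1$. First I would record the initial data at $h=1$: the curve $\gamma_1(\tau)$ lies in $M_1^+\cap\{H=1\}$, whose nontrivial trajectory is $\gamma_1^{\text{RF}}$, and by Corollary \ref{TangentDirectionAtCone+} this trajectory becomes tangent to the $(4,9)$-direction in the $(Z,\Delta)$-plane as it approaches $\cone^+=(0,0,1)$. Hence, for $\tau$ close to $0$, the points $\gamma_1(\tau)$ are close to the origin in the slice $\{H=1\}$ and approach it asymptotically in the $(4,9)$-direction, so their angle is close to $\arctan(\tfrac{9}{4})$ (modulo the winding bookkeeping fixed by $\varphi(0,-b)=-\tfrac\pi2$ and Proposition \ref{SpecialSolutions}, which pins down the correct branch — this is where I'd be careful that $\varphi(1,\tau)$ really is $\arctan(\tfrac94)$ and not $\arctan(\tfrac94)+2\pi k$). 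So there is $\tau_0>0$ with $\varphi(1,\tau) < \arctan(\tfrac94)-\varepsilon$ for $\tau\in[-\tau_0,0)$, after possibly shrinking $\varepsilon$.

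Next I would set up the comparison. Fix such a $\tau\in[-\tau_0,0)$ and think of $h\mapsto\gamma_h(\tau)$ as (a reparametrization of) a trajectory of the Einstein ODE \eqref{EinsteinODEinZDeltaH}, which by Remark \ref{InvariantSets} stays inside the invariant set $I_{\mathcal F_{\cone}-\varepsilon}$ — i.e.\ uniformly close to the cone solution — as long as $H\ge 0$, provided $\tau_0$ is small enough (so that $\gamma_0(\tau)$ is close to the origin in $\{H=0\}$, equivalently $\gamma_1^{\text{RF}}$-close near $H=1$). On the set where the trajectory is this close to $\cone(h)$, the angle $\varphi(h,\tau)$ of $\gamma_h(\tau)$ satisfies an ODE in $h$ that is a small perturbation of the linearized angle ODE along the cone solution, namely \eqref{AngleODEeq1} written in the $h$-variable via $\frac{d}{dh}=-\frac{n}{1-h^2}\frac{d}{ds}$. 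The key point is that the perturbation can be absorbed into the $-\delta$ term of \eqref{DeltaAngleODE}: choosing the $I_c$-neighbourhood small enough (i.e.\ $\tau_0$ small), the right-hand side of the exact angle ODE for $\varphi(h,\tau)$ is $\ge$ the right-hand side $\Psi(\varphi,h)$ of \eqref{DeltaAngleODE} for the chosen $\delta$. Then a standard ODE comparison, run \emph{backwards} from $h=1$ (where $\varphi(1,\tau)<\arctan(\tfrac94)-\varepsilon$ while $\pbd$ satisfies $\pbd(h)<\arctan(\tfrac94)-\varepsilon$ near $h=1$ by Lemma \ref{PBDLemma}), gives $\varphi(h,\tau)>\pbd(h)$ for all $h\in[0,1-\varepsilon]$: if equality $\varphi(h_*,\tau)=\pbd(h_*)$ first occurred (decreasing $h$ from $1$), the differential inequality would force the reverse ordering just above $h_*$, a contradiction, once one checks the strict initial ordering at $h$ slightly below $1$ — which is exactly what the uniform gap $\varepsilon$ in Lemma \ref{PBDLemma} and the gap at $h=1$ provide.

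Since $\pbd(0)=\tfrac32\pi+\delta$, this yields $\varphi(0,\tau)>\tfrac32\pi$, and combined with $\varphi(0,-b)=-\tfrac\pi2$ and the fact that $\gamma_0$ has no self-intersections (so $\varphi(0,\cdot)$ is, up to the monotone behaviour established via Remark \ref{QuadrantRotation}/the nullcline analysis, well-controlled between these endpoints) we get $\theta(\gamma_0)=\varphi(0,0^-)-\varphi(0,-b)>\tfrac32\pi-(-\tfrac\pi2)=2\pi$ in the sense of Definition \ref{derivativecontrolswinding}; here one uses that $\varphi(0,\tau)$ is monotone near $\tau=0$ so the liminf-type definition of $\theta$ is controlled from below. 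The main obstacle — and the step I would spend the most care on — is making the perturbation-of-the-linearization estimate quantitative and uniform: namely showing that the deviation of the full Einstein ODE in $(Z,\Delta,H)$ from its linearization along $\cone(h)$, when restricted to the invariant neighbourhood $I_{\mathcal F_{\cone}-\varepsilon}$, translates into an error in the angle ODE that is genuinely bounded by a fixed $\delta$ uniformly for $h\in[0,1-\varepsilon]$ and for all $\tau\in[-\tau_0,0)$, so that Lemma \ref{PBDLemma}'s barrier (which only tolerates a constant deficit $\delta$) applies. A secondary subtlety is the winding bookkeeping: ensuring the branch of $\varphi$ fixed by $\varphi(0,-b)=-\tfrac\pi2$ is consistent with $\varphi(1,\tau)$ being literally $\arctan(\tfrac94)$ (and not an added multiple of $2\pi$), which one checks by following the continuous family $h\mapsto\gamma_h(\tau)$ together with the boundary curve, using that the boundary trajectory from $q_i^+$ stays in a fixed quadrant.
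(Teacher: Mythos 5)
Your overall strategy is the same as the paper's: compare $\varphi(h,\tau)$ with the barrier $\pbd$ inside an invariant tube around the cone solution, with the initial ordering supplied by the tangent direction $\arctan(\frac94)$ of $\gamma_1^{\text{RF}}$ and the uniform gap $\pbd(h)<\arctan(\frac94)-\varepsilon_{bar}$ for $h>0.995$ from Lemma \ref{PBDLemma}. However, there are two genuine gaps in the execution.

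First, the comparison cannot be anchored at $h=1$. For every interior $\tau$ the trajectory $h\mapsto\gamma_h(\tau)$ emanates from $p_1^+$, so $\lim_{h\to 1}\gamma_h(\tau)=p_1^+$, whose angle is $\arctan(\frac{d_1}{d_1-1})$ — not a point near the origin with angle $\arctan(\frac94)$. Your claim that ``for $\tau$ close to $0$ the points $\gamma_1(\tau)$ are close to the origin'' is false, and for $h$ near $1$ the trajectory is near $p_1^+$ and then near $\gamma^{RF}$, i.e.\ far from the cone solution, so the ``small perturbation of the linearized angle ODE'' regime does not apply there. The comparison has to start at the entry value $h_\tau<1$ of the trajectory into a small tube $I_c$ around the cone; one then shows $\gamma_{h_\tau}(\tau)\to\gamma^{RF}(t^{RF})$ as $\tau\to 0$ and combines $|\varphi^{RF}(t^{RF})-\lim_{t\to t_2}\varphi^{RF}(t)|<\frac{\varepsilon_{bar}}{2}$ with $|\varphi(h_\tau,\tau)-\varphi^{RF}(t^{RF})|<\frac{\varepsilon_{bar}}{2}$ and $\lim_{t\to t_2}\varphi^{RF}(t)\ge\arctan(\frac94)$ (which requires Proposition \ref{QuadrantRotation} to fix the branch) to get $\varphi(h_\tau,\tau)>\arctan(\frac94)-\varepsilon_{bar}>\pbd(h_\tau)$. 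Note also that your written inequality $\varphi(1,\tau)<\arctan(\frac94)-\varepsilon$ points the wrong way: with that bound you cannot conclude the strict initial ordering $\varphi>\pbd$ at all.

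Second, the step you yourself flag as the ``main obstacle'' — that the deviation of the nonlinear Einstein ODE from the linearization is absorbed by the $-\delta$ term — is the heart of the matter and is left unproved. The paper settles it by building the helicoidal surface $\Psi^\delta(h,r)=(0,0,h)+r(\cos\pbd(h),\sin\pbd(h),0)$ and computing that the Einstein vector field $E$ satisfies $\langle N,E\rangle_{|\Psi^\delta(h,r)}=\frac{1-h^2}{n}\left(\delta r+O(r^2)\right)>0$ for $0<r<r_0(\delta)$; choosing $c(\delta)$ so that $I_c$ lies in the $r_0$-tube then makes $\Psi^\delta$ a one-sided barrier for all trajectories inside $I_c$, which is exactly the uniform comparison you need. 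Without this computation (or an equivalent quantitative estimate), the proof is incomplete.
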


In particular, it follows that for all $\tau \ge -\tau_0$, 
\begin{align*}
\theta({\gamma_0}_{|[-b, \tau]}) 
 = \varphi(0, \tau) - \varphi(0,-b) \ge \pbd(0) - \varphi(0, -b) = \frac{3}{2}\pi + \delta + \frac{1}{2}\pi = 2\pi + \delta.
\end{align*}
This completes the proof of Theorem \ref{EinsteinMetricsOnSpheresMainTheorem}. $\hfill \Box$

\begin{proof}[Proof of Proposition \ref{NonLinearComparison}] 
Recall that $\pbd$ solves the ODE
\begin{align}
    \tag{\ref{DeltaAngleODE}} \frac{d}{ds}\pbd(h(s)) = & \ \cos^2(\pbd) + \frac{2(n-1)}{n^2}\sin^2(\pbd) \\
\nonumber    & \ - \frac{n-1}{n}h(s)\sin(\pbd)\cos(\pbd) - \delta.
\end{align}
We define a helicoidal surface $\Psi^\delta$ in $(Z,\Delta, H)$-coordinates by setting
\begin{align*}
  \Psi^\delta(h, r) = (0,0,h) + r(\cos(\pbd(h)), \sin(\pbd(h)), 0).  
\end{align*}

The aim is to use $\Psi^\delta$ as a barrier for solutions of the Einstein ODE. Let $N = \frac{\partial\Psi}{\partial r} \times \frac{\partial \Psi}{\partial s}$ denote the forward normal of $\Psi^\delta$ and let $E$ denote the vector field defined by the Einstein ODE \eqref{EinsteinODEinZDeltaH}. It is straightforward to compute that 
\begin{align*}
    \langle N, E \rangle_{|\Psi^\delta(h,r)}
    =  \frac{1-h^2}{n} \left( \delta r + O(r^2) \right).
\end{align*}

\begin{comment}
\begin{align*}
    \langle N, ODE \rangle_{|\Psi^\delta(h,r)}
    =  \frac{1-h^2}{n} \left( \delta r + + \left( d_1 d_2 \delta (x_1^2 +x_2^3) - \frac{2}{n}(d_1-d_2)x_1x_2^2 \right) r^2 + O(r^3) \right)
\end{align*}
for $x_1(h)=\cos(\pbd(h)), x_2(h)=\sin(\pbd(h))$. 
\end{comment}

In particular, for $r \in (0,r_0(\delta))$, we have $\langle N, E \rangle_{|\Psi^\delta(h,r)} >0.$

We choose a constant $c = c(\delta) < \mathcal{F}_{\cone}$ such that the invariant set $I_c = \{\mathcal{F} \ge c\}$ of Remark \ref{InvariantSets} is contained in the $r_0$-tube around the cone solution.

Pick $\varepsilon_{bar}>0$ as in Lemma \ref{PBDLemma}, i.e., such that $\pbd(h) < \arctan(\frac{9}{4})-\varepsilon_{bar}$ for all $\delta>0$ small, $h>0.995$. Furthermore, fix a parametrization of $\gamma^{RF}_1 \subset M_1^+ \cap \{ H=1 \}$ defined on $(t_1, t_2)$ as 
\begin{align*}
    \gamma^{RF}_1(t) = |\gamma^{RF}_1(t)|
    \begin{pmatrix}
    \cos(\varphi^{RF}(t)) \\
    \sin(\varphi^{RF}(t))
    \end{pmatrix}
\end{align*}
with the normalization $\lim\limits_{t\to t_1}\varphi^{RF}(t)\in (0, \frac{\pi}{2})$. Note that in this normalization the function
\begin{align*}
\varphi \colon  M_1^+ \cap \{-1 < H \le 1\} \to \R, \ \ p \mapsto \begin{cases} 
\varphi^{RF}(t), & \text{if } p=\gamma^{RF}(t), \\ 
\varphi(h, \tau), & \text{if } p=\gamma_h(\tau)
\end{cases}    
\end{align*}
is continuous.

Denote by $t^{RF}$ the time $\gamma^{RF}$ enters $I_c$. By continuity and the fact that $\varepsilon_{bar}$ is independent of $\delta$, we may assume that $c=c(\delta)$ was chosen close enough to $\mathcal{F}_{\cone}$ such that $|\varphi^{RF}(t^{RF}) - \lim\limits_{t\to t_2}\varphi^{RF}(t)| < \frac{\varepsilon_{bar}}{2}$.

Since $\gamma^{RF}(t^{RF})$ lies outside a compact neighborhood of the stationary point $\cone^+$, continuous dependence on initial conditions shows that for $\tau<0$ close to zero, the starting segments of the trajectories $h \mapsto \gamma_h(\tau)$ approximate $\gamma^{RF}_{|(t_1, t^{RF}]}$ in $C^0$. Let $h_{\tau}$ be the value at which $h \mapsto \gamma_h(\tau)$ intersects $I_c$. Note that $\gamma_{h_{\tau}}(\tau)$ converges to $\gamma^{RF}(t^{RF})$ as $\tau \to 0.$ In particular, we may pick $\tau_0>0$ such that $|\varphi(h_{\tau}, \tau) - \varphi^{RF}(t^{RF})| < \frac{\varepsilon_{bar}}{2}$ for $|\tau| < \tau_0$.

Since the intersection point $\gamma_{h_{\tau}}(\tau)$ approaches $\gamma^{RF}(t^{RF})$ as $\tau \to 0$, we see that $H(\gamma_{h_{\tau}}(\tau)) = h_{\tau} \to 1$ as $\tau \to 0.$ In particular, after possibly decreasing $\tau_0>0$, we may assume $h_{\tau} > 1 - \varepsilon$ for all $|\tau| < \tau_0$. \vspace{2mm}

By our choice of $c$, if $\gamma_h(\tau)$ lies within $I_c$ for some $h, \tau$ with $\varphi(h, \tau) > \pbd(h)$, then comparison with $\Psi^\delta$ shows $\varphi(h', \tau) > \pbd(h')$ for all $h'\in [0, h]$. Therefore it remains to show that $\varphi(h_{\tau}, \tau) > \pbd(h_{\tau})$ for $|\tau| < \tau_0$.

We see this as follows: Corollary \ref{TangentDirectionAtCone+} shows that $\lim\limits_{t\to t_2}\varphi^{RF}(t) = \arctan(\frac{9}{4})$ $\text{mod } \pi$ and Proposition \ref{QuadrantRotation} shows that $\lim\limits_{t\to t_2}\varphi^{RF}(t) \ge 0$. Together this implies $\lim\limits_{t\to t_2}\varphi^{RF}(t) \ge \arctan(\frac{9}{4})$. Thus, by the triangle inequality, we obtain that for all $|\tau|<\tau_0$
\begin{align*}
  \varphi(h_{\tau}, \tau) &> \lim\limits_{t\to t_2}\varphi^{RF}(t) - \frac{\varepsilon_{bar}}{2} - \frac{\varepsilon_{bar}}{2} \\
  &\ge  \arctan(\frac{9}{4}) - \varepsilon_{bar} \\
  &> \pbd(h_{\tau}),  
\end{align*}
where we used Lemma \ref{PBDLemma} in the last line and assumed that $\varepsilon < 0.005$, so that $h_{\tau} > 1-\varepsilon > 0.995$.
\end{proof}

\begin{remark}[On the distinctness of the constructed metrics]\,
\normalfont
    With the methods of the proof, we find two $SO(d_1+1)\times SO(d_2+1)$-invariant Einstein metrics on $S^{10}$ for each pair $d_1+d_2=9, d_i\ge 2$. The metrics obtained for $(d_1, d_2)$ are clearly isometric to those obtained for $(d_2, d_1)$, so we may restrict to $d_1 \le d_2$. It is also clear that for any choice of $(d_1, d_2)$ one of the two metrics is the round metric on $S^{10}$. We claim that the non-round metrics $g_{(d_1, d_2)}$ for each pair $(d_1, d_2)$ with $d_1 \le d_2$ are distinct.
    
    Otherwise, we may assume there is an isometry between $g_{(d_1, d_2)}$ and $g_{(\Bar{d}_1, \Bar{d}_2)}$. By pullback, this implies that $g_{(d_1, d_2)}$ is invariant under a connected Lie group $G$ containing subgroups isomorphic to  $SO(d_1+1)\times SO(d_2+1)$ and $SO(\Bar{d}_1+1)\times SO(\Bar{d}_2+1)$ that act with the standard cohomogeneity one structure. The action of $G$ is now homogeneous: Indeed, a $G$-principal orbit contains both $S^{d_1}\times S^{d_2}$ and $S^{\Bar{d}_1}\times S^{\Bar{d}_2}$. Since these are topologically distinct closed $9$-manifolds, the dimension of a $G$-principal orbit must be strictly larger than $9$. Therefore the orbit space $S^{10}/G$ must be a point, i.e., the action of $G$ must be transitive and the metric must be homogeneous. By work of Ziller \cite{ZillerHomogeneousEinsteinMetrics}, the only homogeneous Einstein metric on $S^{10}$ is the round one, giving us the desired contradiction. Thus we see that the metrics $g_{(2,7)}, g_{(3,6)}, g_{(4,5)}$ and $g_{\text{round}}$ are pairwise non-isometric Einstein metrics on $S^{10}$.
\end{remark}

\begin{remark}[Reconstruction of B\"ohm's metrics]\label{boehmrecovery}
\,

\normalfont
    In order to construct the infinite families of Einstein metrics on $S^{n+1}$ for $n+1\in \{ 5, \ldots, 9 \}$ first described by B\"ohm in \cite{BohmInhomEinstein}, one may proceed like this:

    From the linearization at the cone solution in Remark \ref{RemarkLinearizationAlongTheConeSolution}, one finds that the Ricci-flat trajectory $\gamma^{RF}$ spirals into the cone. In particular, $\lim \varphi^{RF}=\infty$. By Remark \ref{MonotoneAngleRemark}, one may then use barrier surfaces $\Psi_\psi(h,r)=(0,0,h)+r(\cos\psi, \sin\psi, 0)$ with any constant $\psi$ to see that $\theta(\gamma^i_0) \ge \psi + \frac{\pi}{2}$ for all $\psi \in \R$ in the sense of definition \ref{derivativecontrolswinding}. Therefore $\gamma^1_0$ and $\conj{\gamma^2_0}$ intersect infinitely often by Lemma \ref{ThetaBoundsIntersections}, giving us infinitely many $SO(d_1+1)\times SO(d_2+1)$-invariant Einstein metrics on $S^{d_1+d_2+1}$ for each pair $(d_1, d_2)$ with $d_1, d_2\ge 2, d_1+d_2\le 8$.
\end{remark}

\begin{remark}[On the degenerate case $d_1=1$]\label{d1Remark}\,

\normalfont
    Throughout this paper, we have assumed $d_1, d_2\ge 2$. For $d_1=1$, one sees that the conservation law $\mathcal{S}_1=0$ can be used to decouple the equations \eqref{EinsteinODEinHYDelta} for $(H, \Delta)$ from the $Y_i$. Analysis of this decoupled system then shows that the round metric is the only Einstein metric in this setup. 
\end{remark}

\begin{remark}[On numerics and spheres of higher dimensions]\,
\label{NumericalSolutions}

\normalfont
    Conceptually, the methods of our proof are not restricted to the case $n=9$, i.e., to finding Einstein metrics on $S^{10}$. However, for spheres of higher dimensions (with the same cohomogeneity one structure), one finds numerically that the required analogue of Proposition \ref{AngleBarrier} does not hold. While this does not rule out the existence of Einstein metrics of the given type, it does mean that these metrics do not come from the linearized behaviour around the cone solution. 
    
    This is supported by other numerical findings: In private communication, C. B\"ohm explained to us that he numerically discovered Einstein metrics on $S^{11}$ for some, but not all, values of $(d_1, d_2).$ In the particular case of $d_1=d_2=5$, Dancer-Hall-Wang \cite{DHWShrinkingSolitons} also performed a numerical study and did not detect any Einstein metrics. Since the linearization around the cone solution depends only on $n=d_1+d_2$, this means that any metrics that may exist on $S^{11}$ cannot be detected from the behaviour of the linearization.\vspace{2mm}
    
    In our own numerical study, we found a non-round Einstein metric on $S^{11}$ for each of the pairs $(d_1, d_2) = (2,8)$ and $(3,7)$, but none for $(4,6)$ and $(5,5)$.
    
    On $S^{10}$, we numerically found exactly one non-round Einstein metric for each pair $(d_1, d_2)$, matching with the metrics of Theorem \ref{EinsteinMetricsOnSpheresMainTheorem}, leading us to conjecture that these are all $SO(d_1) \times SO(d_2)$-invariant Einstein metrics on $S^{10}.$ Further numerical investigations indicated that none of the other cohomogeneity one structures on $S^{10}$ supports a non-round Einstein metric, leading to the conjecture

    \begin{conjecture*}
        The metrics of Theorem \ref{EinsteinMetricsOnSpheresMainTheorem} are all cohomogeneity one Einstein metrics on $S^{10}.$
    \end{conjecture*}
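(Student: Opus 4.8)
The statement is a classification, so the plan splits into a reduction step and a uniqueness step. First I would reduce to a finite, explicit list of cohomogeneity one structures: by the classification of cohomogeneity one actions on the standard sphere (Hsiang--Lawson for linear actions, Straume in general), every cohomogeneity one action on $S^{10}$ is, up to orbit-equivalence, a sum of standard orthogonal representations, and since $11$ is odd and admits no irreducible cohomogeneity-two representation, the relevant groups are $SO(d_1+1)\times SO(d_2+1)$ (or the orbit-equivalent actions of $G_2\times SO(4)$, $\mathrm{Spin}(7)\times SO(3)$, etc., whose invariant metrics coincide with the $SO$-invariant ones since $G_2\subset SO(7)$ and $\mathrm{Spin}(7)\subset SO(8)$ act on their spheres with irreducible isotropy) with $d_1+d_2=9$. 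One must be slightly more careful about actions such as $Sp(1)\times SO(7)$ on $S^3\times S^6$, which admit a genuinely larger space of invariant (non-round on the $S^3$) metrics; here the smoothness condition at the collapsed $S^3$ forces the $S^3$-metric to be round at both singular orbits, and an additional rigidity argument — or a direct analysis of the enlarged ODE system — is needed to exclude interior deformations. The case $(d_1,d_2)=(1,8)$ contributes only the round metric by Remark~\ref{d1Remark}, so the task reduces to proving that for $(d_1,d_2)\in\{(2,7),(3,6),(4,5)\}$ the set $M_1^+\cap\overline{M_2^+}\cap\{H=0\}$ of Lemma~\ref{IntersectionsAreSolutions} has \emph{exactly} two elements.

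The lower bound ``at least two'' is Theorem~\ref{EinsteinMetricsOnSpheresMainTheorem}; the new content is the upper bound, which I would obtain by sharpening Lemma~\ref{ThetaBoundsIntersections} into a two-sided count. The idea is to prove that each of the planar curves $M_1^+\cap\{H=0\}$ and $\overline{M_2^+}\cap\{H=0\}$ is embedded and is a \emph{radial graph} about the cone point $(0,0)$ — that is, in polar coordinates the angle is a strictly monotone parameter and the curve has no radial tangency. For two radial-graph spirals, intersection points are exactly the zeros of the difference of the radius functions over the overlap of their angular ranges, so their number is pinned down once one also knows a two-sided bound $2\pi<\theta(\gamma_0^i)<3\pi$ on the total winding. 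The lower inequality is Proposition~\ref{AngleBarrier}/Lemma~\ref{PBDLemma}. For the upper inequality one constructs a companion family of \emph{upper} barrier surfaces for the angle ODE \eqref{AngleODEeq1} and runs the comparison of Proposition~\ref{AngleBarrier} in reverse, using that the limiting tangent direction of $M_i^+$ at the cone in $\{H=0\}$ is obtained by integrating the linearization along $\cone(h)$ backwards from the $\gamma_i^{\mathrm{RF}}$-direction $(4,9)$ at $h=1$ (Corollary~\ref{TangentDirectionAtCone+}); the monotone, bounded right-hand side of \eqref{AngleODEeq2} for $n=9$ should confine that direction to an angle strictly below $\tfrac{5}{2}\pi$, giving $\theta(\gamma_0^i)<3\pi$.

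The radial-graph and embeddedness properties hold near the cone: in the invariant tube $I_c$ of Remark~\ref{InvariantSets} the dynamics is governed by the linearization along $\cone(h)$ (Proposition~\ref{LinearizedBehaviour}, Corollary~\ref{TangentDirectionAtCone+}), and positivity of the right-hand side of \eqref{AngleODEeq2} forces monotonicity of the angle and forbids radial tangencies. The difficulty is the \emph{far field}: the compact portion of $M_i^+\cap\{H=0\}$ from its single boundary point on $\partial\mathcal{S}$ — which lies in the quadrant of $p_i^+$ by Propositions~\ref{QuadrantRotation} and~\ref{FixedPointsInHDeltaZ} — to the boundary of $I_c$. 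On this piece one must exclude radial tangencies and extra loops about the cone point. I would attack this in two ways: (a) use the monotone volume functional $\mathcal{F}$ of Remark~\ref{VolumeMonotonicZDeltaH}, which is nondecreasing for $H\ge 0$ and strictly increasing off the cone, to foliate $\mathcal{S}\cap\{H=0\}$ by regions $\{\mathcal{F}\ge c\}$ that the flow enters monotonically, thereby bounding how $M_i^+$ can wind; and (b) a direct quantitative study of the planar vector field obtained by restricting \eqref{EinsteinODEinZDeltaH} to $\{H=0\}$, which in the worst case would be carried out by rigorous interval arithmetic, making precise the numerical evidence reported in Remark~\ref{NumericalSolutions}.

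The main obstacle — and the reason this is stated as a conjecture rather than a theorem — is exactly step (a)--(b): controlling the global geometry of $M_i^+\cap\{H=0\}$ away from the cone, i.e., proving the radial-graph property on the far field together with the sharp upper winding bound $\theta(\gamma_0^i)<3\pi$. Near the cone all of this follows from the linearization and the explicit angle ODE, where the comparison techniques of Section~\ref{SectionODEestimates} apply and in fact deliver both one-sided bounds; but the passage to the far region is governed by the full nonlinear system \eqref{EinsteinODEinZDeltaH}, for which no monotone quantity is currently known to be sharp enough to preclude spurious windings or tangencies. A secondary, more routine point is to verify that the cohomogeneity one classification of $S^{10}$ genuinely contains nothing beyond the $SO(d_1+1)\times SO(d_2+1)$ family and its orbit-equivalent variants, and in particular to settle the $Sp(1)\times SO(7)$-type enlargement of the metric ansatz.
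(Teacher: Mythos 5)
There is nothing in the paper to compare your argument against: the statement you were given is not a theorem of the paper but an explicit conjecture, stated in Remark \ref{NumericalSolutions} and in the introduction, and the only support the authors offer is numerical (their own experiments plus B\"ohm's and Dancer--Hall--Wang's in related cases). So the question is whether your proposal actually closes the problem, and it does not; you say so yourself, and I agree with your diagnosis. The essential missing content is the upper bound: a proof that for each pair $(d_1,d_2)\in\{(2,7),(3,6),(4,5)\}$ the set $M_1^+\cap\overline{M_2^+}\cap\{H=0\}$ has \emph{exactly} two points. Your proposed route --- embeddedness plus a radial-graph property of the slices $M_i^{\pm}\cap\{H=0\}$ about the cone point, together with a two-sided winding bound $2\pi<\theta(\gamma_0^i)<3\pi$ --- is plausible, but the tools actually available in the paper (the angle ODE \eqref{AngleODEeq1}, the barriers of Proposition \ref{AngleBarrier} and Lemma \ref{PBDLemma}, the monotone quantity $\mathcal{F}$ of Remark \ref{VolumeMonotonicZDeltaH}) only control the dynamics near the cone solution and for $H\ge 0$; the far-field portion of $M_i^+\cap\{H=0\}$, from its boundary point on $\partial\mathcal{S}$ to the tube $I_c$, is governed by the full nonlinear system \eqref{EinsteinODEinZDeltaH}, and neither Proposition \ref{QuadrantRotation} (which only localizes quadrant transitions) nor $\mathcal{F}$-monotonicity rules out extra windings or radial tangencies there. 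Also note a structural point: $M_i^+\cap\{H=0\}$ is not the trace of a single trajectory of a planar vector field (the slice $\{H=0\}$ is transverse to the flow, and each point of the slice lies on a different trajectory), so your suggested ``direct quantitative study of the planar vector field obtained by restricting \eqref{EinsteinODEinZDeltaH} to $\{H=0\}$'' does not literally make sense; any rigorous-numerics approach would have to certify the two-dimensional invariant manifolds $M_1^+$ and $M_2^-$ globally between $H=1$ and $H=0$, which is a substantially harder object than a planar curve.

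The second gap is the reduction step. The conjecture as stated quantifies over \emph{all} cohomogeneity one Einstein metrics on $S^{10}$, not just the $SO(d_1+1)\times SO(d_2+1)$-invariant ones analyzed in the paper. Your appeal to the classification of cohomogeneity one actions on spheres is the right starting point, but it does not by itself reduce the problem to the paper's two-function ansatz: orbit-equivalent subactions such as $SU(2)\times SO(7)$ or $Sp(2)\times SO(6)$ have strictly larger spaces of invariant metrics (Berger-type deformations on the collapsing sphere factors), so the Einstein condition becomes a larger ODE system in which the paper's $(Z,\Delta,H)$ reduction is not available, and your remark that smoothness at the singular orbits ``forces'' roundness only constrains the endpoints, not the interior of the trajectory. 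Excluding non-round invariant Einstein metrics for these enlarged ansätze is an open problem in its own right, not a routine verification. In short: your proposal is an honest research program that correctly identifies why the statement is a conjecture, but both of its pillars (the sharp intersection count and the exclusion of the other cohomogeneity one structures) remain unproven, so it is not a proof --- consistent with the fact that the paper itself offers none.
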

    
    Furthermore, we find a nontrivial Einstein metric on $S^{d_1+1} \times S^{d_2}$ only for the pair $(d_1,d_2)=(2,7).$ \vspace{2mm}

    Figures \ref{45Plots} - \ref{27Plots} provide plots of the $\{ H = 0 \}$-slices for the different $(d_1,d_2)$-systems for $n+1=d_1+d_2+1=10.$ 
 
    Intersections of $M_1^+$ with $M_2^-$ correspond to Einstein metrics on $S^{10}.$ The outer intersection point represents the round sphere. Any other intersection point corresponds to a non-trivial Einstein metric. By symmetry, we find corresponding intersection points of $M_2^+$ with $M_1^-$, which give rise to isometric metrics. 

    Intersections of $M_i^+$ with $M_i^-$ correspond to Einstein metrics on $S^{d_i+1} \times S^{d_j}.$ We always see two of these intersections, corresponding to the products of round spheres. Apart from these, only the case $(2,7)$ appears to have a new example, which stems from the fact that in this case $M_1^+$ and $M_2^+$ make an angle of $0$ instead of $\pi$ at the cone point due to the behaviour of the Ricci-flat subsystem.
\end{remark}

\setcounter{figure}{3}
\begin{figure}[h]
    \centering
    \begin{overpic}[width=0.45\textwidth]{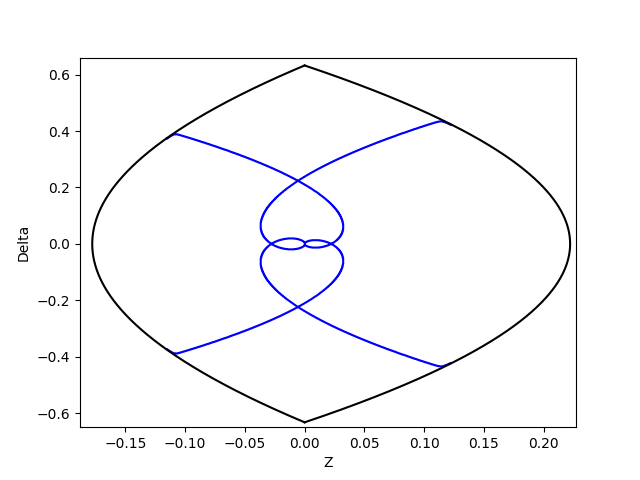}
\put(690,570){$\scriptstyle M_1^+$}
\put(190,560){$\scriptstyle M_2^-$}
\put(190,150){$\scriptstyle M_2^+$}
\put(690,140){$\scriptstyle M_1^-$}
\end{overpic}
\begin{overpic}[width=0.45\textwidth]{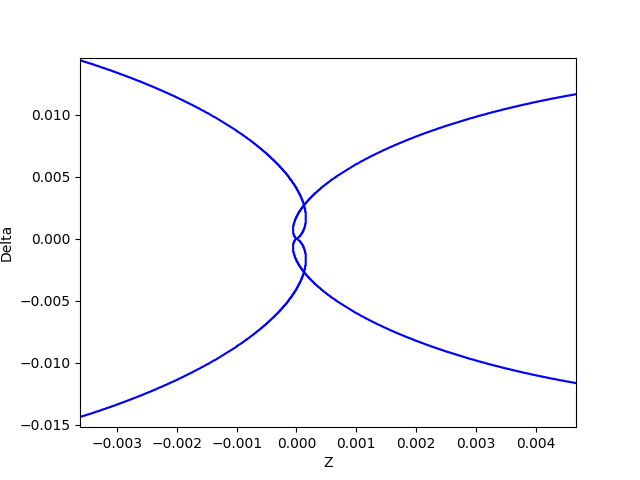}
\put(800,540){$\scriptstyle M_2^+$}
\put(135,580){$\scriptstyle M_1^-$}
\put(135,150){$\scriptstyle M_1^+$}
\put(800,185){$\scriptstyle M_2^-$}
\end{overpic}
    \caption{The $\{ H=0 \}$-slice of the $(4,5)$-system.}
    \label{45Plots}
\end{figure}
\begin{figure}[h]
    \centering
    \begin{overpic}[width=0.45\textwidth]{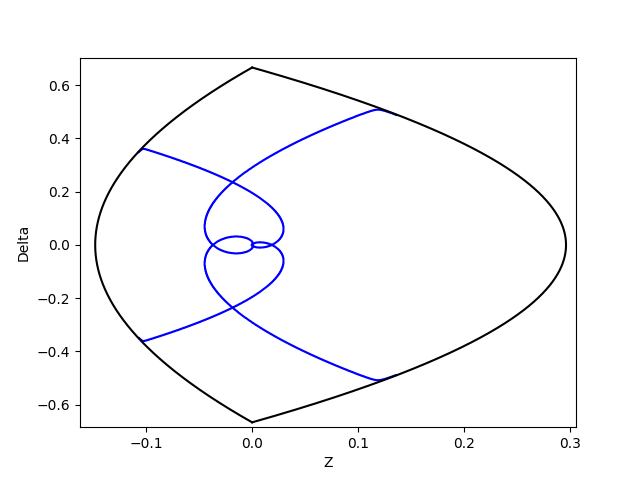}
\put(575,595){$\scriptstyle M_1^+$}
\put(135,525){$\scriptstyle M_2^-$}
\put(135,175){$\scriptstyle M_2^+$}
\put(575,110){$\scriptstyle M_1^-$}
\end{overpic}
\begin{overpic}[width=0.45\textwidth]{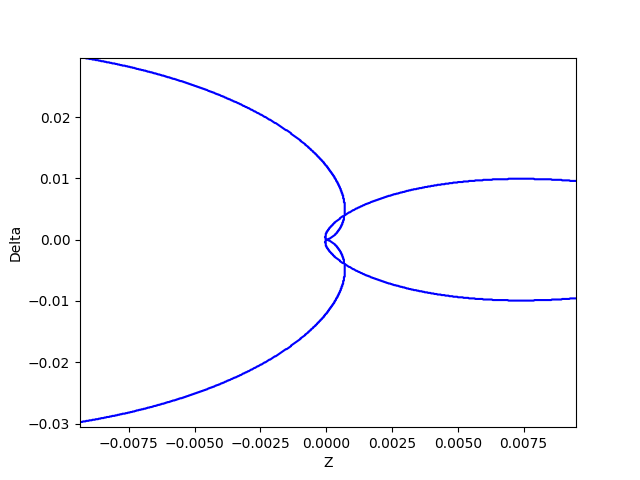}
\put(805,495){$\scriptstyle M_2^+$}
\put(130,600){$\scriptstyle M_1^-$}
\put(130,130){$\scriptstyle M_1^+$}
\put(805,235){$\scriptstyle M_2^-$}
\end{overpic}
    \caption{The $\{ H=0 \}$-slice of the $(3,6)$-system.}
    \label{36Plots}
\end{figure}
\begin{figure}[h]    
    \centering
    \begin{overpic}[width=0.45\textwidth]{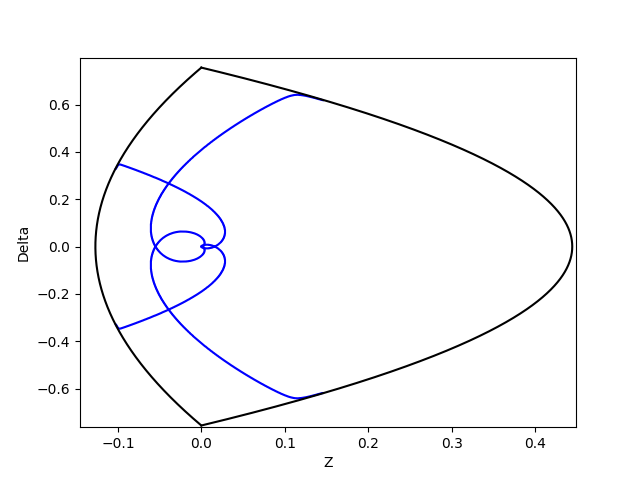}
\put(410,540){$\scriptstyle M_1^+$}
\put(128,545){$\scriptstyle M_2^-$}
\put(128,145){$\scriptstyle M_2^+$}
\put(415,155){$\scriptstyle M_1^-$}
\end{overpic}
\begin{overpic}[width=0.45\textwidth]{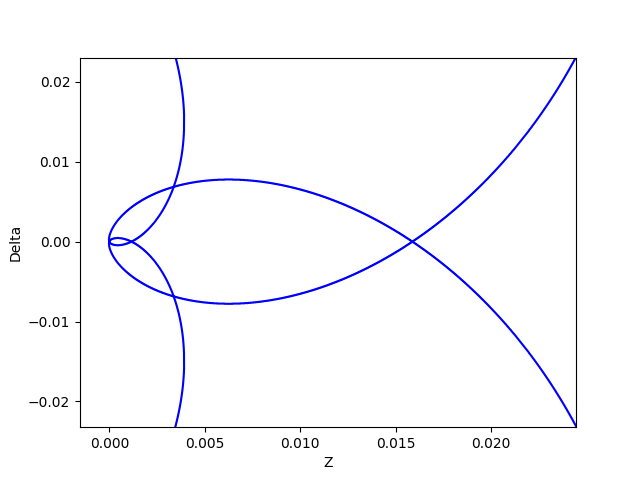}
\put(778,615){$\scriptstyle M_2^-$}
\put(180,615){$\scriptstyle M_1^-$}
\put(187,105){$\scriptstyle M_1^+$}
\put(775,105){$\scriptstyle M_2^+$}
\end{overpic}
    \caption{The $\{ H=0 \}$-slice of the $(2,7)$-system.}
    \label{27Plots}
\end{figure}

\FloatBarrier

\end{document}